\pgfplotsset{compat=1.18}
\theoremstyle{plain}
\newtheorem{theorem}{Theorem}
\newtheorem{lemma}[theorem]{Lemma}
\newtheorem{corollary}[theorem]{Corollary}
\newtheorem{question}[theorem]{Question}
\newtheorem{problem}[theorem]{Problem}
\newtheorem{conjecture}[theorem]{Conjecture}
\theoremstyle{definition}
\newtheorem{definition}[theorem]{Definition}
\theoremstyle{remark}
\newtheorem{remark}[theorem]{Remark}
\newtheorem{example}[theorem]{Example}
\numberwithin{equation}{section}
\numberwithin{theorem}{section}
\newcommand{\br}{\overline}
\newcommand{\R}{\mathbb R}
\newcommand{\C}{\mathbb C}
\newcommand{\D}{\mathbb D}
\newcommand{\N}{\mathbb N}
\DeclareMathOperator{\dist}{{\mathrm{dist}}}
\DeclareMathOperator{\diam}{{\mathrm{diam}}}
\DeclareMathOperator{\inter}{{\mathrm{int}}}
\DeclareMathOperator{\Mod}{Mod}
\DeclareMathOperator{\loc}{\mathrm{loc}}
\DeclareMathOperator{\llc}{\mathit{LLC}}
\title{Uniformization of metric surfaces: A survey}
\author{Dimitrios Ntalampekos}
\address{Department of Mathematics, Aristotle University of Thessaloniki, Thessaloniki, 54152, Greece.}
\email{dntalam@math.auth.gr}
\date{\today}
\keywords{Uniformization, Riemann surface, metric surface, quasisphere, quasisymmetric, conformal, quasiconformal, weakly quasiconformal, modulus, reciprocal}
\subjclass[2020]{Primary 30C62, 30C65, 30F10, 30L10, 53A05; Secondary 28A75, 51F99, 53C23, 53C45}
\dedicatory{Dedicated to Mario Bonk.}
\begin{document}

	\begin{abstract}
	In this survey we present the most recent developments in the uniformization of metric surfaces, i.e., metric spaces homeomorphic to two-dimensional topological manifolds. We start from the classical conformal uniformization theorem of Koebe and Poincar\'e. Then we discuss the Bonk--Kleiner theorem on the quasisymmetric uniformization of metric spheres, which marks the beginning of the study of the uniformization problem on fractal surfaces. The next result presented is Rajala's theorem on the quasiconformal uniformization of metric spheres. We conclude with the final result in this series of works, due to Romney and the author, on the weakly quasiconformal uniformization of arbitrary metric surfaces of locally finite area under no further assumption.
	\end{abstract}

\maketitle

\setcounter{tocdepth}{1}
\tableofcontents

\section{Introduction}

\subsection{Motivation}
The object of this survey is to present the most recent developments in the uniformization of surfaces, starting from the classical conformal uniformization theorem of Koebe and Poincar\'e and concluding with the uniformization of arbitrary non-smooth metric surfaces of locally finite area. 

One way to understand the geometry of a fractal metric space is to transform it, as if it were made out of rubber, to a standard, well-understood smooth space, such as a circle or a sphere. The transformation should be well-behaved, in the sense that it should preserve the geometry. Examples of such transformations are conformal (angle-preserving), quasiconformal (angle-quasipreserving), quasisymmetric (shape-quasipreserving), and bi-Lipschitz (shape-and-scale-quasipreserving) maps and depending on the setting we choose the appropriate type of map. For example, for surfaces of finite area all mentioned types of maps are appropriate, but for surfaces of very fractal nature only quasisymmetric maps seem to be applicable. Having such transformations allows one to study a metric space using classical tools of Euclidean space, such as first-order calculus and potential theory. Whether the transformation of a metric space to a smooth or standard space is possible is the object of the problem of \textit{uniformization}. 

The one-dimensional version of the uniformization problem is very well understood. For example, every curve of finite length admits a Lipschitz parametrization by an interval. As for fractal curves, Ahlfors \cite{Ahlfors:reflections} characterized those Jordan curves in the plane that admit a quasisymmetric parametrization by the unit circle, while Tukia and V\"ais\"al\"a \cite{TukiaVaisala:qs_embeddings} generalized this characterization to arbitrary metric spaces $X$ homeomorphic to the circle. Namely, $X$ admits a quasisymmetric parametrization by the circle if and only if $X$ has bounded turning and is doubling; see Section \ref{section:prelim} for the definitions of these notions.

On the other hand, the uniformization of two-dimensional metric spaces is much more challenging. In this survey we present a series of major results proved during the last three decades on the uniformization of non-smooth metric surfaces, i.e., two-dimensional manifolds with a metric (not necessarily arising from a Riemannian metric) inducing the topology. The presentation is aimed at non-experts and contains all required background, some examples, and proof outlines. Specifically, we will present results that answer the following question.

\begin{question}\label{question:minimal}
What are minimal conditions on a metric surface so that it can be parametrized by a smooth surface with a well-behaved map?
\end{question}

The study of the uniformization problem on metric surfaces is implicitly motivated by work of Thurston. The hyperbolization conjecture of Thurston for $3$-dimensional manifolds asserts that every closed, aspherical, irreducible, atoroidal $3$-manifold $M$ admits a Riemannian metric of constant curvature $-1$. A related conjecture of Cannon \cite{Cannon:Conjecture} asserts that if $G$ is a Gromov hyperbolic group whose boundary at infinity is a topological $2$-sphere, then $G$ admits a discrete, cocompact, isometric action on hyperbolic $3$-space. Cannon's conjecture implies Thurston's conjecture when the fundamental group of $M$ is Gromov hyperbolic. Although Thurston's conjecture has been resolved affirmatively by Perelman, Cannon's conjecture remains open. By results of Sullivan \cite{Sullivan:Cannon} and Tukia \cite{Tukia:qc_groups}, Cannon's conjecture is equivalent to the following conjecture.

\begin{conjecture}
If $G$ is a Gromov hyperbolic group whose boundary at infinity $\partial_\infty G$ is homeomorphic to the $2$-sphere $\mathbb S^2$, then $\partial_\infty G$ is quasisymmetrically equivalent to $\mathbb S^2$.
\end{conjecture}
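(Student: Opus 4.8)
The plan is to route the statement through the Bonk--Kleiner quasisymmetric uniformization theorem, which characterizes the metric spaces homeomorphic to $\mathbb{S}^2$ that are quasisymmetrically equivalent to it as precisely those that are Ahlfors $2$-regular and linearly locally connected. Fix a visual metric $d$ on $X := \partial_\infty G$; for a Gromov hyperbolic group such a metric exists, and any two visual metrics on $X$ are quasisymmetrically equivalent, so the conclusion depends only on the quasisymmetry class of $(X,d)$. It therefore suffices to exhibit, inside this quasisymmetry class, a single metric that is simultaneously Ahlfors $2$-regular and linearly locally connected.

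The linear local connectivity should come cheaply. Since $\partial_\infty G$ is a topological $2$-sphere, $G$ is one-ended, so by results of Bestvina--Mess, Bowditch, and Swarup the boundary is connected, locally connected, and has no local cut points; combined with the approximate self-similarity of $\partial_\infty G$ under the cocompact $G$-action, this yields the two-sided $\llc$ condition for every visual metric, and hence --- $\llc$ being a quasisymmetry invariant --- for every metric in the quasisymmetry class. All of the difficulty is thus concentrated in the metric regularity: one must produce an Ahlfors $2$-regular metric in the quasisymmetry class of $(X,d)$; equivalently, one must show that the \emph{Ahlfors regular conformal dimension} of $X$ --- a quasisymmetry invariant that is automatically at least $2$ since $X$ is a topological surface --- equals $2$ and is \emph{attained}. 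By the Bonk--Kleiner theorem on hyperbolic groups with $2$-sphere boundary it is in fact enough to know that this infimum is attained at some value, so the whole problem reduces to proving attainment of the Ahlfors regular conformal dimension of $X$.

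To attack attainment I would pass to a combinatorial model of $X$: for each $n$, the graph whose vertices are the pieces of a visual covering of $X$ at scale $2^{-n}$, on which one computes combinatorial moduli of curve families (this is essentially Cannon's combinatorial Riemann mapping viewpoint). By the theory of Keith--Laakso, with the refinements of Carrasco Piaggio and of Bourdon--Kleiner, attainment of the Ahlfors regular conformal dimension is essentially equivalent to a scale-invariant lower bound on these combinatorial moduli --- in its strongest form, the \emph{combinatorial Loewner property} for $X$. One would then try to realize the extremal metric as a normalized Gromov--Hausdorff limit of discrete metrics built from near-extremal weights for the combinatorial $2$-modulus at each scale, using the quasi-self-similarity forced by the $G$-action to keep the limiting pieces from collapsing and to ensure that the limit is still a topological $2$-sphere in the original quasisymmetry class.

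The real obstacle is the last link in this chain: proving the combinatorial Loewner property for $X$ and upgrading it to the \emph{existence} of an Ahlfors $2$-regular, $\llc$ metric in the quasisymmetry class. It is not excluded a priori that $\mathrm{confdim}_{AR}(X) = 2$ while no metric attains this value, and it is exactly this pathology that must be ruled out. This ``combinatorial-to-analytic'' passage is, as of this writing, open; partial results are known, for instance when $G$ carries enough quasiconvex surface subgroups (Markovi\'c) or under extra dynamical hypotheses (Ha\"issinsky, Bonk--Kleiner), but the general case is not, and it is the reason this conjecture --- and with it Cannon's conjecture --- remains unresolved. Everything preceding it is, by comparison, bookkeeping built on the Bonk--Kleiner uniformization theorem.
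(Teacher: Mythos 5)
The statement you were asked to prove is not a theorem of the paper: it is the Sullivan--Tukia reformulation of Cannon's conjecture, and the paper states explicitly that it remains open. There is consequently no proof in the paper to compare against, and your submission---rightly---does not contain one either. What you have written is an accurate account of the standard reduction: the conclusion depends only on the quasisymmetry class of a visual metric; the $\llc$ condition follows from one-endedness, local connectedness, the absence of local cut points, and the approximate self-similarity coming from the cocompact action; and the Bonk--Kleiner theorem then reduces everything to producing an Ahlfors $2$-regular metric in the quasisymmetry class, i.e., to the attainment of the Ahlfors regular conformal dimension. This is precisely the reduction in Bonk--Kleiner's follow-up work on hyperbolic groups with $2$-sphere boundary, and the combinatorial modulus and combinatorial Loewner machinery you cite is indeed the main known line of attack on attainment.

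The genuine gap is the one you name yourself, and it is the entire content of the conjecture: nothing in your argument rules out that the conformal dimension equals $2$ without being attained, nothing establishes the combinatorial Loewner property for an arbitrary such boundary, and the proposed Gromov--Hausdorff limit of near-extremal discrete metrics is a heuristic rather than a construction---you give no argument that the limit exists, is a topological $2$-sphere, is Ahlfors $2$-regular, or lies in the original quasisymmetry class. So this is a research program, not a proof, and it should not be presented as one. The correct answer to the exercise is that the statement is an open conjecture equivalent to Cannon's conjecture; your text is a sensible survey of why it is hard, but it establishes nothing beyond the known reduction to attainment of the conformal dimension.
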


Therefore, a deep open problem in geometric group theory is closely tied with the problem of quasisymmetric uniformization of metric spheres. 
We direct the reader to the ICM surveys of Bonk \cite{Bonk:icm} and Kleiner \cite{Kleiner:icm} for further background on the conjecture. 

The problem of uniformization of metric spheres arises as well in the context of dynamical systems. Even in that context, the problem is motivated again by work of Thurston. Bonk and Meyer \cite{BonkMeyer:Thurston} studied in depth expanding Thurston maps on the $2$-sphere. These maps give rise to a type of fractal geometry on the sphere, represented by an associated visual metric. One of the main results of the theory, which also appeared in the work of Ha\"issinsky--Pilgrim \cite{HaissinskyPilgrim:coarse}, is that an expanding Thurston map is topologically conjugate to a rational map on the Riemann sphere if and only if the associated visual metric is quasisymmetrically equivalent to the Euclidean metric on $\mathbb S^2$.

\subsection{Organization}
We start in Section \ref{section:classical} with the classical uniformization theorem for Riemann surfaces. In Section \ref{section:prelim} we discuss the required preliminaries from the area of analysis on metric spaces. 

Section \ref{section:qs} contains an exposition of the quasisymmetric uniformization problem and is focused mainly on the Bonk--Kleiner theorem \cite{BonkKleiner:quasisphere}. This is the first modern uniformization result for non-smooth surfaces and ignited a whole line of research that is presented in this survey. We also include a brief outline of the proof of the theorem, which, quite remarkably, relies on the Koebe--Andreev--Thurston circle packing theorem. In the same section we present an improvement of the Bonk--Kleiner theorem that is due to Lytchak and Wenger \cite{LytchakWenger:parametrizations} and provides quasisymmetric parametrizations that are, in addition, canonical, i.e., unique in a sense. 

Section \ref{section:qc} is devoted to the quasiconformal uniformization theorem of Rajala \cite{Rajala:uniformization}. We include a proof outline, motivating examples, and a detailed discussion on the relation between quasiconformal and quasisymmetric maps. In particular, we explain how Rajala's theorem implies the Bonk--Kleiner theorem.

In the final part of the survey, Section \ref{section:wqc}, we present a theorem of the author and Romney \cite{NtalampekosRomney:nonlength} on the weakly quasiconformal uniformization of arbitrary metric surfaces of locally finite area, under no other geometric assumption. A predecessor of this result was obtained by Meier and Wenger \cite{MeierWenger:uniformization} for locally geodesic surfaces. These can be regarded as the ultimate results in the above series of works and they imply the two mentioned uniformization theorems. We include a proof outline of the main theorems of \cite{NtalampekosRomney:nonlength} and we discuss several properties of weakly quasiconformal maps.

\subsection*{Acknowledgments}
The author would like to thank Athanase Papadopoulos for the invitation to write this survey, as well as Toni Ikonen, Damaris Meier, Kai Rajala, and Matthew Romney for their comments and corrections.

\section{Classical conformal uniformization}\label{section:classical}

\subsection{The uniformization theorem}
A Riemann surface is a $2$-dimensional topological manifold $X$ (possibly with non-empty boundary) with a complex structure, that is, an atlas whose transition maps are conformal. We note that Riemann surfaces are orientable. A homeomorphism between Riemann surfaces is conformal if it is complex differentiable in local coordinates. The classical Riemann mapping theorem asserts that every simply connected domain in the complex plane $\C$, other than $\C$ itself, admits a conformal parametrization by the unit disk $\D$. The generalization of this theorem to arbitrary Riemann surfaces, as conjectured by Klein in 1882, is now known as the classical uniformization theorem and is due to Koebe and Poincar\'e \cites{Koebe:uniformization1, Koebe:uniformization2, Koebe:uniformization3, Poincare:uniformization}. We direct the reader to \cite{SaintGervais:uniformization} for the history of the uniformization theorem and to
\cite{Marshall:complex}*{Theorem 15.12} for a proof.

\begin{theorem}[Classical uniformization]\label{theorem:uniformization_classical}
Let $X$ be a simply connected Riemann surface without boundary. If $X$ is compact, then there exists a conformal homeomorphism from $\widehat{\C}$ onto $X$ and if $X$ is not compact, then there exists a conformal homeomorphism from either $\D$ or $\C$ onto $X$. 
\end{theorem}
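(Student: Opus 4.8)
The plan is to realize $X$ conformally as one of $\widehat{\C}$, $\C$, $\D$ by constructing on it a harmonic function with a single prescribed isolated singularity, passing from it to a globally single-valued holomorphic map onto the appropriate model surface, and finally checking that this map is a conformal homeomorphism. The passage from a harmonic to a holomorphic function uses essentially that $X$ is simply connected, so that the periods obstructing the existence of a holomorphic primitive either vanish outright or are multiples of $2\pi$, hence harmless after exponentiation. The harmonic functions themselves will be produced by Perron's method applied to an exhaustion of $X$ by relatively compact subsurfaces with smooth boundary: one solves the relevant Dirichlet problems using barriers and passes to the limit by Harnack's principle. (We use throughout Radó's theorem that a Riemann surface is automatically second countable, so that the classification of surfaces applies and a simply connected surface is homeomorphic to $\mathbb S^2$ when compact and to $\R^2$ otherwise.)

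I would treat the compact case first. Here $X\setminus\{p\}$ is homeomorphic to $\R^2$, in particular simply connected, for every $p\in X$. Fix such a $p$ and a conformal coordinate $z$ centered at it, and construct a harmonic function $u$ on $X\setminus\{p\}$ with $u-\re(1/z)$ bounded --- hence harmonic --- near $p$. The conjugate differential of $u$ has vanishing period around $p$, since the singularity $\re(1/z)$ is of dipole type, and all other periods vanish by simple connectivity; thus $u$ has a global harmonic conjugate $u^{*}$, and $f=u+iu^{*}$ extends to a meromorphic function on $X$ whose only pole is a simple pole at $p$. A meromorphic function on a compact Riemann surface with exactly one simple pole has degree one, so $f\colon X\to\widehat{\C}$ is a conformal homeomorphism.

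For noncompact $X$, now homeomorphic to $\R^2$, I would distinguish two cases according to whether $X$ carries a nonconstant negative subharmonic function, equivalently a nonconstant positive superharmonic function. In the \emph{hyperbolic} case, where such a function exists, a Perron argument over the exhaustion yields the Green's function $g=g_{p}$, the minimal positive harmonic function on $X\setminus\{p\}$ with $g+\log|z|$ harmonic near $p$. The conjugate differential of $g$ has period $\pm 2\pi$ around $p$ and no other independent periods, so $f=\exp(-(g+ig^{*}))$ is a well-defined holomorphic map $X\to\D$ with a simple zero at $p$; using the minimality of $g$ and the maximum principle --- essentially Koebe's original univalence argument --- one shows that $f$ is injective with image all of $\D$, hence a conformal homeomorphism onto $\D$. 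In the \emph{parabolic} case, where no such function exists, the same exhaustion scheme produces instead a harmonic function $u$ on $X\setminus\{p\}$ with a dipole singularity $u-\re(1/z)$ bounded near $p$, normalized along the exhaustion so that it does not degenerate; its global conjugate makes $f=u+iu^{*}$ meromorphic on $X$ with a single simple pole, and the absence of a Green's function on $X$ is precisely what forces the image of $f$ to be all of $\C$, so that $f\colon X\to\C$ is a conformal homeomorphism.

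The main obstacle is the analytic construction of the potentials $u$ and $g$: running Perron's method on a noncompact Riemann surface requires a careful choice of subharmonic barriers along the exhaustion and a compactness argument to extract a nondegenerate limit that still carries the prescribed singularity, and in the parabolic case one must actively prevent the limit from collapsing to a constant --- this is where the normalization and the non-existence of nonconstant positive superharmonic functions enter. Of comparable difficulty is showing that the resulting map is injective and onto the claimed model, which combines the argument principle on the exhausting subdomains with the maximum principle and the monodromy theorem on the simply connected surface $X$. Finally, the three possible conclusions correspond exactly to the two successive dichotomies ``compact or not'' and, within the noncompact case, ``a nonconstant negative subharmonic function exists or not'', the latter being what separates the disk from the plane.
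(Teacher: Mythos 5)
The paper does not prove this theorem: it is a survey, and the statement is recorded as the classical Koebe--Poincar\'e result with a pointer to \cite{Marshall:complex}*{Theorem 15.12} for a proof. Your outline is essentially the standard potential-theoretic route taken in that reference and in the classical literature: Perron's method over an exhaustion to produce either the Green's function (hyperbolic case) or a harmonic function with a dipole singularity $\re(1/z)$ (compact and parabolic cases), the vanishing or $2\pi\Z$-valued periods of the conjugate differential on the simply connected surface to obtain a single-valued map $u+iu^*$ or $\exp(-(g+ig^*))$, and then degree-one/Koebe-type arguments for bijectivity onto $\widehat\C$, $\D$, or $\C$. The outline is correct, and you have accurately located where the real work lies (existence and nondegeneracy of the potentials, injectivity and identification of the image); as written it is of course a plan rather than a proof, but it is the right plan and consistent with the source the survey cites.
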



Note that a Riemann surface does not carry a natural metric. Suppose that a Riemann surface $X$ is equipped with a Riemannian metric $g$. We say that $g$ is \textit{compatible} with the conformal structure of $X$ if for each local conformal parametrization $\varphi$ from an open set $U$ of $\C$ onto an open subset of $X$ the coordinate representation of $g$ is of the form $\varphi^*g=\alpha |dz|^2$ for some smooth function $\alpha$ on $U$. 

Geometrically one may think of a Riemannian metric $g$ on a Riemann surface $X$ as an assignment of an ellipse to each point of the surface $X$ via local coordinates. With this point of view, $g$ is compatible with the complex structure if and only if $g$ assigns an actual circle to each point rather than an ellipse. In Section \ref{section:complex_structures} we give another interpretation using the notion of quasiconformal maps. An implication of the classical uniformization theorem is the next statement.


\begin{theorem}[Classical Riemannian uniformization]\label{theorem:uniformization_riemannian}
Let $X$ be a Riemann surface. Then there exists a Riemannian metric $g$ on $X$ that is complete, has constant curvature, and is compatible with the complex structure of $X$. 
\end{theorem}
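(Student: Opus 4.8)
The plan is to lift the problem to the universal cover, where Theorem~\ref{theorem:uniformization_classical} applies, and then push the resulting canonical metric back down to $X$. Assume for simplicity that $X$ has no boundary, as in the hypothesis of Theorem~\ref{theorem:uniformization_classical}. Let $\pi\colon\widetilde X\to X$ be a universal covering; then $\widetilde X$ is a simply connected Riemann surface without boundary, the group $\Gamma$ of deck transformations acts on $\widetilde X$ freely and properly discontinuously by conformal automorphisms, and $X$ is conformally equivalent to $\widetilde X/\Gamma$. By Theorem~\ref{theorem:uniformization_classical} we may identify $\widetilde X$ conformally with one of the model surfaces $\widehat{\C}$, $\C$, or $\D$; under this identification $\Gamma$ becomes a group of conformal automorphisms of the model $M$ acting freely and properly discontinuously, with $X\cong M/\Gamma$. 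Equip $M$ with its standard metric $g_M$: the spherical metric on $\widehat{\C}$, the Euclidean metric $|dz|^2$ on $\C$, or the Poincar\'e metric $4|dz|^2/(1-|z|^2)^2$ on $\D$, of constant curvature $+1$, $0$, and $-1$ respectively. Each of these metrics is complete and has the form $\alpha\,|dz|^2$ in the standard coordinate, hence is compatible with the complex structure of $M$.

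The crux of the argument is that $g_M$ is invariant under the action of $\Gamma$, so that it descends to a metric on the quotient. This is where the rigidity of the automorphism groups of the three models is used. If $M=\D$, every conformal automorphism of $\D$ is an isometry of the Poincar\'e metric, so there is nothing to check. If $M=\C$, every conformal automorphism has the form $z\mapsto az+b$; a non-identity element of $\Gamma$ acts without fixed points, which forces $a=1$, so $\Gamma$ consists of Euclidean translations, and these are isometries of $|dz|^2$. If $M=\widehat{\C}$, every conformal automorphism is a M\"obius transformation and hence fixes at least one point of $\widehat{\C}$; since $\Gamma$ acts freely it must be trivial, so $X\cong\widehat{\C}$ and $g_M$ is the spherical metric itself. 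In all three cases $\Gamma$ acts by isometries of $g_M$.

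Consequently $g_M$ descends to a Riemannian metric $g$ on $X\cong M/\Gamma$ for which the covering $\pi$ is a local isometry. It remains to verify the three asserted properties. Compatibility with the complex structure and constant curvature are local conditions: every point of $X$ has a neighborhood on which $\pi$ restricts to a biholomorphism pulling $g$ back to $g_M$, so $g$ inherits both properties from $g_M$. For completeness, recall that a Riemannian covering with complete total space has complete base: any unit-speed geodesic of $(X,g)$ defined on a maximal interval $[0,a)$ lifts through $\pi$ to a geodesic of $(M,g_M)$, which extends to $[0,\infty)$ by completeness of $g_M$, and projecting this extension back down shows that $a=\infty$. Hence $(X,g)$ is geodesically complete, and therefore complete by Hopf--Rinow, which finishes the proof.

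I expect the only point requiring genuine care to be the $\Gamma$-invariance of the model metric: the case $M=\D$ is automatic because biholomorphisms of the disk are exactly its orientation-preserving hyperbolic isometries, while the cases $M=\C$ and $M=\widehat{\C}$ rely on the fact that deck transformations act without fixed points. Riemann surfaces with non-empty boundary are not covered by Theorem~\ref{theorem:uniformization_classical} and would require a separate argument (for instance, via the Riemann mapping theorem applied to simply connected subdomains, or by passing to the interior and the double); I will not pursue this here.
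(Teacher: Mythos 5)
Your argument is correct and is precisely the standard derivation the survey has in mind: the paper offers no proof, stating the result only as ``an implication of the classical uniformization theorem,'' and your route---pass to the universal cover, apply Theorem~\ref{theorem:uniformization_classical}, check that the deck group acts by isometries of the model metric in each of the three cases, and descend---is that implication spelled out, with the completeness and curvature checks handled correctly. Your closing caveat about non-empty boundary is fair, since Theorem~\ref{theorem:uniformization_classical} as stated covers only the boundaryless case, and the survey itself glosses over this point.
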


Conversely, Gauss proved that every orientable Riemannian surface admits a complex structure that is compatible with the Riemannian metric. This complex structure is also referred to as \textit{isothermal coordinates}.

\begin{theorem}[Existence of isothermal coordinates]\label{theorem:isothermal}
Let $(X,g)$ be {an orientable} Riemannian surface. Then there exists a complex structure on $X$ that is compatible with the Riemannian metric $g$.
\end{theorem}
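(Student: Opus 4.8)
The statement is local in nature, so the plan is to show that every point of $X$ has a neighborhood carrying \emph{isothermal coordinates}, that is, a smooth chart in which $g=\lambda\,(du^2+dv^2)$ for some positive smooth function $\lambda$. Granting this, the collection of all isothermal charts is an atlas, and whenever $z$ and $w$ are overlapping isothermal coordinates with $g=\lambda\,|dz|^2=\rho\,|dw|^2$, expanding $|dw|^2$ in terms of $dz$ and $d\bar z$ and comparing it with $\lambda\,dz\,d\bar z$ forces $\partial_{\bar z}w=0$ once the charts are chosen orientation-compatibly. (This is where orientability of $X$ enters: it lets us orient all charts consistently, so that every transition map is orientation-preserving, hence holomorphic rather than anti-holomorphic.) The transition maps are then biholomorphisms between planar domains, the atlas defines a complex structure on $X$, and $g$ is compatible with it by construction.

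To produce isothermal coordinates near a point $p$, I would fix an oriented smooth chart with coordinate $z=x+iy$ and write $g=E\,dx^2+2F\,dx\,dy+G\,dy^2$, where $E,G>0$ and $EG-F^2>0$. Complexifying yields the purely algebraic identity
\[
 g=\lambda\,\bigl|dz+\mu\,d\bar z\bigr|^2,\qquad
 \lambda=\tfrac14\bigl(E+G+2\sqrt{EG-F^2}\bigr),\qquad
 \mu=\frac{E-G+2iF}{E+G+2\sqrt{EG-F^2}},
\]
where $\lambda$ is smooth and positive and $|\mu|<1$ since $g$ is positive definite; note that $\mu$ is smooth as well. A smooth map $w=u+iv$ defined near $p$ then supplies isothermal coordinates exactly when it solves the Beltrami equation $\partial_{\bar z}w=\mu\,\partial_z w$ with $\partial_z w$ nowhere zero: in that case $dw=\partial_z w\,(dz+\mu\,d\bar z)$, so $g=(\lambda/|\partial_z w|^2)\,|dw|^2$, while the Jacobian of $w$ equals $|\partial_z w|^2\,(1-|\mu|^2)>0$, so that $w$ is a local diffeomorphism.

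The remaining task is to solve this Beltrami equation locally with a non-degenerate solution. After shrinking the chart one may assume $\|\mu\|_{\infty}<1$; then I would extend $\mu$ to a smooth, compactly supported function on $\C$ still satisfying $\|\mu\|_{\infty}<1$ and apply the existence part of the measurable Riemann mapping theorem to obtain a quasiconformal homeomorphism $w\colon\C\to\C$ with $\partial_{\bar z}w=\mu\,\partial_z w$. Since $\mu$ is smooth, elliptic regularity upgrades $w$ to a smooth map, and being a quasiconformal homeomorphism it has nowhere-vanishing derivative; hence $w$ restricts near $p$ to the desired isothermal chart. (For smooth, or even merely H\"older continuous, $\mu$ this solvability is the classical theorem of Korn and Lichtenstein, which predates the measurable Riemann mapping theorem; for real-analytic metrics it goes back to Gauss and can be obtained from the Cauchy--Kovalevskaya theorem.)

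The main obstacle is precisely this last analytic ingredient, the local solvability of the Beltrami equation with a regular coefficient; everything else is linear algebra and the bookkeeping needed to glue the charts into an atlas. The analytic input can simply be quoted from the literature on quasiconformal mappings discussed in the later sections of the survey.
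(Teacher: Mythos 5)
The paper states this classical theorem without proof (it is attributed to Gauss and used as background), so there is no in-paper argument to compare against. Your outline is the standard and correct modern proof: the algebraic reduction $g=\lambda\,|dz+\mu\,d\bar z|^2$ with your formulas for $\lambda$ and $\mu$ checks out (indeed $EG-F^2=\lambda^2(1-|\mu|^2)^2$, so $E+G+2\sqrt{EG-F^2}=4\lambda$ and $|\mu|<1$ follows from positive definiteness), the reformulation as a Beltrami equation is right, and the gluing step correctly isolates where orientability is used. The one imprecise clause is ``being a quasiconformal homeomorphism it has nowhere-vanishing derivative'': that is false as a general statement about smooth quasiconformal homeomorphisms (e.g.\ $z\mapsto z|z|^2$ is a smooth quasiconformal homeomorphism of $\C$ whose derivative vanishes at the origin, though its Beltrami coefficient is discontinuous there). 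What you actually need, and what the Korn--Lichtenstein / Ahlfors--Bers regularity theory delivers for H\"older or smooth $\mu$, is that the normalized solution is a $C^{1,\alpha}$ (resp.\ smooth) \emph{diffeomorphism}, whence $\partial_z w\neq 0$ from $J_w=|\partial_z w|^2(1-|\mu|^2)$. Since you are quoting that classical theorem anyway, this is a matter of phrasing rather than a gap; the proposal is sound.
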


\begin{figure}
	\begin{tikzpicture}
	\clip (-2.3,-1.7) rectangle (8.3,1.8);
	\node at (0,0) {\includegraphics[scale=.37]{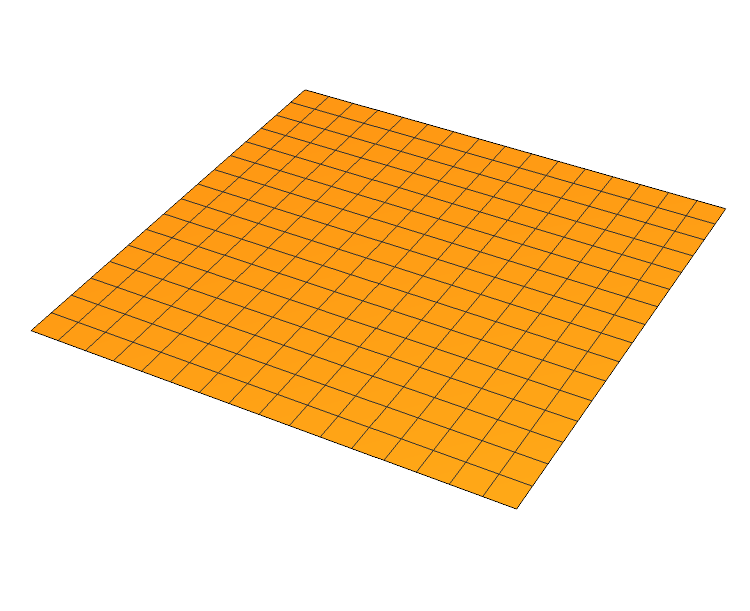}};
	\node at (6,0) {\includegraphics[scale=.37]{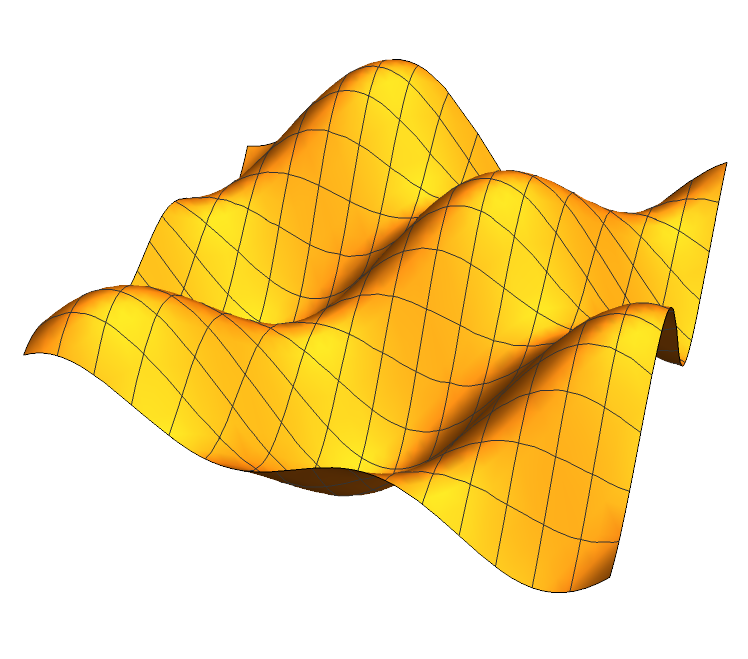}};
	\draw[->] (2.3,0) to[out=30, in=150] (3.5,0);
	\node at (3,0.5) {$f$};
	\end{tikzpicture}
	\caption{Illustration of a conformal map from the plane onto a Riemannian surface embedded in Euclidean space.}\label{figure:classical}
\end{figure}

An implication of Theorem \ref{theorem:uniformization_classical} and Theorem \ref{theorem:isothermal} is the following statement.

\begin{theorem}\label{theorem:uniformization_sphere}
Let $(X,g)$ be a Riemannian surface homeomorphic to the $2$-sphere. Then there exists a complex structure on $X$ that is compatible with $g$ and a conformal map from the Riemann sphere $\widehat \C$ onto $X$.
\end{theorem}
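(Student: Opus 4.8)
The plan is to obtain the statement as an immediate consequence of Theorem \ref{theorem:isothermal} followed by Theorem \ref{theorem:uniformization_classical}. First I would record the purely topological observations about $X$: since $X$ is homeomorphic to the $2$-sphere $\mathbb S^2$, it is a compact $2$-manifold without boundary, it is simply connected, and it is orientable. The orientability is the one point that merits an explicit word, as explained below. Granting it, the hypotheses of Theorem \ref{theorem:isothermal} are met, so there is a complex structure on $X$ compatible with the given Riemannian metric $g$; from now on we regard $X$ as a Riemann surface carrying this structure.

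Next I would apply the classical uniformization theorem. As a Riemann surface, $X$ is simply connected and without boundary, and it is compact, so the compact case of Theorem \ref{theorem:uniformization_classical} produces a conformal homeomorphism from $\widehat{\mathbb C}$ onto $X$. Since conformality here is measured with respect to the complex structure we already arranged to be compatible with $g$, this map together with that complex structure is exactly what the statement asks for, and the proof is complete.

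The only step that is not a verbatim citation is the claim that $X$ is an \emph{orientable} Riemannian surface, which is what lets us feed it into Theorem \ref{theorem:isothermal}. This is not a real obstacle: orientability of a topological $2$-manifold is a homeomorphism invariant, and $\mathbb S^2$ is orientable, so orientability transports to $X$ independently of the metric $g$. Once this is noted, there is nothing further to prove; the content of the theorem lies entirely in Theorems \ref{theorem:isothermal} and \ref{theorem:uniformization_classical}, and the present statement is their combination in the special case of the sphere.
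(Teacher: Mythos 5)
Your proposal is correct and follows exactly the route the paper intends: the paper presents this statement as an immediate implication of Theorem \ref{theorem:isothermal} and Theorem \ref{theorem:uniformization_classical}, and your argument is precisely that combination, with the (worthwhile) explicit check that orientability of $X$ is inherited from $\mathbb S^2$ so that Theorem \ref{theorem:isothermal} applies.
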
 

\subsection{Riemann surfaces with a metric structure}\label{section:riemann}

As we discussed, a Riemann surface does not carry a natural metric. However, surfaces embedded in Euclidean space do carry a natural Riemannian metric inherited from the ambient space, which gives rise to a compatible conformal structure. Hence, if $X$ is an embedded smooth submanifold of Euclidean space, then Theorem \ref{theorem:uniformization_sphere} provides us with a conformal map from the Riemann sphere onto $X$. See Figure \ref{figure:classical} for an illustration, where infinitesimal squares (or circles) are mapped to infinitesimal squares (or circles); this actually gives a geometric interpretation of conformality for maps between surfaces embedded in Euclidean space. 

There are some other classes of Riemann surfaces such as polyhedral and Aleksandrov surfaces that carry a natural metric. This metric does not arise from a Riemannian metric, yet it is compatible in a sense with the complex structure; see Section \ref{section:complex_structures}.  

A \textit{polyhedral surface} is a surface constructed by gluing planar polygons along their boundaries. More formally, a polyhedral surface is a locally finite polyhedral complex (as defined in \cite{BridsonHaefliger:metric}*{Definition I.7.37}) that is homeomorphic to a $2$-dimensional manifold. A polyhedral surface carries a natural \textit{length metric} (i.e., the distance between points is equal to the infimum of the lengths of paths connecting these points) that is locally isometric in the interior of each polygon to the Euclidean metric. Note that this metric does not arise from a Riemannian metric because of singularities at the vertices. Moreover, each polyhedral surface $X$ carries a natural complex structure whose atlas contains the maps that identify the polygons of $X$ with the corresponding polygons in $\C$. See \cite{Courant:Dirichlet}*{Section II.4} and \cite{NtalampekosRomney:length}*{Section 2.5.1} for further details. Thus, the classical uniformization theorem applies to polyhedral surfaces; see Figure \ref{figure:polyhedral}.

\begin{figure}
	\begin{tikzpicture}
		\node  at(1,0) {\includegraphics[scale=0.25]{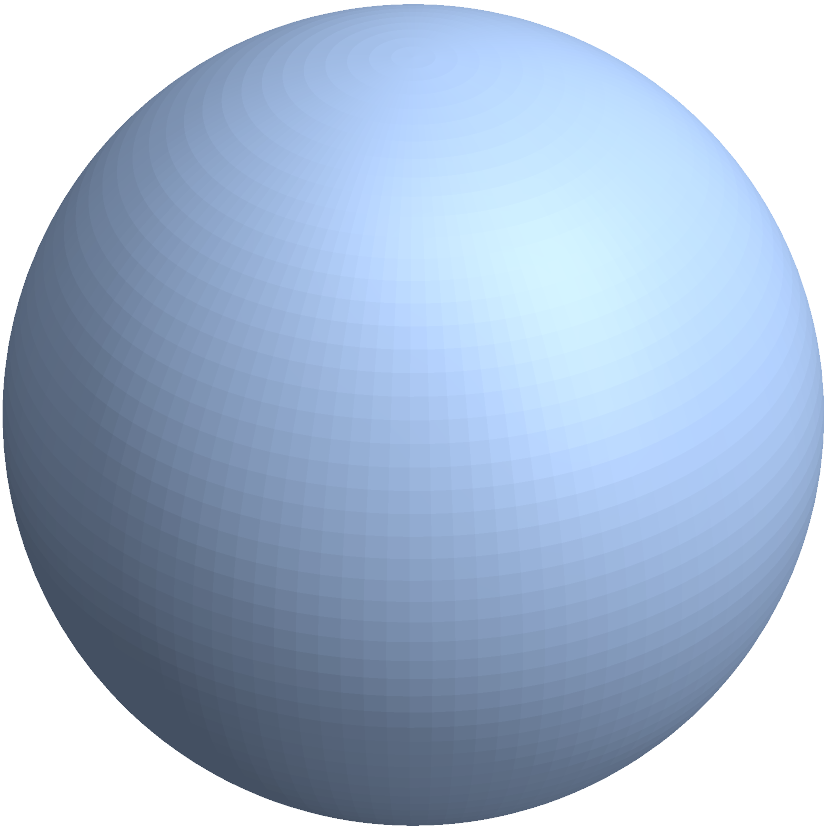}};
		\node  at (7,0) {\includegraphics[scale=0.25]{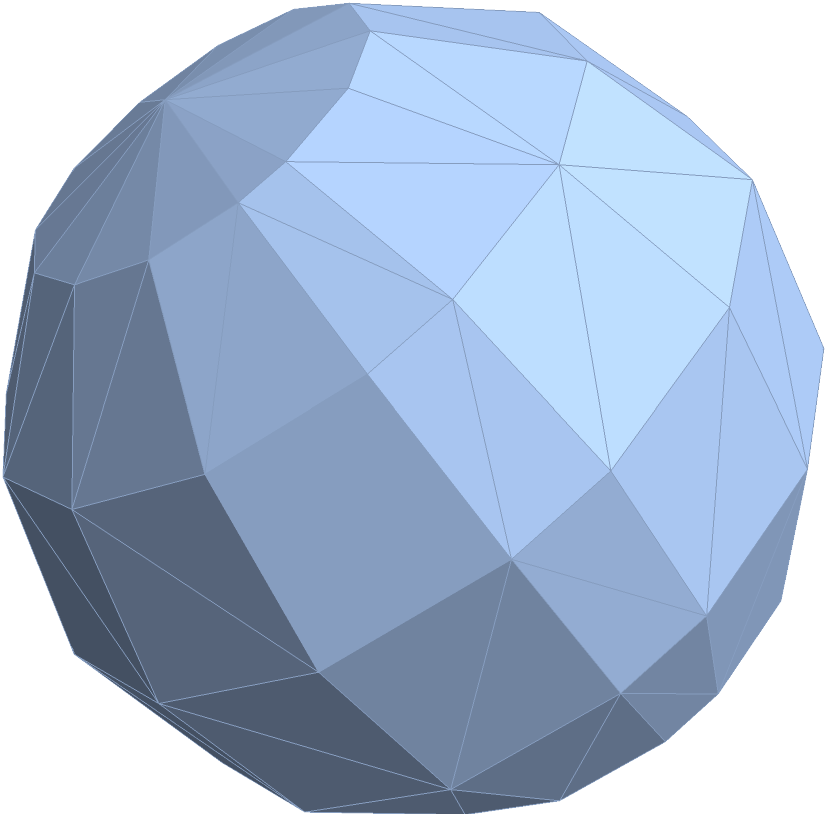}};
		\draw[->] (3,0) to[out=30, in=150](5,0);
		\node at (4,0.6) {$f$};
	\end{tikzpicture}
	\caption{Illustration of a conformal map from the standard sphere onto a polyhedral sphere.}\label{figure:polyhedral}
\end{figure}

An \textit{Aleksandrov surface}, according to the analytic definition, is a $2$-manifold with a length metric that admits a complex structure such that in local coordinates the metric can be defined by a length element of the form
$$e^{u(z)}|dz|,$$
where $u$ is the difference of two subharmonic functions such that $e^u$ is locally integrable on analytic curves. Aleksandrov surfaces contain the classes of Riemannian and polyhedral surfaces and, roughly speaking, they form the largest class of surfaces on which a notion of integral curvature can be defined. For that reason, initially those surfaces were termed as \textit{surfaces of bounded curvature}. It is known that Aleksandrov surfaces are exactly those surfaces that can be approximated by polyhedral or Riemannian surfaces satisfying local integral curvature bounds. We direct the reader to \cites{AleksandrovZalgaller:intrinsic, Huber:aleksandrov, Reshetnyak:bounded_curvature} for details and also to \cites{BonkLang:biLipschitz, Troyanov:aleksandrov} for an overview of Aleksandrov surfaces.

We will return again to Riemannian, polyhedral, and Aleksandrov surfaces in Section \ref{section:complex_structures} and we will discuss in more detail the compatibility of their metric and the complex structure. More generally, we will present results in response to the following question.

\begin{question}\label{question:structure}
What is the largest class of metric surfaces that admit a conformal structure compatible with the metric?
\end{question}

\section{Preliminaries on metric spaces}\label{section:prelim}

\subsection{Geometric notions}\label{section:metric}

Each Riemannian manifold $(X,g)$ carries an intrinsic metric $d_g$ induced by $g$. To simplify notation, we will implicitly assume that a Riemannian manifold is equipped with this metric and we will denote the resulting metric space by $(X,g)$ rather than $(X,d_g)$. A \textit{metric surface} is a metric space $(X,d)$ that is homeomorphic to a topological $2$-manifold $X$ with or without boundary.

Let $(X,d)$ be a metric space. We denote by $B(x,r)$ the open ball centered at a point $x\in X$ and with radius $r>0$. In order to signify the metric $d$, if necessary, we use instead the notation $B_d(x,r)$. The same convention applies to other metric notions. For example, the diameter of a set $E\subset X$ is denoted by $\diam(E)$ or equivalently by $\diam_d(E)$.

For $s>0$ the \textit{Hausdorff $s$-measure} of a set $E\subset X$ is defined as 
$$\mathcal H^s(E)= \lim_{\delta\to 0^+}\mathcal H^s_\delta(E),$$
where 
$$\mathcal H^s_\delta(E)= \frac{2^{-s}\pi^{s/2}}{\Gamma(2^{-1}s+1)}\inf\left\{\sum_{i\in I} \diam(E_i)^s: E\subset \bigcup_{i\in I}E_i,\,\, \diam(E_i)<\delta,\, i\in I \right\}$$
for $\delta\in (0,\infty]$. Here $\Gamma$ denotes the usual gamma function. We will be interested mostly in the case that $s=2$. Then the normalizing constant in the definition of $\mathcal H^s_\delta$ takes the value $\pi/4$ and $\mathcal H^2$ is also called the area measure on $X$. With this normalization, Hausdorff $2$-measure in the plane simply agrees with Lebesgue measure. The Lebesgue measure of a set $E\subset \C$ is alternatively denoted by $|E|$.

We say that the space $X$ is \textit{Ahlfors $s$-regular} if there exists a constant $M\geq 1$ such that 
$$M^{-1}r^s\leq \mathcal H^s(B(x,r))\leq Mr^s$$
for each $x\in X$ and $0<r<\diam (X)$.  We say that $X$ is \textit{linearly locally connected} or $\llc$ in short if there exists a constant $M\geq 1$ such that for each ball $B(a,r)$ in $X$ the following two conditions hold.
\begin{enumerate}[label=\normalfont($\llc_{\arabic*}$)]
\item\label{llc1} For every $x,y\in B(a,r)$ there exists a continuum $E\subset B(a,Mr)$ that contains $x$ and $y$.
\item\label{llc2} For every $x,y\in X\setminus B(a,r)$ there exists a continuum $E\subset X\setminus B(a,r/M)$ that contains $x$ and $y$.
\end{enumerate}
In that case we say that $X$ is $M$-$\llc$. Geometrically, on a surface the $\llc$ condition prevents in a quantitative manner cusps, thin bottlenecks, and dense wrinkles; see Figure \ref{figure:llc} for an illustration. Condition \ref{llc1} is also called the \textit{bounded turning} condition.

We say that $X$ is a \textit{doubling space} if there exists a constant $M\geq 1$ such that for every $R>0$, every ball of radius $R$ can be covered by at most $M$ balls of radius $R/2$. In that case we say that $X$ is $M$-doubling. Note that if $X$ is Ahlfors $s$-regular for some $s>0$, then it is also doubling. Geometrically, the doubling condition expresses in a quantitative way the quality of a space being finite dimensional. For example, $n$-dimensional Euclidean space $\R^n$ is $M(n)$-doubling for a constant $M(n)$ that converges to $\infty$ as $n\to\infty$. On the other hand, $\ell^\infty(\N)$ is not doubling. 

\begin{figure}
	\includegraphics{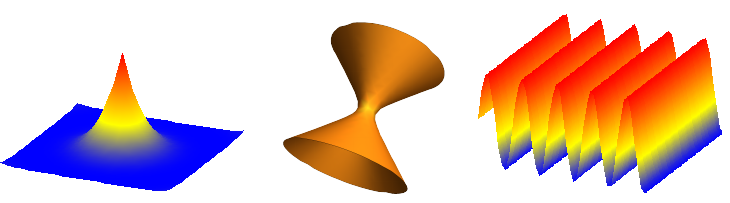}
	\caption{Surfaces with a cusp, a thin bottleneck, and dense wrinkles, respectively.}\label{figure:llc}
\end{figure}

Let $(X,d,\mu)$ be a metric measure space. Here $\mu$ is assumed to be an outer measure that is Borel regular. For a set $G\subset X$ and disjoint sets $E,F\subset G$ we define $\Gamma(E,F;G)$ to be the family of curves in $G$ joining $E$ and $F$. Let $\Gamma$ be a family of curves in $X$. A Borel function $\rho\colon X\to [0,\infty]$ is \textit{admissible} for $\Gamma$ if $\int_\gamma \rho\, ds\geq 1$ for all rectifiable paths $\gamma\in \Gamma$. Let $Q\geq 1$. We define the \textit{$Q$-modulus} of $\Gamma$ as
$$\Mod_Q \Gamma=\inf_\rho \int_X \rho^Q \, d\mu,$$
where the infimum is taken over all admissible functions $\rho$ for $\Gamma$. When $X$ is a metric surface of locally finite Hausdorff $2$-measure, $\mu=\mathcal H^2$, and $Q=2$ we will refer to $\Mod_2$ as the conformal modulus on $X$. Modulus is an outer measure on families of curves in $X$, so it conveys the size of a curve family; see Figure \ref{fig:modulus_plane} for an illustration.

We say that a metric measure space $(X,d,\mu)$ is a \textit{$Q$-Loewner space} if there exists a decreasing function $\varphi\colon (0,\infty)\to (0,\infty)$ such that for every pair of disjoint non-degenerate continua $E,F\subset X$ we have
$$\Mod_Q\Gamma(E,F;X)\geq \varphi(\Delta(E,F)),$$
where $\Delta(E,F)$ is the \textit{relative distance} of $E$ and $F$, which is defined by 
$$ \frac{\dist(E,F)}{\min\{\diam (E),\diam(F)\}}.$$
Geometrically, in a Loewner space every two continua that are large and close to each other can be connected by a rich family of curves. We direct the reader to \cite{Heinonen:metric} for further background on the above metric notions.

\subsection{Metric Sobolev spaces}
Let $(X,d_X,\mu)$ be a metric measure space such that $(X,d_X)$ is separable and $\mu$ is a locally finite Borel regular outer measure on $X$. Let $(Y,d_Y)$ be another metric space and consider a map $h\colon (X,d_X)\to (Y,d_Y)$. We say that a Borel function $g\colon X\to [0,\infty]$ is an \textit{upper gradient} of $h$ if 
\begin{align}\label{ineq:upper_gradient}
    d_Y(h(x),h(y)) \leq \int_{\gamma} g \, ds
\end{align}
for all $x,y\in X$ and every rectifiable path $\gamma$ in $X$ joining $x$ and $y$. This is called the \textit{upper gradient inequality}. If, instead, the above inequality holds for all curves $\gamma$ outside a curve family of $p$-modulus zero, where $p\geq 1$, then we say that $g$ is a \textit{$p$-weak upper gradient} of $h$. In this case, there exists a curve family $\Gamma_0$ with $\Mod_p \Gamma_0=0$ such that all paths outside $\Gamma_0$ and all subpaths of such paths satisfy the upper gradient inequality. The weak upper gradient plays the role of the absolute value of the gradient of a smooth function defined on an open set in Euclidean space.

Let $L^p(X)$ denote the space of $p$-integrable Borel functions from $X$ to the extended real line $\widehat{\mathbb{R}}$, where two functions are identified if they agree $\mu$-almost everywhere. The Sobolev space $N^{1,p}(X,Y)$ is defined as the space of Borel maps $h \colon X \to Y$ that are essentially separably valued (see \cite{HeinonenKoskelaShanmugalingamTyson:Sobolev}*{Section 3.1}) and have a $p$-weak upper gradient $g\in L^p(X)$ such that the function $x \mapsto d_Y(y,h(x))$ is in $L^p(X)$ for some $y \in Y$. The assumption on essential separable values is automatically true when $Y$ is a separable space, which will be the case in all considerations in the present note. If $Y=\R$, we simply write $N^{1,p}(X)$. Note that when $X$ is an open subset of Euclidean space, then $N^{1,p}(X)$ coincides (after appropriate identifications) with the usual Sobolev space $W^{1,p}(X)$. The spaces $L_{\loc}^p(X)$ and $N_{\loc}^{1,p}(X, Y)$ are defined in the obvious manner. Each map $h\in N_{\loc}^{1,p}(X,Y)$ has a unique \textit{minimal} $p$-weak upper gradient $g_h\in L^p_{\loc}(X)$, in the sense that for any other $p$-weak upper gradient $g\in L^p_{\loc}(X)$ we have $g_h(x)\leq g(x)$ for $\mu$-a.e. $x\in X$. See the monograph \cite{HeinonenKoskelaShanmugalingamTyson:Sobolev} for background on metric Sobolev spaces.

\subsection{Convergence of metric spaces}
Let $(X,d_X)$ be a metric space and let $E\subset X$ and $\varepsilon>0$. We say that $E$ is \textit{$\varepsilon$-dense} (in $X$) if for each $x\in X$ we have $\dist(x,E)<\varepsilon$ or equivalently $N_{\varepsilon}(E)=X$, where $N_\varepsilon(E)$ denotes the open $\varepsilon$-neighborhood of $E$. A map $f \colon (X,d_X) \to (Y,d_Y)$ between metric spaces is an \textit{$\varepsilon$-isometry} if $f(X)$ is $\varepsilon$-dense in $Y$ and for each $x,y \in X$ we have
$$|d_X(x,y) - d_Y(f(x),f(y))| < \varepsilon$$

We define the \textit{Hausdorff distance} of two sets $E,F\subset X$ to be the {infimal value} $r>0$ such that $E\subset N_{r}(F)$ and $F\subset N_{r}(E)$. We denote the Hausdorff distance by $d_H(E,F)$. A sequence of sets $E_n\subset X$, $n\in \N$, \textit{converges in the Hausdorff sense} to a set $E\subset X$ if $d_H(E_n,E)\to 0$ as $n\to\infty$.

The \textit{Gromov--Hausdorff distance} between two metric spaces $X,Y$ is defined as the infimal value $r>0$ such that there is a metric space $Z$ with subsets $\widetilde{X}, \widetilde{Y} \subset Z$ such that $X$ and $Y$ are isometric to $\widetilde{X}$ and $\widetilde{Y}$, respectively, and $d_H(\widetilde{X}, \widetilde{Y}) < r$. This is denoted by $d_{GH}(X,Y)$. We note that if there exists an $\varepsilon$-isometry from $X$ to $Y$ for some $\varepsilon>0$, then $d_{GH}(X,Y)<2\varepsilon$ \cite{BuragoBuragoIvanov:metric}*{Corollary 7.3.28}. We say that a sequence of metric spaces $X_n$, $n\in \N$, \textit{converges in the Gromov--Hausdorff sense} to a metric space $X$ if $d_{GH}(X_n,X) \to 0$ as $n \to \infty$. By \cite{BuragoBuragoIvanov:metric}*{Corollary 7.3.28}, this is equivalent to the requirement that there exists a sequence of $\varepsilon_n$-isometries $f_n\colon X_n\to X$, where $\varepsilon_n>0$ and $\varepsilon_n\to 0$ as $n\to\infty$. In this case, we say that $f_n$, $n\in \N$, is an \textit{approximately isometric sequence}. See \cite{BuragoBuragoIvanov:metric}*{Section 7} and  \cite{Petrunin:metric_geometry}*{Section 5} for more background.

\section{Quasisymmetric uniformization}\label{section:qs}

\subsection{Quasisymmetric maps}
Quasisymmetric maps generalize the notion of conformal maps in Euclidean space and can be defined in arbitrary metric spaces, even without the presence of a smooth structure. A homeomorphism $\eta\colon [0,\infty)\to [0,\infty)$ is called a \textit{distortion function}. Let $(X,d_X)$, $(Y,d_Y)$ be metric spaces and $f\colon X\to Y$ be a homeomorphism. We say that $f$ is \textit{quasisymmetric} if there exists a distortion function $\eta$ such that for every triple of distinct points $x,y,z\in X$ we have
$$\frac{d_Y(f(x),f(y))}{d_Y(f(x),f(z))}\leq \eta\left(\frac{d_X(x,y)}{d_X(x,z)}\right).$$
In that case we say that $f$ is $\eta$-quasisymmetric. From a geometric point of view, quasisymmetric maps  preserve relative sizes. The definition implies that there exists a constant $M\geq 1$ depending only on $\eta$ such that for each $x\in X$ and $r>0$ there exists $R>0$ satisfying 
$$B(f(x),R)\subset f(B(x,r)) \subset B(f(x),MR).$$
See Figure \ref{figure:qs} for an illustration. Moreover, under some conditions on $X$ and $Y$, these inclusions can be taken as the definition of a quasisymmetric map; see \cite{Heinonen:metric}*{Theorem 10.19}.

\begin{figure}
	\begin{tikzpicture}[scale=.4,x=1.0cm,y=1.0cm]

\draw [line width=.5pt,fill=black,fill opacity=0.1] (0.76,-0.2) circle (2.984cm);

\draw [fill=black] (0.76,-0.2) circle (1.5pt);
\draw[color=black] (0.9,0.2) node {$\scriptstyle x$};
\draw(4,-3) node {$\scriptstyle B(x,r)$};

\draw (6,1.5) node {$f$};
\draw[->] (5,0.4) to [out=30, in=150] (7,0.5);

\draw[line width=.5pt,fill=black,fill opacity=0.1, rounded corners=7] (10.74,2.4) -- (10.6,1.1) -- (9.04,0.46) --(10,-2)-- (11.58,-2.42) -- (13,-3)--(14.94,-0.9) -- (14.4,1)-- (14.46,2.94) -- (12.78,2.06)  -- cycle;

\draw [fill=black] (12.54,0.18) circle (1.5pt);
\draw[color=black] (12.7,0.7) node {$\scriptstyle f(x)$};
\draw(12,-1.7) node {$\scriptstyle f(B(x,r))$};

\draw [line width=.5pt] (12.54,0.18) circle (1.23cm); 
\draw (12.54,0.18) -- (13.77,0.18) node[pos=0.5,below]{$\scriptstyle R$};

\draw [line width=.5pt] (12.54,0.18) circle (4.36cm);

\draw(18,-3) node {$\scriptstyle B(f(x),MR)$};

\end{tikzpicture}
	\caption{The definition of a quasisymmetric map.}\label{figure:qs}
\end{figure}
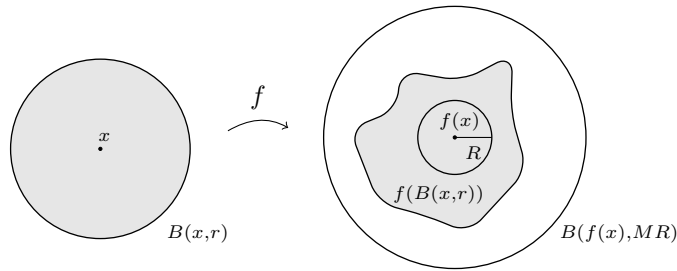

A homeomorphism $f\colon X\to Y$ is a \textit{bi-Lipschitz map} if there exists $L\geq 1$ such that
$$L^{-1}d_X(x,y)\leq  d_Y(f(x),f(y))\leq Ld_X(x,y)$$
for all $x,y\in X$. Note that every bi-Lipschitz map is quasisymmetric, but the converse is not true.

\subsection{The Bonk--Kleiner theorem}

The first milestone on the uniformization of non-smooth metric surfaces is due to Bonk and Kleiner \cite{BonkKleiner:quasisphere}, who provided a satisfactory set of sufficient conditions for the quasisymmetric uniformization of metric $2$-spheres.

It is elementary to show based on the definition that the $\llc$ condition is a quasisymmetric invariant. This was first observed by V\"ais\"al\"a \cite{Vaisala:quasimobius}*{Theorems 4.4 and 4.5}; see also \cite{Rehmert:thesis}*{Proposition 2.1.7} for a short argument. Note that every compact Riemannian manifold is $\llc$. Therefore, if $X$ is a metric $2$-sphere, the $\llc$ property is a necessary condition for the existence of a quasisymmetric parametrization of $X$ by the Riemann sphere $\widehat \C$, which is equipped with the spherical metric $\sigma$. Bonk and Kleiner proved the striking result that for Ahlfors $2$-regular spheres the $\llc$ condition is also sufficient for quasisymmetric uniformization.

\begin{theorem}[Bonk--Kleiner theorem]\label{theorem:bonk_kleiner}
Let $X$ be a metric $2$-sphere that is Ahlfors $2$-regular. Then there exists a quasisymmetric homeomorphism $f\colon \widehat \C\to X$ if and only if $X$ is $\llc$. The statement is quantitative. 
\end{theorem}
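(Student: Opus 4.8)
The plan is to prove the nontrivial implication; necessity of the $\llc$ condition is the easy half, since $\llc$ is a quasisymmetric invariant and $(\widehat{\C},\sigma)$ is $\llc$, so any metric sphere admitting a quasisymmetric parametrization by $\widehat{\C}$ must be $\llc$. So assume $X$ is Ahlfors $2$-regular and $M$-$\llc$, and build the quasisymmetric homeomorphism $f\colon\widehat{\C}\to X$ as a limit of discrete approximations. First I would \emph{discretize}: for each $k\in\N$ pick a maximal $2^{-k}$-separated set $V_k\subset X$, so that the balls $B(v,2^{-k})$, $v\in V_k$, cover $X$ while the balls $B(v,2^{-k-1})$ are pairwise disjoint. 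Ahlfors $2$-regularity forces $X$ to be doubling, which bounds the overlap multiplicity of this cover by a constant $N=N(M)$; hence the adjacency graph $G_k$ on $V_k$ --- with $u\sim v$ whenever $\br B(u,2^{-k})\cap\br B(v,2^{-k})\neq\emptyset$ --- has uniformly bounded valence. Using $\llc_1$ to join nearby points inside slightly enlarged balls, $\llc_2$ to separate distant points, and the hypothesis that $X$ is homeomorphic to $\mathbb S^2$, one then shows, after a controlled local refinement, that $G_k$ is the $1$-skeleton of a triangulation $T_k$ of $\mathbb S^2$ and that the family $\{T_k\}$ is uniformly ``good'': bounded valence, and combinatorial balls of comparable combinatorial radii have comparable size in $X$. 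Most of the topological bookkeeping of the argument lives in this step.

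Next I would \emph{pass to circle packings}: by the Koebe--Andreev--Thurston circle packing theorem, each triangulation $T_k$ is realized by a circle packing $\mathcal P_k$ on $\widehat{\C}$, that is, a family of circles indexed by $V_k$ that are mutually tangent exactly along the edges of $T_k$, unique up to M\"obius transformations. I would normalize $\mathcal P_k$ by a M\"obius map so that three fixed circles stay roughly round and spread apart, and then let $h_k\colon\widehat{\C}\to X$ be the piecewise-defined map carrying the carrier of $\mathcal P_k$ onto the net $\bigcup_{v\in V_k}B(v,2^{-k})$, sending the center of the circle at $v$ to $v$ and interpolating over each curvilinear triangle of the packing.

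The heart of the proof, and the step I expect to be the main obstacle, is a \emph{uniform quasisymmetry estimate}: the maps $h_k$ are $\eta$-quasisymmetric at all scales $\gtrsim 2^{-k}$ for a single distortion function $\eta=\eta(M)$. This amounts to showing the packings $\mathcal P_k$ do not degenerate, that is, ratios of combinatorial distances in $G_k$ are comparable, uniformly in $k$, to ratios of spherical distances between the corresponding circles. I would obtain this from two ingredients fed into each other. The first is the \emph{ring lemma} for circle packings, which uses the bounded valence of $T_k$ to guarantee that adjacent circles of $\mathcal P_k$ have comparable radii. The second is a set of \emph{combinatorial modulus} estimates for annuli in $T_k$: on the one hand, the combinatorial $2$-modulus of the family of curves joining the two boundary components of a combinatorial annulus is bounded above in terms of its combinatorial width, which reflects the upper regularity of $\Mod_2$ on $X$ coming from Ahlfors $2$-regularity; on the other hand, it is bounded away from zero for annuli of any fixed combinatorial width, which reflects that the $\llc$ conditions forbid thin bottlenecks; and both bounds are uniform in $k$. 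Since combinatorial $2$-modulus depends only on the combinatorics, these estimates transfer verbatim to the corresponding curve families of $\mathcal P_k$ on $\widehat{\C}$; comparing them there with the continuous Euclidean $2$-modulus of the associated ring domains forces $\mathcal P_k$ to be uniformly non-degenerate, and together with the ring lemma this yields the uniform quasisymmetry of the $h_k$.

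Finally I would \emph{take a limit}: the uniform quasisymmetry of the $h_k$ makes both $\{h_k\}$ and $\{h_k^{-1}\}$ equicontinuous once the normalization is fixed, so by the Arzel\`a--Ascoli theorem a subsequence converges uniformly to a homeomorphism $f\colon\widehat{\C}\to X$, and a uniform limit of $\eta$-quasisymmetric maps is again $\eta$-quasisymmetric; hence $f$ is the desired parametrization. The quantitative statement follows because every constant introduced along the way --- the overlap multiplicity, the valence bound, the combinatorial modulus bounds, the ring-lemma constant, and finally $\eta$ --- depends only on the Ahlfors regularity constant of $X$ and on $M$. The genuinely delicate point, requiring real work rather than bookkeeping, is the passage in the third step from the soft hypotheses on $X$ to the combinatorial modulus estimates and thence to non-degeneracy of the circle packings.
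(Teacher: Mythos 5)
Your proposal is correct and follows essentially the same route as the argument outlined in the paper (which itself only sketches the Bonk--Kleiner proof): a graph/triangulation approximation of $X$ built from the doubling and $\llc$ properties, the Koebe--Andreev--Thurston circle packing theorem to produce candidate maps, uniform quasisymmetry via comparison of combinatorial and conformal modulus together with the ring lemma, and a normalization plus Arzel\`a--Ascoli limit. You have correctly identified the discrete modulus estimates and the non-degeneracy of the packings as the technical heart of the proof, exactly as in \cite{BonkKleiner:quasisphere}.
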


A statement is said to be \textit{quantitative} if the parameters or constants in the conclusions depend only on the parameters or constants in the assumptions. 

We remark that one cannot expect the parametrization $f$ to be bi-Lipschitz, as was shown by Laakso \cite{Laakso:plane}. Moreover, higher dimensional analogues of the Bonk--Kleiner theorem are known to be false. Specifically, Semmes \cite{Semmes:good} proved that there exist $\llc$ (in fact, linearly locally contractible) and Ahlfors $3$-regular metric spaces homeomorphic to the $3$-sphere $\mathbb S^3$, but not quasisymmetrically equivalent to $\mathbb S^3$. Examples in all dimensions $n\geq 3$ were constructed in \cites{HeinonenWu:whitehead, PankkaWu:geometry_qs}.

\begin{proof}[Outline of proof]
In the non-trivial direction, one needs to find a parametrization of $X$ with nice geometric properties. If $X$ were a smooth manifold, the classical uniformization theorem would provide such a map. In the absence of a smooth structure the proof of Bonk and Kleiner relies on the Koebe--Andreev--Thurston circle packing theorem and on modern techniques from the area of analysis on metric spaces in order to produce a parametrization. 

The Ahlfors $2$-regularity assumption implies that $X$ is a doubling space. The first step is to use the $\llc$ and doubling properties to produce a good graph approximation of $X$ that is the $1$-skeleton of a triangulation of $X$. In the second step, the Koebe--Andreev--Thurston circle packing theorem then provides a circle packing in the Euclidean $2$-sphere $\mathbb S^2$ (or equivalently the Riemann sphere $\widehat \C$) and a map $f$ from the vertex set of the triangulation of $X$ to the centers of disks of the circle packing of $\mathbb S^2$. 

The idea is to show that this map converges to a quasisymmetric map, upon considering finer and finer triangulations of $X$. To this end, and as the third and final step, one shows that after suitable normalizations the map $f$ is $\eta$-quasisymmetric for some uniform distortion function $\eta$ that depends only on the data, i.e., the Ahlfors $2$-regularity and $\llc$ constants.

This final step is achieved with the aid of modulus estimates. The map $f^{-1}$ preserves a type of discrete modulus defined on the tangency graph of the circle packing of $\mathbb S^2$ and on the $1$-skeleton of the triangulation of $X$. The geometric assumptions on $X$ allow for conformal modulus bounds on $X$, as well as comparison principles between conformal and discrete modulus. The punchline of the argument is given by (a discrete analogue of) the principle that a modulus-preserving map between a  $2$-Loewner, doubling space and an $\llc$, Ahlfors $2$-regular space is quasisymmetric (see Theorem \ref{theorem:qc_qs} below).
\end{proof}

Since the original proof, several other proofs of comparable difficulty and novelty have been published by Rajala, Lytchak--Wenger, Meier--Wenger, and Romney jointly with the author \cites{Rajala:uniformization, LytchakWenger:parametrizations, MeierWenger:uniformization, NtalampekosRomney:length, NtalampekosRomney:nonlength}. We will discuss some of those results later. Remarkably, the result of Lytchak and Wenger provides canonical parametrizations of metric surfaces under the assumptions of Bonk--Kleiner. On the other hand, the parametrization given by the proof of Bonk and Kleiner is by no means canonical. 

The Bonk--Kleiner theorem has been used in the problem of quasisymmetric embedding of Sierpi\'nski carpets into the standard sphere \cites{Haissinsky:hyperbolic_groups,  CheegerErikssonBique:thin, HakobyanLi:qs_embeddings},  
in the construction of quasispheres \cites{VellisWu:qs, Vellis:chordarc}, as well as in connection with the problem of conformal removability \cite{Ntalampekos:gasket}.

\subsection{Canonical quasisymmetric parametrization}
A major contribution in the problem of quasisymmetric parametrization was made by Lytchak and Wenger \cite{LytchakWenger:parametrizations}, who proved that Ahlfors $2$-regular and $\llc$ spheres admit canonical quasisymmetric parametrizations. Their proof exploits a series of results that they developed while studying minimal surfaces in metric spaces. 
Let $(X,d_X)$ and $(Y,d_Y)$ be metric surfaces equipped with the Hausdorff $2$-measure and let $u\in N^{1,2}(X,Y)$ be a map with minimal $2$-weak upper gradient $g_u\in L^2(X)$. The \textit{Reshetnyak energy} of $u$ is defined by 
$$E^2_+(u)= \int_X g_u^2\, d\mathcal H^2.$$

\begin{theorem}[\cite{LytchakWenger:parametrizations}*{Theorem 6.2}]\label{theorem:canonical}
Let $X$ be a metric $2$-sphere that is Ahlfors $2$-regular and $\llc$. Then among all maps $v\in N^{1,2}(\widehat \C, X)$ that are uniform limits of homeomorphisms there exists a map $u\colon \widehat \C\to X$ with minimal Reshetnyak energy. The map $u$ is a quasisymmetric homeomorphism and is uniquely determined up to a M\"obius transformation of $\widehat \C$. 
\end{theorem}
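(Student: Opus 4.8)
I would run the direct method of the calculus of variations in three stages: produce an energy minimizer in the admissible class by a compactness argument; upgrade it to a quasisymmetric homeomorphism using its infinitesimal regularity together with the Loewner and $\llc$ geometry at play; and deduce uniqueness up to a M\"obius transformation from a rigidity in the energy inequality. The difficult stage is the second one.

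\textbf{Existence.} Let $\mathcal A\subset N^{1,2}(\widehat\C,X)$ be the class of maps that are uniform limits of homeomorphisms $\widehat\C\to X$, and put $m=\inf_{\mathcal A}E^2_+$. By the Bonk--Kleiner Theorem \ref{theorem:bonk_kleiner} there is a quasisymmetric homeomorphism $f\colon\widehat\C\to X$; since a quasisymmetric homeomorphism between Ahlfors $2$-regular surfaces lies in $N^{1,2}$ and satisfies $\int g_f^2\,d\mathcal H^2\lesssim\mathcal H^2(X)<\infty$, we get $\mathcal A\neq\emptyset$ and $m<\infty$. Choose a minimizing sequence $v_n\in\mathcal A$. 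Because $E^2_+$ is invariant under conformal reparametrizations of the domain, precompose each $v_n$ with a M\"obius transformation to impose a three-point normalization $v_n(p_i)=q_i$, where $p_1,p_2,p_3\in\widehat\C$ and $q_1,q_2,q_3\in X$ are fixed with $\min_{i\neq j}d(q_i,q_j)\gtrsim\diam X$, applied to the continuous monotone representative of $v_n$. A Courant--Lebesgue argument (choosing radii along which the circular energy is small), together with the $\llc$ property of $X$ — which forces a short loop in $X$ to bound a small disk — and the normalization, then yield a modulus of continuity for the $v_n$ that does not depend on $n$. Hence, by the Arzel\`a--Ascoli theorem and compactness of $X$, a subsequence converges uniformly to a map $u$, which is again a monotone surjection and so lies in $\mathcal A$; lower semicontinuity of $E^2_+$ under uniform convergence gives $E^2_+(u)\le m$, so $u$ is a minimizer.

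\textbf{The minimizer is a quasisymmetric homeomorphism.} A Morrey-type reparametrization argument, carried out in the metric setting by Lytchak and Wenger, shows that at almost every point the approximate metric derivative $N_x$ of $u$ is a norm whose John ellipse is a round ball; equivalently, $u$ is infinitesimally $\sqrt2$-quasiconformal, and combined with $u\in N^{1,2}$ this makes $u$ quasiconformal in the modulus sense. The key consequence is that $u$ collapses no nondegenerate continuum: if it did, then the curves joining that continuum to a distant one would form a family of positive $2$-modulus, by the Loewner property of $\widehat\C$, whereas their images would lie in the family of curves through a single point, which has zero $2$-modulus in the Ahlfors $2$-regular target, contradicting the modulus comparison inequality for $u$. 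Hence $u$ is injective, so it is a homeomorphism; and a quasiconformal homeomorphism from the $2$-Loewner, doubling sphere $\widehat\C$ onto the $\llc$, Ahlfors $2$-regular sphere $X$ is quasisymmetric by the modulus-to-quasisymmetry principle of Theorem \ref{theorem:qc_qs}.

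\textbf{Uniqueness, and the main obstacle.} If $u_1,u_2$ are two minimizers, then $h:=u_1^{-1}\circ u_2$ is a quasiconformal self-homeomorphism of $\widehat\C$, hence lies in $N^{1,2}_{\loc}$, is a.e.\ differentiable with positive Jacobian, and satisfies Lusin's condition (N). Using $u_1\circ h=u_2$, the a.e.\ chain rule for metric Sobolev maps and the area formula give
\[
\int\|N_{h(x)}\circ Dh(x)\|_{\mathrm{op}}^2\,d\mathcal H^2(x)=E^2_+(u_2)=E^2_+(u_1)=\int\|N_{h(x)}\|_{\mathrm{op}}^2\,|J_h(x)|\,d\mathcal H^2(x),
\]
where $N_y$ is the approximate metric derivative of $u_1$ at $y$ and $\|N_y\|_{\mathrm{op}}=g_{u_1}(y)$. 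Since $N_y$ has a round John ellipse a.e., one has $\|N_y\circ A\|_{\mathrm{op}}^2\ge|\det A|\,\|N_y\|_{\mathrm{op}}^2$ for every matrix $A$, with equality exactly when $A$ is conformal (the round ball being the unique area-maximal inscribed ellipse). The displayed equality therefore forces $Dh(x)$ to be conformal for a.e.\ $x$, so $h$ is a conformal automorphism of $\widehat\C$, i.e.\ a M\"obius transformation; conversely, precomposing a minimizer with a M\"obius transformation leaves $E^2_+$ unchanged, so the minimizer is unique up to a M\"obius transformation. The expected main obstacle is the second stage: showing that the a priori merely monotone Sobolev minimizer is genuinely a quasisymmetric homeomorphism requires both the delicate infinitesimal regularity of energy minimizers and the modulus estimates exploiting the $\llc$ and Ahlfors $2$-regularity of $X$ — essentially the analytic core shared with the theorems of Rajala and of Bonk--Kleiner — whereas the compactness of the first stage and the rigidity of the last are comparatively routine once that is in place.
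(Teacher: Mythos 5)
Your proposal is correct in outline and shares the analytic core of the paper's argument (infinitesimal isotropy of the energy minimizer, the one\nobreakdash-sided modulus inequality, upgrading to a homeomorphism via Ahlfors $2$-regularity, quasisymmetry via Theorem \ref{theorem:qc_qs}, and isotropy rigidity for uniqueness), but your existence step takes a genuinely different route. The paper, following Lytchak--Wenger, first proves that $X$ satisfies a quadratic isoperimetric inequality, then solves the Plateau problem in the classes $\Lambda(\gamma,X)$ over Jordan domains, and only afterwards proves the hard fact that the Plateau solution is a uniform limit of homeomorphisms (\cite{LytchakWenger:parametrizations}*{Theorem 1.2}) before assembling the sphere parametrization. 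You instead minimize directly over the class $\mathcal A$ of sphere maps that are \emph{a priori} uniform limits of homeomorphisms, seeding it with the Bonk--Kleiner parametrization (whose membership in $N^{1,2}$ with finite energy does follow from Theorems \ref{theorem:tyson} and \ref{theorem:definitions_qc}), and you recover compactness of $\mathcal A$ from Courant--Lebesgue plus monotonicity, the $\llc$ condition, and Youngs' characterization of uniform limits of homeomorphisms as monotone surjections. This buys you a shortcut around the isoperimetric inequality and around \cite{LytchakWenger:parametrizations}*{Theorem 1.2}, and it avoids the passage from Jordan regions to the sphere; the price is that Theorem \ref{theorem:bonk_kleiner} becomes an input rather than a corollary, so your argument cannot serve as an independent proof of the Bonk--Kleiner theorem, which is one of the selling points of the original route. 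Two small points to tighten: lower semicontinuity of $E^2_+$ requires extracting a weak $L^2$ limit of the minimal upper gradients rather than uniform convergence alone, and infinitesimal isotropy plus Sobolev regularity yields only the one\nobreakdash-sided inequality $\Mod_2\Gamma\leq \frac{4}{\pi}\Mod_2 u(\Gamma)$ (which is all you actually use), not two-sided quasiconformality at that stage.
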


We give a proof outline to describe the main tools appearing in the argument. We refer the reader to \cite{LytchakWenger:parametrizations} for the definitions of the various notions appearing in the outline.

\begin{proof}[Outline of proof]
The first step is to show that a metric $2$-sphere $X$ that is Ahlfors $2$-regular and $\llc$ admits a \textit{quadratic isoperimetric inequality} \cite{LytchakWenger:parametrizations}*{Theorem 1.4}. That is, every Lipschitz curve $\gamma\colon \mathbb S^1\to X$ is the trace of some function $u\in N^{1,2}(\D,X)$ such that the \textit{parametrized area} of $u$ is bounded above by $C\ell(\gamma)^2$ for some uniform constant $C>0$. 

Given a Jordan curve $\gamma$ in $X$, we denote by $\Lambda(\gamma,X)$ the possibly empty family of maps $v\in N^{1,2}(\D, X)$ whose trace has a continuous representative that \textit{weakly monotonically} parametrizes $\gamma$. Note that if $\gamma$ has finite length, then it admits an injective Lipschitz parametrization, so $\Lambda(\gamma,X)\neq \emptyset$ under the quadratic isoperimetric inequality. Lytchak and Wenger proved in earlier works \cites{LytchakWenger:regularity, LytchakWenger:discs, LytchakWenger:minimizers} that whenever $\Lambda(\gamma,X)$ is non-empty, and under the quadratic isoperimetric inequality, there exists a function $u\in N^{1,2}(\D,X)$ of minimal Reshetnyak energy such that $u$ has a continuous extension to $\br \D$ and is \textit{infinitesimally isotropic} (roughly speaking, this means that it maps infinitesimal balls to infinitesimal balls).

It is then shown that $u$ must be a uniform limit of homeomorphisms \cite{LytchakWenger:parametrizations}*{Theorem 1.2}. This property, together with infinitesimal isotropy, imply that
\begin{align}\label{ineq:canonical}
\Mod_2\Gamma \leq \frac{4}{\pi}\Mod_2 u(\Gamma)
\end{align}
for every curve family $\Gamma$ in $\br \D$. This inequality and the Ahlfors $2$-regularity assumption force $u$ to be a homeomorphism \cite{LytchakWenger:parametrizations}*{Theorem 3.6}. Finally, a standard argument (see Theorem \ref{theorem:qc_qs} below) based on the modulus inequality \eqref{ineq:canonical} and the geometric assumptions on $X$ shows that $u$ is quasisymmetric. The uniqueness of $u$ up to M\"obius transformations follows from the energy minimizing properties and specifically from the fact that $u$ is {infinitesimally isotropic}.

The above outlined argument allows the canonical parametrization of Jordan regions in $X$. One can modify the process to find a canonical parametrization of the entire sphere $X$; see \cite{LytchakWenger:parametrizations}*{p.~791}.
\end{proof}

A fairly detailed picture of the uniformization of surfaces admitting a quadratic isoperimetric inequality was given by Creutz and Romney \cite{CreutzRomney:branch}, with focus on the question whether a space admits a canonical parametrization. A metric space is called a \textit{chord-arc curve} if it bi-Lipschitz equivalent to the unit circle $\mathbb S^1$. 

\begin{theorem}[\cite{CreutzRomney:branch}*{Theorem 1.2}]
Let $X$ be a geodesic metric space homeomorphic to $\br \D$ that has finite Hausdorff $2$-measure and admits a quadratic isoperimetric inequality. Suppose that $\partial X$ is a chord-arc curve. The following are equivalent.
\begin{enumerate}[label=\normalfont(\arabic*)]
	\item $X$ is quasisymmetrically equivalent to $\br \D$.
	\item $X$ is doubling.
	\item $X$ is Ahlfors $2$-regular.
\end{enumerate}
\end{theorem}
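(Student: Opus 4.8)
The plan is to establish the equivalences through the implications $(3)\Rightarrow(2)$, $(1)\Rightarrow(2)$, $(2)\Rightarrow(3)$, and $(3)\Rightarrow(1)$. Two of these are soft. For $(3)\Rightarrow(2)$: every Ahlfors $2$-regular space is doubling, as recorded in Section~\ref{section:prelim}. For $(1)\Rightarrow(2)$: the disk $\br\D$ is doubling and the doubling property is preserved under quasisymmetric homeomorphisms (see \cite{Heinonen:metric}), so $X$, being quasisymmetrically equivalent to $\br\D$, is doubling. The content of the theorem is in the remaining two implications.

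For $(3)\Rightarrow(1)$ I would run the Lytchak--Wenger energy-minimization scheme underlying Theorem~\ref{theorem:canonical}, adapted to the disk. Since $\partial X$ is a chord-arc curve it is rectifiable, hence admits an injective Lipschitz parametrization, so under the quadratic isoperimetric inequality the family $\Lambda(\partial X, X)$ of Sobolev disks weakly monotonically parametrizing $\partial X$ is non-empty; the results of \cites{LytchakWenger:discs, LytchakWenger:minimizers} then furnish a map $u\in N^{1,2}(\br\D,X)$ of minimal Reshetnyak energy which extends continuously to $\br\D$, is infinitesimally isotropic, and is a uniform limit of homeomorphisms, and which therefore satisfies $\Mod_2\Gamma\le\tfrac{4}{\pi}\Mod_2 u(\Gamma)$ for every curve family $\Gamma$ in $\br\D$, as in \eqref{ineq:canonical}. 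Ahlfors $2$-regularity of $X$ then forces $u$ to be a homeomorphism, exactly as in the outline of Theorem~\ref{theorem:canonical}. It remains to check that $X$ is $\llc$ --- which should follow from geodesity together with the quadratic isoperimetric inequality and Ahlfors $2$-regularity --- so that the standard modulus-to-quasisymmetry principle of Theorem~\ref{theorem:qc_qs} promotes $u$ to a quasisymmetric homeomorphism.

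The crux is $(2)\Rightarrow(3)$: assuming $X$ doubling, one must produce $M\ge1$ with $M^{-1}r^2\le\mathcal H^2(B(x,r))\le Mr^2$ for all $x\in X$ and $0<r<\diam(X)$. The lower bound does not use doubling. If $B(x,\rho)$ is contained in the interior of $X$, then for a.e.\ $\rho$ its topological boundary is the metric sphere $S(x,\rho)$, which is rectifiable, so the quadratic isoperimetric inequality (together with a degree argument relating $B(x,\rho)$ to a filling of $S(x,\rho)$) gives $\mathcal H^2(B(x,\rho))\le C\,\mathcal H^1(S(x,\rho))^2$; combined with the Eilenberg coarea inequality, which yields $\partial_\rho\mathcal H^2(B(x,\rho))\gtrsim\mathcal H^1(S(x,\rho))$ in the a.e.\ sense, this produces the differential inequality $A(\rho)\lesssim A'(\rho)^2$ for $A(\rho)=\mathcal H^2(B(x,\rho))$, hence $A(r)\gtrsim r^2$ for $r$ below the distance from $x$ to $\partial X$. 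For balls that reach $\partial X$ one reduces, using geodesity, to balls centered at a point $p\in\partial X$; the chord-arc property then forces $\mathcal H^1(S(p,t))\gtrsim t$ --- the metric sphere $S(p,t)$ is a crosscut of the disk whose two endpoints on $\partial X$ lie at mutual distance $\gtrsim t$, with implicit constant depending on the chord-arc constant --- and the same coarea estimate gives $\mathcal H^2(B(p,s))\gtrsim s^2$.

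The hard half, and the one place where the doubling hypothesis is genuinely needed, is the upper bound $\mathcal H^2(B(x,r))\lesssim r^2$: the isoperimetric inequality only controls $\mathcal H^2(B(x,r))$ by $\mathcal H^1(S(x,r))^2$, and a metric sphere may a priori be far longer than its radius. I would argue by contradiction through a blow-up. A sequence of balls with $\mathcal H^2(B(x_n,r_n))/r_n^2\to\infty$ can, using the doubling property, be taken to occur at scales $r_n\to0$, and the rescaled pointed spaces $(X,d/r_n,x_n)$ then subconverge in the pointed Gromov--Hausdorff sense to a tangent space that still carries a quadratic isoperimetric inequality yet has uncontrolled area density --- a configuration one rules out. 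Alternatively, one doubles $X$ across its chord-arc boundary to obtain a metric sphere of finite area, applies the (weakly) quasiconformal uniformization theorem of \cite{Rajala:uniformization} or \cite{NtalampekosRomney:nonlength} to get a parametrization of this sphere by $\widehat\C$ with two-sided modulus control, and feeds those modulus bounds together with doubling into the area estimate. Making this upper bound rigorous is the main obstacle; the lower bound and the implication $(3)\Rightarrow(1)$ are, by comparison, routine uses of the machinery already surveyed above.
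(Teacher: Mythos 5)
The survey quotes this theorem from \cite{CreutzRomney:branch} without reproducing its proof, so there is no in-paper argument to measure you against; I am assessing your proposal on its own. The soft implications and the lower mass bound are fine: (3)$\Rightarrow$(2) is immediate, (1)$\Rightarrow$(2) follows since quasisymmetric images of doubling spaces are doubling \cite{Heinonen:metric}, and the differential inequality $A(\rho)\lesssim A'(\rho)^2$ obtained from the coarea inequality and the isoperimetric inequality does yield $\mathcal H^2(B(x,r))\gtrsim r^2$ (modulo the point that $S(x,\rho)$ need not be a single Jordan curve bounding $B(x,\rho)$, which requires the decomposition arguments of \cites{LytchakWenger:discs,LytchakWenger:minimizers}). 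In (3)$\Rightarrow$(1) your outline is the right one, but the $\llc$ property of $X$ is an actual hypothesis of Theorem \ref{theorem:qc_qs} and should not be left as ``should follow'': you need to derive linear local contractibility from the quadratic isoperimetric inequality together with the lower density bound (a loop of length $\ell$ bounds a filling of area $\le C\ell^2$, hence of diameter $\le C'\ell$ by the lower bound), and then handle points near $\partial X$ using the chord-arc hypothesis.

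The genuine gap is exactly where you place it: the upper mass bound in (2)$\Rightarrow$(3), which is the entire content of the theorem, is not proved. Neither of your two strategies closes it as stated. The blow-up route fails at the first step because Hausdorff $2$-measure is not semicontinuous in the needed direction under pointed Gromov--Hausdorff convergence, so ``a tangent space with uncontrolled area density'' is not something you can extract from the sequence, and even granting such a limit you derive no contradiction from it. The doubling-across-the-boundary route is circular: to get \emph{two-sided} modulus control from Rajala's theorem you must first verify that the doubled sphere is reciprocal, and the only tool available for that (Theorem \ref{theorem:area_reciprocal}) is precisely the upper area bound you are trying to prove; the unconditional theorem of \cite{NtalampekosRomney:nonlength} gives only the one-sided inequality $\Mod_2\Gamma\le\frac{4}{\pi}\Mod_2 h(\Gamma)$, which every finite-area sphere satisfies --- including non-$2$-regular ones --- so no upper mass bound can follow from it plus doubling alone. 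Note also that metric doubling together with the lower bound $\mathcal H^2(B(x,r))\ge cr^2$ does \emph{not} formally imply the upper bound (maximal separated-set counting runs the wrong way), so the quadratic isoperimetric inequality must enter the argument again in an essential, quantitative way. The actual proof in \cite{CreutzRomney:branch} runs through the canonical Lytchak--Wenger parametrization $u\colon\br\D\to X$ (which exists under the stated hypotheses with no regularity assumption) and an analysis of its fibers and distortion under the doubling hypothesis, rather than through a direct measure-theoretic or compactness argument; some version of that analysis is what your proposal is missing.
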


Theorem \ref{theorem:canonical} was generalized to compact surfaces by Fitzi and Meier \cite{FitziMeier:canonical}. 

\begin{theorem}[\cite{FitziMeier:canonical}*{Theorem 1.1}]\label{theorem:fitzimeier}
Let $M$ be a compact smooth $2$-manifold (with boundary) and $X$ be a metric space homeomorphic to $M$ that is Ahlfors $2$-regular and $\llc$. Then among all Riemannian metrics $h$ on $M$ and all maps $v\in N^{1,2}((M,h), X)$ that are uniform limits of homeomorphisms there exists a Riemannian metric $g$ on $M$ and a map $u\colon (M,g)\to X$ with minimal Reshetnyak energy. The map $u$ is a quasisymmetric homeomorphism and the pair $(u,g)$ is uniquely determined up to a conformal diffeomorphism $\varphi\colon (M,g)\to (M,h)$.
\end{theorem}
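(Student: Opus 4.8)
The plan is to adapt the minimal-surface strategy behind Theorem~\ref{theorem:canonical} to a surface $M$ which, unlike the sphere, carries a genuine moduli space of conformal structures, so that one is forced to minimise the Reshetnyak energy \emph{jointly} over the Riemannian metric $h$ and the map $v$. Throughout I would use that the hypotheses on $X$ imply, by \cite{LytchakWenger:parametrizations}*{Theorem~1.4}, that $X$ admits a quadratic isoperimetric inequality -- the standing assumption of the Lytchak--Wenger theory of energy minimisers -- and that $X$ is $2$-Loewner and satisfies the matching upper bounds for the modulus of families of separating loops, both being consequences of Ahlfors $2$-regularity together with the $\llc$ property. Since the Dirichlet $2$-energy of a map out of a surface is invariant under conformal changes of the domain metric, $E^2_+(v)$ depends on $h$ only through the point $[h]$ that it determines in the finite-dimensional moduli space $\mathcal M(M)$ of conformal structures on $M$; thus the joint minimisation is really a minimisation over $\mathcal M(M)$, coupled with, for each fixed conformal class, the fixed-domain minimisation of Lytchak--Wenger. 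Concretely, I would fix a minimising sequence $(h_n,v_n)$ for the joint problem, each $v_n$ a uniform limit of homeomorphisms, and replace $v_n$ by an energy minimiser in the class of such maps with fixed domain $(M,h_n)$ -- a class that is non-empty by a filling argument based on the quadratic isoperimetric inequality -- so that we may assume $v_n=u_{h_n}$ is continuous, infinitesimally isotropic, itself a uniform limit of homeomorphisms, and hence satisfies the modulus inequality $\Mod_2\Gamma\le\frac{4}{\pi}\Mod_2 u_{h_n}(\Gamma)$ from \eqref{ineq:canonical} for every curve family $\Gamma$ in $(M,h_n)$.

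The heart of the proof -- and the step I expect to be the main obstacle, being the only place where the global geometry of $X$ enters in an essential way and entirely absent in the sphere case of Theorem~\ref{theorem:canonical} -- is a compactness statement: the conformal classes $[h_n]$ must stay in a compact subset of $\mathcal M(M)$. If not, then along a subsequence the conformal structure degenerates and, by the collar lemma, $(M,h_n)$ contains an essential annulus $A_n$ (a collar about a pinching geodesic, or a degenerating collar of a boundary component) whose conformal modulus tends to $\infty$, so that the family $\Gamma_n$ of loops in $A_n$ separating its two boundary components has $\Mod_2\Gamma_n\to\infty$. On the other hand, since $u_{h_n}$ is a uniform limit of homeomorphisms its image is all of $X$ and the images of the two complementary pieces of $A_n$ are topologically essential, non-degenerate continua; bounding their diameters from below, and hence their relative distance from above, one obtains a uniform upper bound for $\Mod_2 u_{h_n}(\Gamma_n)$, contradicting \eqref{ineq:canonical}. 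The delicate point -- the real content of this obstacle -- is precisely that diameter lower bound: it should follow from the fact that an energy minimiser cannot afford to collapse a topologically substantial subsurface, combined with the area lower bound from Ahlfors $2$-regularity and, where the topology forces it, a degree obstruction; I would consult \cite{FitziMeier:canonical} for how this is carried out. Once compactness is in hand, $[h_n]$ subconverges to some $[g]\in\mathcal M(M)$, and after normalising by diffeomorphisms of $M$ so that $h_n\to g$ smoothly and invoking the equicontinuity supplied by the regularity theory, one extracts a locally uniform limit $u_{h_n}\to u\in N^{1,2}((M,g),X)$ with $E^2_+(u)\le\liminf_n E^2_+(u_{h_n})$; hence $(u,g)$ realises the joint infimum.

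Finally I would identify $u$ and establish uniqueness. As a joint minimiser, $u$ is energy-minimising with domain fixed to $(M,g)$, so by the regularity theory it is infinitesimally isotropic, a uniform limit of homeomorphisms, and satisfies \eqref{ineq:canonical}; this modulus inequality together with the Ahlfors $2$-regularity of $X$ forces $u$ to be a homeomorphism, exactly as in the sphere case \cite{LytchakWenger:parametrizations}, and then \eqref{ineq:canonical}, the local $2$-Loewner property of the smooth surface $(M,g)$, and the Ahlfors $2$-regularity and $\llc$ of $X$ yield, via the quasiconformal-to-quasisymmetric principle of Theorem~\ref{theorem:qc_qs}, that $u$ is quasisymmetric. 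For uniqueness, suppose $(u_1,g_1)$ and $(u_2,g_2)$ are two minimising pairs; then $\psi:=u_2^{-1}\circ u_1\colon(M,g_1)\to(M,g_2)$ is a Sobolev homeomorphism, and the infinitesimal isotropy of $u_2$ gives that the minimal upper gradient of $u_1$ equals the minimal upper gradient of $u_2$ pulled back by $\psi$ times that of $\psi$. Integrating this identity and using that the square of the minimal upper gradient of $\psi$ dominates its Jacobian $J_\psi$, with equality exactly where $\psi$ is conformal, one gets $E^2_+(u_1)\ge E^2_+(u_2)$; since both energies equal the common minimum, $\psi$ is $1$-quasiconformal, hence a conformal diffeomorphism by elliptic regularity, and $u_1=u_2\circ\psi$. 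This is the asserted uniqueness of $(u,g)$ up to a conformal diffeomorphism of the domain.
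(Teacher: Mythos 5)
The survey itself offers no proof of this theorem: it is stated purely as a citation to Fitzi--Meier, and the only argument given in the text is the outline for the sphere case (Theorem \ref{theorem:canonical}). Measured against that, your architecture is the right one and does match how \cite{FitziMeier:canonical} actually proceeds: conformal invariance of the Reshetnyak energy reduces the joint minimisation to a minimisation over the moduli space $\mathcal M(M)$ coupled with the fixed-class Lytchak--Wenger theory, the new difficulty is precisely that the conformal classes $[h_n]$ of a minimising sequence must not degenerate, and degeneration is excluded by playing a collar of modulus tending to $\infty$ against the modulus inequality \eqref{ineq:canonical} and an upper bound for the modulus of the image family of separating loops.

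The genuine gap is exactly where you flag it and then step around it: the diameter lower bound for the images of the two essential complementary pieces of a degenerating collar. Writing ``I would consult \cite{FitziMeier:canonical} for how this is carried out'' is circular in a proof of that paper's main theorem, and this bound is the \emph{only} ingredient not already present in the sphere case -- without it the contradiction with \eqref{ineq:canonical} does not close, since a priori an energy minimiser could collapse one side of the collar to a point, making every image loop short and the image modulus large. The bound has to be extracted from the facts that $u_{h_n}$ is a uniform limit of homeomorphisms (hence surjective and monotone, so the preimage structure of points is controlled) together with the $\llc$ and Ahlfors regularity of $X$; none of your stated hypotheses is quoted in a form that delivers it. Two smaller points also need attention: (i) after extracting the limit $u$ of the $u_{h_n}$ you must show $u$ is still a uniform limit of homeomorphisms, i.e.\ that the admissible class is closed under the convergence you use (in the sphere case this is a separate theorem of Lytchak--Wenger, not a triviality), and the lower semicontinuity of $E^2_+$ must be justified for the mode of convergence obtained from equicontinuity plus the uniform energy bound; (ii) the inequality \eqref{ineq:canonical} is stated in the survey for maps on $\br\D$, so you need its local version for infinitesimally isotropic Sobolev maps on a Riemannian domain before applying it to loop families in collars of $(M,h_n)$. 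The uniqueness sketch via infinitesimal isotropy and the Jacobian inequality is the standard argument and is fine in outline.
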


Generalizations of the Bonk--Kleiner theorem to compact surfaces other than the sphere have been established prior to the above result by Geyer--Wildrick \cite{GeyerWildrick:uniformization} and Ikonen \cite{Ikonen:isothermal}, who obtain the existence of (not necessarily canonical) quasisymmetric parametrizations of closed (i.e., compact, without boundary) surfaces as in Theorem \ref{theorem:fitzimeier} by smooth surfaces. The advantage of \cites{GeyerWildrick:uniformization, Ikonen:isothermal} is that the results are quantitative, as opposed to Theorem \ref{theorem:fitzimeier}, which is qualitative.

\subsection{Quasispheres of finite area}

We return to the case of spheres for the sake of simplicity. A metric space $X$ is a ($2$-dimensional) \textit{quasisphere} if there exists a quasisymmetric homeomorphism $f\colon \widehat \C\to X$. Although the $\llc$ condition is necessary for a quasisphere, Ahlfors $2$-regularity is too restrictive. In fact, quasispheres can even have infinite area. In the case of quasispheres of finite area (with some additional assumptions) one may obtain some characterizations in terms of the metric notions defined in Section \ref{section:metric}. In the case of smooth spheres, we have the next result, which is proved via the classical uniformization theorem; see  \cite{Ntalampekos:qs_approximation}*{Theorem 1.1}. A generalization of this result to the non-smooth case appears in Theorem \ref{theorem:reciprocal_qs}.

\begin{theorem}[Smooth quasispheres]\label{theorem:smooth}
Let $X$ be a Riemannian $2$-sphere. The following statements are quantitatively equivalent.
\begin{enumerate}[label=\normalfont{(\Alph*)}]
\item\label{smooth:quasisphere} $X$ is a quasisphere.
\item\label{smooth:llc-mod}
\begin{enumerate}[label=\normalfont{(\arabic*)}]
\item\label{smooth:llc} $X$ is a doubling and linearly locally connected space and
\item\label{smooth:modulus} there exist constants $L>1$ and $M>0$ such that for every ball $B(a,r)\subset X$ we have 
$$\Mod_2 \Gamma(\br B(a,r), X\setminus B(a,L r);X) <M.$$
\end{enumerate} 
\item\label{smooth:loewner} $X$ is a doubling and $2$-Loewner space. 
\end{enumerate}
\end{theorem}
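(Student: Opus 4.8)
The plan is to take the classical conformal uniformization theorem as the backbone and to run the cycle of implications \ref{smooth:loewner}$\Rightarrow$\ref{smooth:quasisphere}$\Rightarrow$\ref{smooth:llc-mod}$\Rightarrow$\ref{smooth:loewner}. By Theorem~\ref{theorem:uniformization_sphere} fix a conformal homeomorphism $\Phi\colon\widehat\C\to X$. Since in dimension two the conformal modulus $\Mod_2$ is a conformal invariant --- indeed it is preserved exactly --- for every curve family on $X$ the modulus computed on $X$ equals the modulus of the corresponding family on the Riemann sphere $(\widehat\C,\sigma)$, and only the metric quantities, namely distances, diameters, and balls, are genuinely distorted by $\Phi$. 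In this way each of the three conditions becomes a statement about the single map $\Phi$, and each turns out to be equivalent to the quasisymmetry of $\Phi$. Throughout I will use that $(\widehat\C,\sigma)$ is Ahlfors $2$-regular, doubling, $\llc$, and $2$-Loewner.

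For \ref{smooth:loewner}$\Rightarrow$\ref{smooth:quasisphere} I would note that $\Phi^{-1}\colon X\to\widehat\C$ is conformal, hence exactly modulus preserving; since $X$ is $2$-Loewner and doubling while $\widehat\C$ is $\llc$ and Ahlfors $2$-regular, the criterion of Theorem~\ref{theorem:qc_qs} applies directly and shows that $\Phi$ is quasisymmetric, so $X$ is a quasisphere, with distortion function depending only on the Loewner and doubling data. For \ref{smooth:quasisphere}$\Rightarrow$\ref{smooth:llc-mod}, let $f\colon\widehat\C\to X$ be $\eta$-quasisymmetric. Then $X$ is doubling (quasisymmetric images of doubling spaces are doubling) and $\llc$ (quasisymmetric invariance of the $\llc$ condition, \cite{Vaisala:quasimobius}), with constants controlled by $\eta$. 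For the modulus bound in \ref{smooth:llc-mod}, observe that $\Phi^{-1}\circ f$ is a quasiconformal self-homeomorphism of $\widehat\C$, being a quasisymmetric map composed with a conformal one, so together with the conformal invariance of $\Mod_2$ the map $f$ quasi-preserves $\Mod_2$ with a constant depending on $\eta$; choosing $L=L(\eta)$ large enough, one then checks, using the quasisymmetry of $f^{-1}$, that the inverse image under $f$ of the ring $B(a,Lr)\setminus\br B(a,r)$ separates two continua whose relative distance in $\widehat\C$ exceeds some $\Lambda(\eta)>0$ uniformly in $a$ and $r$; the Ahlfors $2$-regularity of $(\widehat\C,\sigma)$ bounds the corresponding connecting family by some $M_0(\eta)$, and transferring back through $f$ gives \ref{smooth:modulus} with $M=M(\eta)$.

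The crux is \ref{smooth:llc-mod}$\Rightarrow$\ref{smooth:loewner}. Doubling is part of the hypothesis, so the issue is the Loewner lower modulus bound. Using the conformal invariance of $\Mod_2$ and the Loewner property of $(\widehat\C,\sigma)$ with its Loewner function $\varphi_0$, for disjoint nondegenerate continua $E,F\subset X$ one has
\[
\Mod_2\Gamma(E,F;X)=\Mod_2\Gamma(\Phi^{-1}E,\Phi^{-1}F;\widehat\C)\geq\varphi_0\big(\Delta_\sigma(\Phi^{-1}E,\Phi^{-1}F)\big),
\]
so it suffices to prove $\Delta_\sigma(\Phi^{-1}E,\Phi^{-1}F)\leq\Psi(\Delta_X(E,F))$ for some increasing $\Psi$ depending only on the data --- that is, that $\Phi$ quasi-preserves relative distances of continua. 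This is where all three hypotheses enter simultaneously: the modulus upper bound in \ref{smooth:llc-mod} prevents $\Phi$ from compressing scales too strongly at any location, forcing preimages of metric balls to be roughly round in $\widehat\C$ and ruling out thin necks, while the $\llc$ and doubling properties control the reciprocal distortion and the connectivity, and the modulus geometry of the round sphere supplies the companion estimates. Executing this amounts to a proof by hand that the uniformizing map is quasisymmetric, and it is the technical heart of the argument.

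Accordingly, I expect the main obstacle to be precisely \ref{smooth:llc-mod}$\Rightarrow$\ref{smooth:loewner}: extracting a \emph{lower} modulus bound from an \emph{upper} modulus bound together with connectivity and dimension control, which cannot be handed off to Theorem~\ref{theorem:qc_qs} and instead requires a careful quantitative analysis of $\Phi$. A secondary but genuine point is that the classical uniformization theorem is not quantitative, so one must verify that $\Phi$ enters only as a device linking the modulus of $X$ to that of $(\widehat\C,\sigma)$ and never through its own size; since all the estimates used --- modulus comparisons, the Loewner and upper modulus bounds on $(\widehat\C,\sigma)$, and the quasisymmetry lemmas --- are quantitative in the data, the final constants will depend only on the doubling, $\llc$, and modulus constants appearing in the hypotheses.
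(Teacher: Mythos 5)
Your backbone---uniformize conformally via Theorem \ref{theorem:uniformization_sphere}, use the exact conformal invariance of $\Mod_2$, and reduce everything to metric properties of the single map $\Phi$---is the right one, and your directions \ref{smooth:loewner}$\Rightarrow$\ref{smooth:quasisphere} and \ref{smooth:quasisphere}$\Rightarrow$\ref{smooth:llc-mod} are essentially sound (for the latter, the claim that $\Phi^{-1}\circ f$ is quasiconformal should be justified by first invoking Theorem \ref{theorem:tyson} to get the one-sided modulus inequality for the quasisymmetric map $f$, and then the classical planar fact that a one-sided geometric inequality for a self-homeomorphism of $\widehat\C$ upgrades to two-sided quasiconformality and, after a M\"obius normalization, to quasisymmetry; only then does it follow that $\Phi$ itself is quasisymmetric with controlled distortion).

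The genuine gap is \ref{smooth:llc-mod}$\Rightarrow$\ref{smooth:loewner}: you declare it the technical heart, assert that it ``cannot be handed off to Theorem \ref{theorem:qc_qs},'' and then do not prove it. That assessment is backwards. Theorem \ref{theorem:qc_qs} is tailor-made for precisely this step once you apply it to $\Phi\colon\widehat\C\to X$ with $X$ as the \emph{target} rather than the source: the source $\widehat\C$ is doubling and $2$-Loewner, the quasiconformality hypothesis holds with $K=1$ by conformal invariance, and the hypotheses required of the target---doubling, $\llc$, and the modulus upper bound for separated concentric balls---are verbatim condition \ref{smooth:llc-mod}. After a M\"obius normalization this yields that $\Phi$ is quasisymmetric quantitatively, i.e., \ref{smooth:llc-mod}$\Rightarrow$\ref{smooth:quasisphere}, and the cycle then closes through \ref{smooth:quasisphere}$\Rightarrow$\ref{smooth:loewner}: quasisymmetry of $\Phi$ quasi-preserves relative distances of continua, so the Loewner bound on $X$ follows from the Loewner bound on $\widehat\C$ together with conformal invariance (or directly from Theorem \ref{theorem:tyson}). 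In other words, the efficient structure is the four implications \ref{smooth:llc-mod}$\Rightarrow$\ref{smooth:quasisphere} and \ref{smooth:loewner}$\Rightarrow$\ref{smooth:quasisphere} (both instances of Theorem \ref{theorem:qc_qs}, with $X$ as target and as source respectively), plus \ref{smooth:quasisphere}$\Rightarrow$\ref{smooth:llc-mod} and \ref{smooth:quasisphere}$\Rightarrow$\ref{smooth:loewner} (both consequences of the quasisymmetry of $\Phi$); there is never any need to extract a lower modulus bound from an upper one by hand. That \ref{smooth:llc-mod}$\Rightarrow$\ref{smooth:quasisphere}$\Rightarrow$\ref{smooth:loewner} is the natural chain is also reflected in Theorem \ref{theorem:bac}, which records exactly these two implications in the non-smooth setting.
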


Of course a smooth sphere satisfies \ref{smooth:llc-mod} and \ref{smooth:loewner} trivially, since it is locally bi-Lipschitz to the Euclidean plane. So the content of the theorem is about the quantitative relation of the quasisymmetric distortion function with the various parameters in \ref{smooth:llc-mod} and \ref{smooth:loewner}. Moreover, condition \ref{smooth:llc-mod}\ref{smooth:modulus} is a consequence of Ahlfors $2$-regularity \cite{Heinonen:metric}*{Lemma 7.18}. We remark that Semmes \cite{Semmes:chordarc2}*{Theorem 5.4} had already provided sufficient conditions in the spirit of Bonk--Kleiner for a smooth $2$-sphere to be a quasisphere, quantitatively. The next result concerns arbitrary non-smooth spheres of finite area.

\begin{theorem}[\cite{Ntalampekos:qs_approximation}*{Theorem 1.3}]\label{theorem:bac}
Let $X$ be a metric $2$-sphere of finite Hausdorff $2$-measure. Then the implications \ref{smooth:llc-mod} $\Rightarrow$ \ref{smooth:quasisphere} $\Rightarrow$ \ref{smooth:loewner} in Theorem \ref{theorem:smooth} are true, quantitatively.
\end{theorem}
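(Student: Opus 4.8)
The plan is to treat the two implications by separate arguments. For \ref{smooth:llc-mod} $\Rightarrow$ \ref{smooth:quasisphere} I would approximate $X$ by well-behaved metric spheres and reduce to the Bonk--Kleiner theorem (or the smooth case of Theorem \ref{theorem:smooth}); for \ref{smooth:quasisphere} $\Rightarrow$ \ref{smooth:loewner} I would pull back to the sphere and transfer a Loewner-type modulus estimate through the quasisymmetric parametrization, using the finiteness of the area of $X$ in an essential way.

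For the first implication, under \ref{smooth:llc-mod} one constructs a sequence of metric spheres $X_n$ that converge to $X$ in the Gromov--Hausdorff sense, are Ahlfors $2$-regular, and are $\llc$ and doubling with constants independent of $n$. A natural construction is to take finer and finer graph approximations of $X$ (as in the first step of the Bonk--Kleiner proof) and fill in the complementary Jordan regions with Euclidean triangles, obtaining polyhedral spheres; the finite area and, crucially, the modulus bound \ref{smooth:llc-mod}\ref{smooth:modulus} are what keep the filling cells from becoming arbitrarily thin or distorted, so that the uniform $\llc$ and doubling bounds persist. By the Bonk--Kleiner theorem (Theorem \ref{theorem:bonk_kleiner}), each $X_n$ admits an $\eta$-quasisymmetric homeomorphism $f_n \colon \widehat{\C} \to X_n$, with $\eta$ depending only on the uniform data. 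Normalizing the $f_n$ at three points that stay uniformly spread apart, the standard precompactness of uniformly quasisymmetric maps into a Gromov--Hausdorff convergent sequence of uniformly doubling $\llc$ spaces produces, after passing to a subsequence and composing with approximately isometric maps $X_n \to X$, a uniform limit $f \colon \widehat{\C} \to X$. The limit is $\eta$-quasisymmetric, and the normalization together with the uniform $\llc$ bound excludes degeneration, so $f$ is a homeomorphism; hence $X$ is a quasisphere. One could replace the polyhedral spheres by smooth Riemannian spheres and use the smooth case of Theorem \ref{theorem:smooth} in place of Bonk--Kleiner.

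For the second implication, let $f \colon \widehat{\C} \to X$ be an $\eta$-quasisymmetric homeomorphism. That $X$ is doubling is immediate, since $\widehat{\C}$ is connected and doubling and quasisymmetric maps preserve the doubling property. For the $2$-Loewner property, fix disjoint non-degenerate continua $E, F \subset X$ and set $E' = f^{-1}(E)$, $F' = f^{-1}(F)$. Since $f^{-1}$ is quasisymmetric, it quasi-preserves relative distances of continua, so $\Delta(E', F') \leq \psi(\Delta(E, F))$ for a homeomorphism $\psi$ of $[0, \infty)$ depending only on $\eta$; as $\widehat{\C}$ is $2$-Loewner with some decreasing gauge $\varphi_0$, we get $\Mod_2 \Gamma(E', F'; \widehat{\C}) \geq \varphi_0(\psi(\Delta(E, F)))$. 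The remaining point is to transfer this lower bound through $f$, i.e., to show $\Mod_2 \Gamma(E, F; X) = \Mod_2 f(\Gamma(E', F'; \widehat{\C})) \geq K^{-1} \Mod_2 \Gamma(E', F'; \widehat{\C})$ for a constant $K$ depending only on $\eta$. This is where the finiteness of $\mathcal H^2(X)$ enters: because $f$ is quasisymmetric, in dimension two its pointwise dilatation is comparable to a conformal one, so $f$ lies in $N^{1,2}(\widehat{\C}, X)$ with Reshetnyak energy controlled by $K \mathcal H^2(X) < \infty$, and consequently $f$ does not decrease $2$-modulus by more than the factor $K$, which is exactly the displayed inequality. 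Combining, $X$ is $2$-Loewner with the decreasing gauge $t \mapsto K^{-1} \varphi_0(\psi(t))$.

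The main obstacle is the construction of the approximating spheres $X_n$ in the first implication: one must deform the possibly very irregular metric of $X$ to Ahlfors $2$-regular metrics on nearby spheres without losing control of the $\llc$ and doubling constants, and the role of the modulus hypothesis \ref{smooth:llc-mod}\ref{smooth:modulus} is precisely to prevent the approximating cells from degenerating; once the uniform data are secured, the compactness argument and the exclusion of degeneration in the limit are routine. In the second implication, the only non-formal ingredient is the membership $f \in N^{1,2}(\widehat{\C}, X)$ together with the associated one-sided modulus inequality; this is a standard but delicate consequence of quasisymmetry and the finiteness of the area of the target, relying on a Vitali-type covering argument and the pointwise near-conformality of $f$ in the plane.
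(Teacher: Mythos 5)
Your second implication is essentially sound and matches the intended route: pulling back continua, using quasisymmetric quasi-invariance of relative distance together with the Loewner property of $\widehat\C$, and transferring the lower modulus bound through the one-sided inequality $\Mod_2\Gamma\leq K(\eta)\Mod_2 f(\Gamma)$, which is exactly Tyson's theorem (Theorem \ref{theorem:tyson}) for a locally quasisymmetric map into a space of locally finite Hausdorff $2$-measure. Note, though, that this inequality does not require $f\in N^{1,2}$ with finite energy in the way you describe, and it certainly does not make $f$ quasiconformal: Example \ref{example:lusin_qs} shows a quasisymmetric $f$ onto a surface of locally finite area that fails the Lusin $(N^{-1})$ property and is only \emph{weakly} quasiconformal. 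The one-sided inequality is all you need, but you should quote it rather than assert a two-sided comparability of dilatations.

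The first implication is where the real gap lies. The entire argument rests on the claim that from \ref{smooth:llc-mod} alone you can build polyhedral spheres $X_n\to X$ in the Gromov--Hausdorff sense that are Ahlfors $2$-regular, doubling, and $\llc$ with constants independent of $n$, and you justify this only by saying that finite area and the modulus bound \ref{smooth:llc-mod}\ref{smooth:modulus} "keep the filling cells from degenerating." This is not substantiated, and it misplaces where the modulus hypothesis actually acts. A modulus upper bound for annuli in $X$ is an analytic condition on $\mathcal H^2$; it gives no geometric control over Euclidean triangles you glue along a graph approximation, and $X$ itself need not be Ahlfors $2$-regular under \ref{smooth:llc-mod} (only an upper mass bound is implied in one direction, and nothing from below), so uniform $2$-regularity of the fillings is a genuinely hard statement --- it is essentially the content of Theorem \ref{theorem:main:approximation}, whose proof already presupposes the quasisymmetric parametrization you are trying to construct. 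The route the paper indicates avoids this circularity: apply the weakly quasiconformal uniformization theorem (Theorem \ref{theorem:wqc}) to obtain $h\colon\widehat\C\to X$ with $\Mod_2\Gamma\leq\frac{4}{\pi}\Mod_2 h(\Gamma)$; use \ref{smooth:llc-mod}\ref{smooth:modulus} together with Lemma \ref{lemma:modulus_log} to see that every point of $X$ has zero capacity, so that $h$ is a homeomorphism by Theorem \ref{theorem:upgrade_homeo}; and then feed the modulus bound in again as hypothesis (4) of Theorem \ref{theorem:qc_qs} to conclude that $h$ is quasisymmetric, quantitatively. In that argument the two places where \ref{smooth:llc-mod}\ref{smooth:modulus} is used are precisely the two steps your proposal does not supply.
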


In particular, the implication \ref{smooth:llc-mod} $\Rightarrow$ \ref{smooth:quasisphere} implies the Bonk--Kleiner theorem. The proof relies on the deep uniformization result for metric surfaces of locally finite area \cite{NtalampekosRomney:nonlength} that we present in Section \ref{section:wqc}. We discuss the reverse implications in Section \ref{section:reciprocal}. The implication  \ref{smooth:quasisphere} $\Rightarrow$ \ref{smooth:llc-mod} does not hold in general, while the implication \ref{smooth:loewner} $\Rightarrow$ \ref{smooth:quasisphere} is an open question. 

\begin{question}
Let $X$ be a metric sphere of finite Hausdorff $2$-measure. If $X$ is a doubling and $2$-Loewner space, is it a quasisphere?
\end{question}

\subsection{Arbitrary quasispheres}
A typical example of a quasisphere that is not Ahl\-fors $2$-regular and has infinite area is the \textit{snowsphere} of Meyer \cite{Meyer:origami}; see Figure \ref{figure:snowsphere}. The snowsphere is constructed as follows. Consider the surface of the unit cube in $\R^3$. Each face is a unit square and we subdivide it into nine squares of side length $1/3$. Then we replace the middle square with a cubical cap, consisting of five faces of side length $1/3$. We repeat this subdivision and replacement process in each square of side length $1/3$, etc. The space that we obtain in each stage of the construction is a polyhedral surface with its intrinsic metric and it is actually a quasisphere with uniform parameters. The snowsphere is the Gromov--Hausdorff limit of that sequence of spaces.

\begin{figure}
\centering
\includegraphics[scale=0.5]{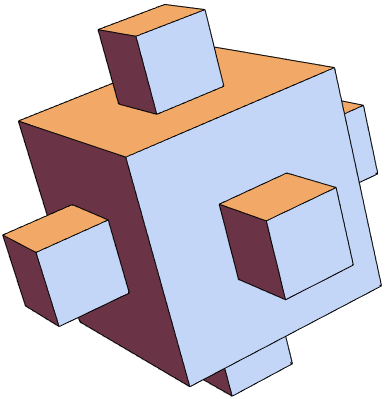}
\caption{The first stage of the construction of the snowsphere.}\label{figure:snowsphere}
\end{figure}

The example of the snowsphere has a direct connection with the dynamics of expanding Thurston maps on the $2$-sphere, the theory of which was thoroughly explored in \cite{BonkMeyer:Thurston}. One of the key results is that for an expanding Thurston map $f$ of the $2$-sphere, the associated visual metric on the $2$-sphere is quasisymmetric to the Euclidean metric if and only if the map $f$ is topologically conjugate to a rational map.

The next theorem proved in \cite{Ntalampekos:qs_approximation} allows us to study arbitrary quasispheres through approximation by smooth quasispheres. The result shows that every quasisphere resembles in a sense the snowsphere, which is constructed as a limit of polyhedral quasispheres.

\begin{theorem}[Smooth approximation of quasisymmetric $2$-manifolds]\label{theorem:main:approximation}
Let $(X,g)$ be a compact Riemannian $2$-manifold (with boundary) and let $d_X$ be a metric on $X$ that induces its topology. The following are quantitatively equivalent.
	\begin{enumerate}[label=\normalfont(\arabic*)]
	\item\label{theorem:main:approximation:1} The metric space $(X,g)$ is quasisymmetric to $(X,d_X)$.
	\item\label{theorem:main:approximation:2}  For each $k\in \N$ there exists a Riemannian metric $g_k$ on $X$ and a metric $d_k$ that is locally isometric to the intrinsic metric $d_{g_k}$ such that 
	\begin{align*}
	&\text{$(X,d_k)$ converges to $(X,d_X)$ in the Gromov--Hausdorff metric as $k\to\infty$ and}\\
	&\text{$(X,d_{k})$ is uniformly quasisymmetric to $(X,g)$ for each $k\in \N$.}
	\end{align*}
	\end{enumerate}
\end{theorem}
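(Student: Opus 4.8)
The statement is an equivalence whose two halves call for quite different arguments: $(2)\Rightarrow(1)$ is the soft direction, asserting that uniform quasisymmetry is preserved under Gromov--Hausdorff limits, while $(1)\Rightarrow(2)$ is the substantive one, requiring an explicit construction of the approximating surfaces. In both cases ``quantitative'' refers to the dependence of the quasisymmetric distortion function produced on the one assumed (and, for $(1)\Rightarrow(2)$, on $(X,g)$).

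\textbf{The implication $(2)\Rightarrow(1)$.} Realize the Gromov--Hausdorff convergence by isometric embeddings of $(X,d_X)$ and all $(X,d_k)$ into a common compact metric space $Z$, so that the image of $(X,d_k)$ converges to that of $(X,d_X)$ in the Hausdorff distance; let $f_k\colon (X,g)\to (X,d_k)$ be $\eta$-quasisymmetric homeomorphisms with $\eta$ independent of $k$, viewed as maps into $Z$. I would first check that these maps are equicontinuous, uniformly in $k$: given $x,y\in X$, choose $z$ with $d_g(x,z)\ge\frac12\diam_g(X)$; then $\eta$-quasisymmetry gives $d_Z(f_k(x),f_k(y))=d_k(f_k(x),f_k(y))\le\diam_{d_k}(X)\,\eta\left(\frac{2\,d_g(x,y)}{\diam_g(X)}\right)$, and since $\sup_k\diam_{d_k}(X)<\infty$ this is a common modulus of continuity. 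By Arzel\`a--Ascoli a subsequence of $(f_k)$ converges uniformly to a map $F\colon (X,g)\to Z$, whose image necessarily lies in (the copy of) $(X,d_X)$. The same estimate applied to the inverse maps --- whose images fill out the fixed space $(X,g)$ and so cannot degenerate --- gives, after a further subsequence, a two-sided inverse of $F$, so $F\colon (X,g)\to (X,d_X)$ is a homeomorphism. Finally, since $d_X(F(x),F(z))>0$ for distinct $x,z$, one passes to the limit in the quasisymmetry inequality and concludes that $F$ is $\eta$-quasisymmetric. Hence $(X,g)$ is quasisymmetric to $(X,d_X)$, quantitatively.

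\textbf{The implication $(1)\Rightarrow(2)$: the construction.} Let $f\colon (X,g)\to (X,d_X)$ be the $\eta$-quasisymmetric homeomorphism; replacing $d_X$ by the pullback $f^*d_X$, we may assume $X$ is a fixed smooth manifold carrying both $g$ and $d_X$, with the identity $\eta$-quasisymmetric from $(X,g)$ to $(X,d_X)$. Now $d_k$ and $g_k$ must be produced. Even for the snowflake $d_X=d_g^{1/2}$ --- which is quasisymmetric to $d_g$, hence a genuine instance of the hypothesis --- the naive guess fails: triangulating $(X,g)$ at mesh $2^{-k}$ and taking the piecewise-flat \emph{length} metric whose cells are sized by $d_X$ yields a metric of diameter tending to infinity, since a length metric cannot Gromov--Hausdorff-approximate a non-rectifiable one. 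The decisive point is that $d_k$ is only required to be \emph{locally} isometric to the length metric $d_{g_k}$, so it may --- and in general must --- fail to be geodesic on the large scale. For the snowflake the right construction is transparent: put $d_k=\varphi_k(d_g)$, where $\varphi_k$ is the concave function equal to $t\mapsto t^{1/2}$ on $[\rho_k,\infty)$ and linear on $[0,\rho_k]$, with $\rho_k\to 0$; then $d_k$ is locally isometric to the rescaling $\rho_k^{-1}g$ of $g$, one has $d_k(x,y)=d_X(x,y)$ whenever $d_g(x,y)\ge\rho_k$ so that $\sup|d_k-d_X|\le\rho_k^{1/2}\to 0$, and a short computation with $\varphi_k$ produces a distortion function $\eta'$ --- independent of $k$ --- making the identity $\eta'$-quasisymmetric from $(X,g)$ to $(X,d_k)$. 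For a general $d_X$ this ``regularization below scale $\rho_k$'' must be carried out locally, on a family of cells of $d_X$-diameter comparable to $\rho_k$ on each of which $d_X$ is quasisymmetrically modeled on a round disk: inside the cells one replaces $d_X$ by a Riemannian structure, and one must glue carefully along the cells' common boundaries.

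\textbf{The main obstacle.} The difficulty is concentrated in the general case of $(1)\Rightarrow(2)$: establishing \emph{uniform} quasisymmetry of the family $\{(X,d_k)\}$, with distortion controlled by $\eta$ alone. The key is that the local scaling factors used in the regularization are dictated by the quasisymmetric map $f$, and therefore satisfy quasi-doubling ($A_\infty$-type) estimates --- the factors on adjacent cells at a common scale are comparable, with constant depending only on $\eta$ --- which is exactly the condition ensuring that a deformation of this kind remains within a single quasisymmetry class. Arranging the cell decompositions coherently across all scales, controlling the errors near the gluing regions, and keeping every bound quantitative in $\eta$ is where the real work lies. By contrast, $(2)\Rightarrow(1)$ is the routine compactness argument outlined above.
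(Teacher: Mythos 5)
First, a caveat on the comparison itself: the survey states Theorem \ref{theorem:main:approximation} as a quotation from \cite{Ntalampekos:qs_approximation} and contains no proof of it, so your proposal can only be measured against the argument in that reference. Your direction (2)$\Rightarrow$(1) is the standard compactness argument and is essentially correct: embedding everything in a common compact $Z$, extracting uniform equicontinuity from the uniform distortion function, applying Arzel\`a--Ascoli to the maps and (with a little care, since their domains are the moving copies of $(X,d_k)$ in $Z$) to their inverses, and passing to the limit in the quasisymmetry inequality. Your two structural observations about the hard direction are also exactly right and worth crediting: a Gromov--Hausdorff limit of length spaces is a length space, so $d_k$ cannot be globally intrinsic and the ``locally isometric to $d_{g_k}$'' formulation is forced; and your explicit snowflake computation with $d_k=\varphi_k(d_g)$, $g_k=\rho_k^{-1}g$ is a correct instance of the theorem.

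The genuine gap is that the substantive implication (1)$\Rightarrow$(2) is proved only in that model case. For general $d_X$ you reduce the construction to a program --- decompose the surface into cells of $d_X$-diameter $\approx\rho_k$, replace each cell by a Riemannian piece, glue, and deduce uniform quasisymmetry from $A_\infty$-type comparability of adjacent scales --- but none of these steps is executed, and each conceals real difficulties. Concretely: (i) the cells of $(X,d_X)$ have no a priori regularity, and in particular their boundaries need not be rectifiable, so ``attaching a Riemannian piece along a cell boundary'' presupposes a quantitative one-dimensional uniformization of each boundary curve (a controlled identification with a round circle), which must itself be extracted from the quasisymmetry of $f$; (ii) gluing flat or Riemannian pieces produces cone-type singularities, whereas the statement demands a genuine Riemannian metric $g_k$, so a smoothing step with controlled distortion is needed; (iii) the claim that the comparability of scaling factors on adjacent cells ``is exactly the condition ensuring that a deformation of this kind remains within a single quasisymmetry class'' is the entire content of the theorem in this direction, and it is asserted rather than proved --- verifying the three-point condition for the identity map $(X,g)\to(X,d_k)$ at all locations and all scales, with distortion depending only on $\eta$, is where the work of \cite{Ntalampekos:qs_approximation} lies. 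The construction there does proceed in the spirit you describe (triangulations of $(X,g)$ at dyadic mesh, with cell sizes dictated by $f$, as in the snowsphere), so your architecture is sound; but as written the proposal supplies a complete proof of the soft half and only a blueprint for the hard half.
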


We remark that Cattalani was able to build on the techniques of \cite{Ntalampekos:qs_approximation} and generalize the above result to arbitrary dimension \cite{Cattalani:qs}. Combining Theorem \ref{theorem:main:approximation} and Theorem \ref{theorem:smooth} one obtains the next corollary.

\begin{corollary}
Every $2$-dimensional quasisphere is the Gromov--Hausdorff limit of a sequence of $2$-dimensional smooth uniform quasispheres that satisfy \ref{smooth:llc-mod} and \ref{smooth:loewner} with uniform parameters. 
\end{corollary}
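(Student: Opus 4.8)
The plan is to feed the definition of a quasisphere directly into Theorem~\ref{theorem:main:approximation} and then apply Theorem~\ref{theorem:smooth} to each of the surfaces produced by that approximation; as the text indicates, the corollary is essentially a formal combination of these two results. So let $X$ be a $2$-dimensional quasisphere, and fix a quasisymmetric homeomorphism $f\colon\widehat\C\to X$ with distortion function $\eta$. First I would transport the metric of $X$ back to the sphere: let $d$ be the metric on $\widehat\C$ defined by $d(z,w)=d_X(f(z),f(w))$, which induces the standard topology and makes $f\colon(\widehat\C,d)\to(X,d_X)$ an isometry. Viewing $f$ instead as a map $(\widehat\C,\sigma)\to(X,d_X)$, where $\sigma$ is the spherical metric, shows that the identity map $(\widehat\C,\sigma)\to(\widehat\C,d)$ is $\eta$-quasisymmetric. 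Thus, taking $(X,g)=(\widehat\C,\sigma)$ (a compact Riemannian $2$-manifold without boundary) and $d_X=d$ in the statement, hypothesis~\ref{theorem:main:approximation:1} of Theorem~\ref{theorem:main:approximation} is satisfied.

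Next I would invoke Theorem~\ref{theorem:main:approximation} to obtain, for each $k\in\N$, a Riemannian metric $g_k$ on the sphere and a metric $d_k$ that is locally isometric to $d_{g_k}$, such that $(\widehat\C,d_k)$ converges to $(\widehat\C,d)$ in the Gromov--Hausdorff sense and $(\widehat\C,d_k)$ is $\eta'$-quasisymmetric to $(\widehat\C,\sigma)$ for a single distortion function $\eta'$ independent of $k$. In particular, each $(\widehat\C,d_k)$ is a quasisphere, and they form a uniform family of quasispheres. Since $(\widehat\C,d)$ is isometric to $(X,d_X)$ via $f$ and Gromov--Hausdorff distance is isometry-invariant, the sequence $(\widehat\C,d_k)$ also converges to $X$ in the Gromov--Hausdorff sense. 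It then remains only to upgrade ``uniformly quasisymmetric to the standard sphere'' to the conditions \ref{smooth:llc-mod} and \ref{smooth:loewner} with uniform parameters. For this I would first note that, since $d_k$ is locally isometric to the length metric $d_{g_k}$, the compact space $(\widehat\C,d_k)$ is readily seen to be a length space locally modeled on the Riemannian manifold $(\widehat\C,g_k)$, hence the length metric of a Riemannian metric $h_k$ on the sphere; in other words $(\widehat\C,d_k)$ is a genuine Riemannian $2$-sphere. Applying Theorem~\ref{theorem:smooth} to it: it satisfies condition~\ref{smooth:quasisphere} with data depending only on $\eta'$, so by the quantitative equivalence in that theorem it satisfies \ref{smooth:llc-mod} and \ref{smooth:loewner} with constants depending only on $\eta'$, and hence independent of $k$. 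This is precisely the assertion of the corollary.

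Once Theorems~\ref{theorem:main:approximation} and~\ref{theorem:smooth} are granted, there is no serious analytic content left, so I do not expect a genuine obstacle; the work is bookkeeping. The point that needs a little care is the quantitative thread: one must verify that the phrase ``uniformly quasisymmetric to $(X,g)$'' in Theorem~\ref{theorem:main:approximation} really furnishes one distortion function $\eta'$ valid for all $k$, and that this is exactly the datum entering the quantitative hypothesis of Theorem~\ref{theorem:smooth}, so that the resulting constants in \ref{smooth:llc-mod} and \ref{smooth:loewner} are honestly uniform in $k$. A secondary, purely formal, point is the identification of $(\widehat\C,d_k)$ with a bona fide Riemannian sphere rather than merely a locally Riemannian metric space, which is needed because Theorem~\ref{theorem:smooth} is phrased for Riemannian $2$-spheres; this is the elementary fact that a complete length metric locally isometric to a smooth Riemannian length metric is itself the length metric of a Riemannian metric.
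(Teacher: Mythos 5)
Your argument is correct and is exactly the paper's intended route: the paper offers no separate proof, stating the corollary as an immediate combination of Theorem \ref{theorem:main:approximation} (applied with $(X,g)=(\widehat\C,\sigma)$ and the pulled-back metric) and the quantitative equivalences of Theorem \ref{theorem:smooth}. Your attention to the quantitative thread and to the identification of $(\widehat\C,d_k)$ with a genuine smooth sphere is exactly the right bookkeeping.
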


\subsection{Uniformization of domains}\label{section:qs_surfaces}
We discuss extensions of the Bonk--Kleiner theorem to non-compact surfaces. The proofs of the results in this section rely on the Bonk--Kleiner theorem.

A metric space $(X,d)$ is \textit{linearly locally contractible} if there exists a constant $M\geq 1$ such that every ball $B(a,r)$ in $X$ with $0<r\leq \diam(X)/M$ is contractible inside $B(a,Mr)$. Every closed metric surface that is linearly locally contractible is $\llc$, quantitatively. The converse statement is also true, but it is not quantitative; see \cite{BonkKleiner:quasisphere}*{Lemma 2.5}.

Wildrick in \cite{Wildrick:parametrization} provides a uniformization result for simply connected metric surfaces that are Ahlfors $2$-regular and $\llc$, generalizing the Bonk--Kleiner theorem. Namely, he provides conditions that distinguish between surfaces that are quasisymmetric to the Euclidean $2$-sphere, the once-punctured $2$-sphere, the plane, the upper half-plane, and the unit disk. In \cite{Wildrick:structure} it is shown that locally Ahlfors $2$-regular and locally linearly locally contractible surfaces are locally quasisymmetric to the unit disk. 

More generally, Merenkov and Wildrick \cite{MerenkovWildrick:uniformization} studied the quasisymmetric uniformization of metric spaces that are homeomorphic to planar domains. They sought conditions so that such a metric space is quasisymmetric to a \textit{circle domain} in $\widehat \C$, i.e., a domain each of whose boundary components is a point or a circle. This problem is motivated by \textit{Koebe's conjecture}, which asserts that every domain in the Riemann sphere is conformally equivalent to a circle domain. The conjecture was verified for countably connected domains by He and Schramm \cite{HeSchramm:Uniformization}, but the general case remains open. Merenkov and Wildrick obtained the next result.

\begin{theorem}[\cite{MerenkovWildrick:uniformization}]\label{theorem:merenkovwildrick}
Let $(X,d)$ be an Ahlfors $2$-regular metric space homeomorphic to a finitely connected domain in $\widehat \C$. Then $(X,d)$ is quasisymmetrically equivalent to a circle domain if and only if $(X,d)$ is $\llc$ and its completion is compact. 
\end{theorem}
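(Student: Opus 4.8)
The plan is to handle the two implications separately, the substantial one being the ``if'' direction, which I would reduce to the Bonk--Kleiner theorem (Theorem \ref{theorem:bonk_kleiner}) together with He--Schramm's verification of Koebe's conjecture for finitely connected domains \cite{HeSchramm:Uniformization}. For the ``only if'' direction, suppose $f\colon\Omega\to X$ is quasisymmetric, where $\Omega\subset\widehat{\C}$ is a circle domain with the restriction of the spherical metric. The $\llc$ property is a quasisymmetric invariant, and a finitely connected circle domain is $\llc$ -- it is the sphere with finitely many round disks and points removed, hence in fact a uniform domain -- so $X$ is $\llc$. Moreover $\Omega$ is bounded and connected, so by the standard local H\"older continuity of quasisymmetric maps on connected spaces \cite{Heinonen:metric} the map $f$ is uniformly continuous; it therefore extends to a continuous surjection from the compact set $\overline{\Omega}$ onto the completion of $X$, which is consequently compact.

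For the ``if'' direction, let $D\subset\widehat{\C}$ be a finitely connected domain homeomorphic to $X$, with $n$ complementary components, let $\widehat{X}$ be the completion of $X$, and set $\partial X=\widehat{X}\setminus X$. The first task is to show that the $\llc$ hypothesis forces $\widehat{X}$ to be metrically and topologically tame: $\widehat{X}$ is a compact metric surface with boundary, homeomorphic to the sphere with $n$ pairwise disjoint closed Jordan regions or points removed, and each nondegenerate component of $\partial X$ is a Jordan curve which, being of bounded turning and doubling, is a quasicircle \cite{TukiaVaisala:qs_embeddings}. I would then cap off each nondegenerate boundary component $\gamma_i$ by gluing a round Euclidean disk to $\widehat{X}$ along a quasisymmetric parametrization of $\gamma_i$ by a circle; since the degenerate components of $\partial X$ are already points of $\widehat{X}$, the resulting space $\widehat{X}'$ is a metric sphere. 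The key technical lemma, proved by a local analysis near the glued circles, is that this capping preserves Ahlfors $2$-regularity and the $\llc$ property, quantitatively. Granting this, Theorem \ref{theorem:bonk_kleiner} yields a quasisymmetric homeomorphism $g\colon\widehat{X}'\to\widehat{\C}$.

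Restricting $g$ to $X$ gives a quasisymmetric embedding of $X$ onto a finitely connected domain $\Omega_0\subset\widehat{\C}$ whose nondegenerate complementary components are Jordan regions bounded by quasicircles (the quasisymmetric images of the $\gamma_i$), the remaining complementary components being points. By He--Schramm \cite{HeSchramm:Uniformization} there is a conformal map $\phi$ from $\Omega_0$ onto a circle domain $\Omega$. Since the boundary of $\Omega_0$ consists of quasicircles and points, a quasiconformal reflection argument shows that $\phi$ extends to a quasiconformal, hence quasisymmetric, self-homeomorphism of $\widehat{\C}$; in particular $\phi|_{\Omega_0}$ is quasisymmetric. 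Then $\phi\circ g|_X\colon X\to\Omega$ is the desired quasisymmetric homeomorphism onto a circle domain.

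The main obstacle is the metric-geometric core of the reduction in the previous two paragraphs: showing that $\widehat{X}$ is a surface with boundary whose nondegenerate boundary components are quasicircles, and that capping them preserves both Ahlfors $2$-regularity and the $\llc$ condition with controlled constants. Both points rely on exploiting the $\llc$ hypothesis near the ideal boundary -- bounded turning to exclude cusps and collapsed boundary components, and condition \ref{llc2} to exclude pinches -- while the degenerate complementary components require separate, though easier, bookkeeping throughout. The concluding conformal-to-quasiconformal upgrade across quasicircle boundaries is classical, but must be arranged so that the resulting homeomorphism is global on $\widehat{\C}$.
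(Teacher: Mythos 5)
The survey does not prove this theorem --- it is quoted from \cite{MerenkovWildrick:uniformization} --- so your proposal has to be judged on its own terms. The architecture you choose (complete $X$; recognize $\widehat X$ as a compact surface with boundary whose nondegenerate boundary components are quasicircles; fill the holes to get an Ahlfors $2$-regular, $\llc$ sphere; apply Theorem \ref{theorem:bonk_kleiner}; finish with the classical quasiconformal uniformization of a finitely connected domain bounded by quasicircles and points onto a circle domain) is the standard and correct strategy for the finitely connected case, and your ``only if'' direction is fine. (Minor point: He--Schramm is overkill here; Koebe himself settled the finitely connected Kreisnormierung.)

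There is, however, a genuine gap in your ``key technical lemma'': gluing a round Euclidean disk to $\widehat X$ along an \emph{arbitrary} quasisymmetric parametrization of $\gamma_i$ does \emph{not} preserve Ahlfors $2$-regularity. A quasisymmetric circle homeomorphism need not be bi-Lipschitz; if it contracts arcs superlinearly near a point, its inverse expands, and a ball $B(p,r)$ centered on the gluing locus then contains, on the disk side, the $r$-neighborhood of an arc of length $\gg r$, hence area $\gg r^2$. Concretely, glue two copies of $\overline{\D}$ along the quasisymmetric circle homeomorphism given in angular coordinates near $\theta=0$ by $\theta\mapsto \operatorname{sgn}(\theta)|\theta|^{\alpha}$ with $\alpha>1$ (suitably extended): at $p=\varphi(0)$ one gets $\mathcal H^2(B(p,r))\gtrsim r\cdot r^{1/\alpha}= r^{1+1/\alpha}\gg r^2$, so the glued sphere is not Ahlfors $2$-regular even though both pieces and the welding curve are as nice as possible. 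So the failure is caused purely by the choice of parametrization, and your ``local analysis near the glued circles'' cannot succeed without an extra input. To repair the lemma you must either choose $\varphi_i\colon S^1\to\gamma_i$ (with $S^1$ rescaled to diameter comparable to $\diam\gamma_i$) so that $\diam\varphi_i(A)\geq c\,\diam A$ for every arc $A$ --- a co-Lipschitz property that does not follow from quasisymmetry alone and has to be extracted from the bounded-turning/doubling structure of $\gamma_i$, e.g.\ from the Tukia--V\"ais\"al\"a construction \cite{TukiaVaisala:qs_embeddings} --- or else glue in a disk carrying a metric adapted to $\gamma_i$ rather than the Euclidean one. This repair, together with the (flagged but unproved) claim that the completion is a surface with boundary whose nondegenerate boundary components are quasicircles, is exactly where the substance of \cite{MerenkovWildrick:uniformization} lies, so as written the proposal leaves the hardest steps unestablished and states one of them in a form that is actually false.
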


The statement is quantitative in the sense that the distortion function of the quasisymmetric map may be chosen to depend on the constants associated to the assumptions on $X$ and on the ratio of the diameter of $X$ to the minimum distance between components of $\partial X$.  The result does not hold in the infinitely connected case as was shown in \cite{MerenkovWildrick:uniformization}*{Theorem 1.5}. 

Actually, the main result of \cite{MerenkovWildrick:uniformization} provides sufficient conditions for the uniformization of spaces $X$ homeomorphic to countably connected domains, but we do not state it here for the sake of brevity. We remark that the provided conditions are not quasisymmetrically invariant. Nevertheless, more recently, Rehmert \cite{Rehmert:thesis}*{Theorem 1.0.1} provided quasisymmetrically invariant sufficient conditions so that an Ahlfors $2$-regular and $\llc$ metric space that is homeomorphic to a countably connected domain is quasisymmetric to a circle domain. 

We do not discuss further the uniformization of domains, since this problem is related to another broad and well-studied topic, the uniformization of Sierpi\'nski carpets. The study of this topic is facilitated by a technical and powerful tool called \textit{transboundary modulus}, which was introduced by Schramm \cite{Schramm:transboundary}. We direct the reader to \cites{Bonk:uniformization, BonkMerenkov:rigidity, Ntalampekos:CarpetsThesis, Rehmert:thesis, HakobyanLi:qs_embeddings, Ntalampekos:uniformization_packing} for recent works on the uniformization of carpets and to \cites{HerronKoskela:QEDcircledomains, HeSchramm:Uniformization, Schramm:transboundary, HildebrandtMosel:uniformization, Bonk:square, SolyninVidanage:rectangular, Rajala:koebe, NtalampekosRajala:exhaustion, EsmayliRajala:quasitripod,  KarafylliaNtalampekos:gromov_hyperbolic, LiRajala:cofat} for some recent results on the uniformization of domains.




\subsection{Uniformization via weak metric doubling measures}\label{section:weakly_doubling_measures}

David and Semmes in \cites{DavidSemmes:weights, DavidSemmes:fractals} studied metric doubling measures, which are used to deform a given metric in a controlled way. Let $(X,d)$ be a metric space. A Borel measure $\mu$ on $X$ is a \textit{metric doubling measure of dimension $2$} if there exists a metric $q$ on $X$ and a constant $C\geq 1$ such that for all $x,y\in X$ we have
$$C^{-1}\mu(B_d(x,d(x,y)))^{1/2} \leq q(x,y)\leq C \mu(B_d(x,d(x,y)))^{1/2}.$$
The existence of such a measure implies that the identity map from $(X,d)$ onto $(X,q)$ is quasisymmetric. Lohvansuu, Rajala, and Rasimus \cite{LohvansuRajalaRasimus:qs_wmdm} introduce the notion of a weak metric doubling measure, which, roughly speaking, is required to satisfy only the left of the above inequalities. Then they use that notion to characterize quasispheres.

\begin{theorem}[\cite{LohvansuRajalaRasimus:qs_wmdm}]
A metric $2$-sphere $(X,d)$ is a quasisphere if and only if $X$ is $\llc$ and carries a weak metric doubling measure of dimension $2$.
\end{theorem}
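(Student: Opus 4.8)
The plan is to establish the two implications separately; the forward one is essentially formal, while the converse carries the weight of the argument. For \emph{necessity}, suppose $f\colon(\widehat{\C},\sigma)\to(X,d)$ is an $\eta$-quasisymmetric homeomorphism, $\sigma$ denoting the spherical metric. Since the $\llc$ condition is a quasisymmetric invariant and $\widehat{\C}$ is $\llc$, so is $X$. To exhibit a weak metric doubling measure of dimension $2$, I would push the spherical area $\lambda=\mathcal{H}^2_\sigma$ forward to $\mu=f_*\lambda$ and set $q(x,y)=\sigma(f^{-1}(x),f^{-1}(y))$, which is a metric on $X$. From the Ahlfors $2$-regularity of $(\widehat{\C},\sigma)$ one has $\lambda(B_\sigma(z,\sigma(z,w)))^{1/2}\asymp\sigma(z,w)$; transporting this through $f$, using the inclusion property of quasisymmetric maps and the quasisymmetry of $f$ and $f^{-1}$ (which makes $\mu$ a doubling measure on $X$), yields $q(x,y)\asymp\mu(B_d(x,d(x,y)))^{1/2}$. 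In particular $C^{-1}\mu(B_d(x,d(x,y)))^{1/2}\le q(x,y)$, so $\mu$ is a (two-sided, hence a fortiori weak) metric doubling measure of dimension $2$.

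For \emph{sufficiency}, assume $X$ is $\llc$ and carries a weak metric doubling measure $\mu$ of dimension $2$ with associated metric $q$ and constant $C$, so that $C^{-1}\mu(B_d(x,d(x,y)))^{1/2}\le q(x,y)$ for all $x,y$. Since $\mu$ is a doubling measure on the compact space $X$, it is finite, and $X$ is a doubling space. The idea is to run the modulus-based uniformization machinery on the metric measure space $(X,d,\mu)$---not on $(X,d,\mathcal{H}^2)$, which may have infinite area, as the snowsphere of Figure~\ref{figure:snowsphere} shows---using the auxiliary metric $q$ only as a tool. First I would use the weak metric doubling inequality to bound the $\mu$-modulus of curve families from above: a normalized $q$-length element provides admissible densities whose $\mu$-energy is controlled by the inequality $C^{-1}\mu(B_d(x,d(x,y)))^{1/2}\le q(x,y)$. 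This produces an upper bound on the product of the moduli of the two families of curves joining opposite sides of any topological quadrilateral in $X$, and, together with a vanishing condition for the $\mu$-modulus of curves through a point (also extracted from the weak metric doubling inequality), this is exactly the reciprocality condition for $(X,d,\mu)$. Hence a quasiconformal uniformization theorem in the spirit of Rajala \cite{Rajala:uniformization} and Ntalampekos--Romney \cite{NtalampekosRomney:nonlength}, in the form valid with $\mu$ as reference measure, produces a (weakly) quasiconformal homeomorphism $h\colon\widehat{\C}\to(X,d)$.

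It then remains to upgrade $h$ to a quasisymmetric homeomorphism, after which $X$ is by definition a quasisphere. The map $h$ transports the conformal ($\mu$-)modulus with controlled distortion, and since $\widehat{\C}$ is $2$-Loewner, $h$ carries this Loewner estimate over to lower bounds for the $\mu$-modulus of linking curve families in $X$, which play the role that Ahlfors $2$-regularity plays in the proof of the Bonk--Kleiner theorem (Theorem~\ref{theorem:bonk_kleiner}). Feeding these lower bounds, the upper $\mu$-volume control coming from the weak metric doubling inequality, and the $\llc$ hypothesis into the modulus-to-quasisymmetry mechanism behind Theorem~\ref{theorem:qc_qs}, one concludes that $h$ is quasisymmetric.

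The hard part is that a weak metric doubling measure is a genuinely one-sided object: it supplies an upper volume bound but no lower bound, so neither $(X,d)$ nor any obvious deformation of it is Ahlfors $2$-regular, and the Bonk--Kleiner theorem cannot be invoked as a black box. The delicate steps are (i) extracting the upper modulus estimates required for reciprocality from the single inequality $C^{-1}\mu(B_d(x,d(x,y)))^{1/2}\le q(x,y)$, which forces one to build competitor densities out of $q$ rather than $d$ and to track carefully the interaction of $q$, $d$, and $\mu$; (ii) carrying out the quasiconformal-to-quasisymmetric upgrade with only the $\llc$ condition and the Loewner data transported through $h$, in the absence of two-sided volume regularity; and (iii) ensuring that the uniformization input is available with $\mu$ in place of the Hausdorff $2$-measure. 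Reconciling these metrics and measures throughout is where the argument of \cite{LohvansuRajalaRasimus:qs_wmdm} must do real work.
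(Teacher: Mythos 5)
Your necessity direction is fine. The sufficiency direction, however, inverts the logic of the actual argument: it rejects as impossible precisely the route that \cite{LohvansuRajalaRasimus:qs_wmdm} takes, and replaces it with machinery that is not available in the form you need. The main element of the proof is to show that the $\llc$ hypothesis forces the \emph{reverse} inequality $q_{\mu,2}(x,y)\leq C\,\mu(B_{xy})^{1/2}$, i.e., that a weak metric doubling measure on an $\llc$ sphere is automatically a genuine two-sided metric doubling measure of dimension $2$. Once this is established, the deformed space $(X,q_{\mu,2})$ is Ahlfors $2$-regular (with respect to $\mathcal H^2_{q_{\mu,2}}\asymp\mu$), the identity map $(X,d)\to(X,q_{\mu,2})$ is quasisymmetric, $\llc$ is inherited by quasisymmetric invariance, and the Bonk--Kleiner theorem (Theorem \ref{theorem:bonk_kleiner}) \emph{is} invoked as a black box to produce a quasisymmetry from $\widehat{\C}$ onto $(X,q_{\mu,2})$, hence onto $(X,d)$. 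Your assertion that ``neither $(X,d)$ nor any obvious deformation of it is Ahlfors $2$-regular'' is exactly what the $\llc$ condition is used to refute; the one-sided inequality does supply the upper regularity bound for $(X,q_{\mu,2},\mu)$, and the chain construction defining $q_{\mu,2}$ together with $\llc$ supplies the lower one.

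Your proposed substitute---running the reciprocality and quasiconformal uniformization machinery on the metric measure space $(X,d,\mu)$---has two genuine gaps. First, the theorems of \cite{Rajala:uniformization} and \cite{NtalampekosRomney:nonlength} are proved for the Hausdorff $2$-measure; there is no version with an arbitrary doubling measure $\mu$ as reference measure, and the compatibility between $\mu$ and $d$-arclength that makes $2$-modulus behave conformally is exactly what a metric doubling measure is designed to restore by deforming the \emph{metric}, not the measure. Second, the derivation of the quadrilateral modulus bounds from the single inequality $C^{-1}\mu(B_{xy})^{1/2}\leq q_{\mu,2}(x,y)$ is asserted rather than carried out: that inequality bounds $q_{\mu,2}$ from \emph{below}, whereas building admissible densities for $d$-rectifiable curves requires comparing $q_{\mu,2}$-length to $d$-length from \emph{above}, which is precisely the missing reverse inequality. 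So the real work lies in the step you skipped, after which the remainder of the proof is an application of Bonk--Kleiner rather than a reconstruction of the uniformization theory.
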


We now give the formal definition of a weak metric doubling measure. Let $(X,d)$ be a metric space. A Borel measure $\mu$ on $X$ is \textit{doubling} if there exists a constant $C\geq 1$ such that
$$0<\mu(B(x,2r)) \leq C \mu (B(x,r))<\infty$$
for each $x\in X$ and $r>0$. For $x,y\in X$, let $B_{xy}=B(x,d(x,y))\cup B(y,d(x,y))$. Let $\mu$ be a doubling measure on $X$. For $s>0$, $x,y\in X$, and $\delta>0$, let
$$q_{\mu,s}^\delta(x,y)=\inf\left\{\sum_{j=1}^m \mu(B_{x_{j-1}x_j})^{1/s}\right\}$$
where the infimum is taken over all chains of points $x=x_0,x_1,\dots,x_m=y$, $m\in \N$, such that $d(x_{j-1},x_j)\leq \delta$ for $j\in \{1,\dots,m\}$. Then one defines
$$q_{\mu,s}(x,y)=\limsup_{\delta\to 0^+} q_{\mu,s}^\delta(x,y).$$
A doubling measure $\mu$ on $X$ is a \textit{weak metric doubling measure of dimension $s$} if there exists a constant $C\geq 1$ such that for all $x,y\in X$ we have
$$ C^{-1} \mu(B_{xy})^{1/s}\leq q_{\mu,s}(x,y).$$
The main element in the proof of the above theorem is to show that the $\llc$ condition implies that the reverse inequality is true when $s=2$. Eventually, the authors do resort to the Bonk--Kleiner theorem to obtain a quasisymmetry from $(X,q_{\mu,2})$ onto $\widehat \C$.

Weak doubling measures are also used by Rajala and Rasimus \cite{RajalaRasimus:qs_koebe} in the problem of quasisymmetric uniformization of metric spaces homeomorphic to a finitely connected domain. Specifically, they generalize the theorem of Merenkov and Wildrick (Theorem \ref{theorem:merenkovwildrick}).

\section{Quasiconformal uniformization}\label{section:qc}

\subsection{Motivation and examples}
Let $X$ be a metric surface of locally finite Hausdorff $2$-measure. Recall that for a set $G \subset X$ and disjoint subsets $E,F\subset G$ we denote by $\Gamma(E,F;G)$ the family of curves in $G$ joining $E$ and $F$. 
A \textit{(topological) quadrilateral} in $X$ is a closed Jordan region $Q$ together with a partition of $\partial Q$ into four edges $\zeta_1,\zeta_2,\zeta_3,\zeta_4\subset \partial Q$ enumerated in cyclic order that are non-overlapping, i.e., they can only intersect at the endpoints. When we refer to a quadrilateral $Q$, it will be implicitly understood that there exists such a partition of its boundary. We define $\Gamma(Q)=\Gamma(\zeta_1,\zeta_3;Q)$  and $\Gamma^*(Q) =\Gamma(\zeta_2,\zeta_4;Q)$. 

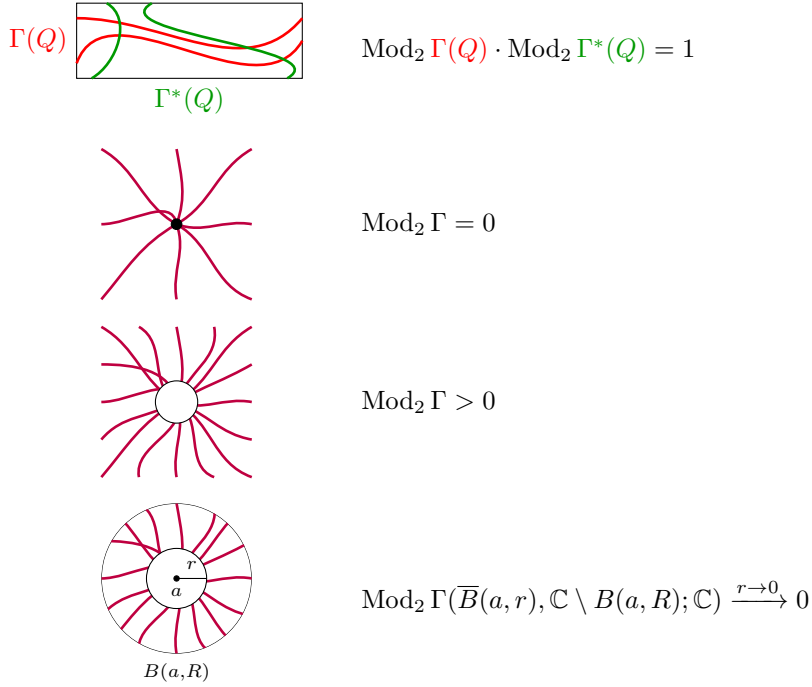
\begin{figure}
\begin{minipage}{0.37\textwidth}
\begin{tikzpicture}
\draw (0,0) rectangle (3,1);
\node[anchor=east,red] at (0,0.5) {$\Gamma(Q)$};
\draw[red, line width=1] (0,0.2) to[out=70, in=-120]  (3,0.5);
\draw[red, line width=1] (0,0.8) to[out=0, in=-130]  (3,0.8);

\node[anchor=north, color=green!60!black] at (1.5,0) {$\Gamma^*(Q)$};
\draw[color=green!60!black, line width=1] (0.2,0) to[out=30, in=-30] (0.4, 1);
\draw[color=green!60!black, line width=1] (2.8,0) to[out=30, in=-150] (1, 1);
\end{tikzpicture}
\end{minipage}
\begin{minipage}{0.62\textwidth}
$\Mod_2 \textcolor{red}{\Gamma(Q)}\cdot \Mod_2 \textcolor{green!60!black}{\Gamma^*(Q)} =1$

\end{minipage}

\bigskip

\begin{minipage}{0.37\textwidth}
\centering
\begin{tikzpicture}
\draw[color=purple, line width=1] (0,0) to[out=100,in=0] (-1,0);
\draw[color=purple, line width=1] (0,0) to[out=150,in=-30] (-1,1);
\draw[color=purple, line width=1] (0,0) to[out=70,in=-80] (0,1);
\draw[color=purple, line width=1] (0,0) to[out=30,in=-150] (1,1);
\draw[color=purple, line width=1] (0,0) to[out=-20,in=170] (1,0);
\draw[color=purple, line width=1] (0,0) to[out=-30,in=160] (1,-1);
\draw[color=purple, line width=1] (0,0) to[out=-70,in=100] (0,-1);
\draw[color=purple, line width=1] (0,0) to[out=-150,in=50] (-1,-1);
\draw[fill=black] (0,0) circle (2pt);
\end{tikzpicture}
\end{minipage}
\begin{minipage}{0.62\textwidth}
$\Mod_2\Gamma=0$
\end{minipage}

\bigskip

\begin{minipage}{0.37\textwidth}
\centering
\begin{tikzpicture}
\draw[color=purple, line width=1] (0,0) to[out=100,in=0] (-1,0);
\draw[color=purple, line width=1] (0,0) to[out=150,in=-30] (-1,1);
\draw[color=purple, line width=1] (0,0) to[out=70,in=-80] (0,1);
\draw[color=purple, line width=1] (0,0) to[out=30,in=-150] (1,1);
\draw[color=purple, line width=1] (0,0) to[out=-20,in=170] (1,0);
\draw[color=purple, line width=1] (0,0) to[out=-30,in=160] (1,-1);
\draw[color=purple, line width=1] (0,0) to[out=-70,in=100] (0,-1);
\draw[color=purple, line width=1] (0,0) to[out=-150,in=50] (-1,-1);

\draw[color=purple, line width=1] (0,0) to[out=100,in=0] (-1,0.5);
\draw[color=purple, line width=1] (0,0) to[out=150,in=-30] (-0.5,1);
\draw[color=purple, line width=1] (0,0) to[out=70,in=-80] (0.5,1);
\draw[color=purple, line width=1] (0,0) to[out=30,in=-150] (1,0.5);
\draw[color=purple, line width=1] (0,0) to[out=-20,in=170] (1,-0.5);
\draw[color=purple, line width=1] (0,0) to[out=-30,in=160] (0.5,-1);
\draw[color=purple, line width=1] (0,0) to[out=-70,in=100] (-0.5,-1);
\draw[color=purple, line width=1] (0,0) to[out=-150,in=50] (-1,-0.5);
\draw[fill=white] (0,0) circle (8pt);
\end{tikzpicture}
\end{minipage}
\begin{minipage}{0.62\textwidth}
$\Mod_2\Gamma>0$
\end{minipage}

\bigskip

\begin{minipage}{0.37\textwidth}
\centering
\begin{tikzpicture}
\node[below] at (0,-1) {$\scriptstyle B(a,R)$};
\clip (0,0) circle (1cm);
\draw (0,0) circle (1cm);

\draw[color=purple, line width=1] (0,0) to[out=100,in=0] (-1,0);
\draw[color=purple, line width=1] (0,0) to[out=150,in=-30] (-1,1);
\draw[color=purple, line width=1] (0,0) to[out=70,in=-80] (0,1);
\draw[color=purple, line width=1] (0,0) to[out=30,in=-150] (1,1);
\draw[color=purple, line width=1] (0,0) to[out=-20,in=170] (1,0);
\draw[color=purple, line width=1] (0,0) to[out=-30,in=160] (1,-1);
\draw[color=purple, line width=1] (0,0) to[out=-70,in=100] (0,-1);
\draw[color=purple, line width=1] (0,0) to[out=-150,in=50] (-1,-1);

\draw[color=purple, line width=1] (0,0) to[out=100,in=0] (-1,0.5);
\draw[color=purple, line width=1] (0,0) to[out=150,in=-30] (-0.5,1);
\draw[color=purple, line width=1] (0,0) to[out=70,in=-80] (0.5,1);
\draw[color=purple, line width=1] (0,0) to[out=30,in=-150] (1,0.5);
\draw[color=purple, line width=1] (0,0) to[out=-20,in=170] (1,-0.5);
\draw[color=purple, line width=1] (0,0) to[out=-30,in=160] (0.5,-1);
\draw[color=purple, line width=1] (0,0) to[out=-70,in=100] (-0.5,-1);
\draw[color=purple, line width=1] (0,0) to[out=-150,in=50] (-1,-0.5);
\draw[fill=white] (0,0) circle (0.4cm);
\draw[fill=black] (0,0)node[below]{$\scriptstyle a$} circle (1pt) -- (0.4,0) node[pos=0.5,anchor=south] {$\scriptstyle r$};

\end{tikzpicture}
\end{minipage}
\begin{minipage}{0.62\textwidth}
$\displaystyle{\Mod_2\Gamma(\br B(a,r), \C \setminus B(a,R);\C) \xrightarrow[]{r\to 0} 0}$
\end{minipage}

\caption{Elementary properties of modulus in the plane.}\label{fig:modulus_plane}
\end{figure}

In the case of a quadrilateral $Q$ in the plane, we always have (see \cite{LehtoVirtanen:quasiconformal}*{\S I.4})
$$\Mod_2\Gamma(Q) \cdot \Mod_2\Gamma^*(Q) =1.$$
Moreover, for every $a\in \C$ and $0<r<R$ we have (see \cite{LehtoVirtanen:quasiconformal}*{I.6})
\begin{align*}
\Mod_2\Gamma(\overline B(a,r), \C\setminus B(a,R); \C)= 2\pi \left(\log\frac{R}{r}\right)^{-1}.
\end{align*}
In particular, this quantity converges to $0$ as $r\to 0$. As a consequence, the family of non-constant curves passing through a given point of $\C$ has modulus zero. See Figure \ref{fig:modulus_plane} for an illustration of the elementary properties of modulus in the plane.

Rajala \cite{Rajala:uniformization} studied the problem of quasiconformal uniformization of metric surfaces. 

\begin{definition}\label{definition:qc}
Let $X,Y$ be metric surfaces of locally finite Hausdorff $2$-measure. A homeomorphism $f\colon X\to Y$ \textit{quasiconformal} if there exists $K\geq 1$ such that for every family of curves $\Gamma$ in $X$ we have
$$K^{-1}\Mod_2\Gamma \leq \Mod_2 f(\Gamma) \leq K\Mod_2\Gamma.$$
Here $f(\Gamma)$ denotes the family of curves $\{f\circ \gamma: \gamma\in \Gamma\}$. In that case, $f$ is called $K$-quasiconformal. 
\end{definition}

\begin{remark}
There are three competing definitions of quasiconformality: the geometric, given above in Definition \ref{definition:qc}, the analytic, given below in Theorem \ref{theorem:definitions_qc}, and the metric. Although the geometric and analytic definitions are always equivalent on metric surfaces of locally finite Hausdorff $2$-measure thanks to Theorem  \ref{theorem:definitions_qc}, this is not the case with the metric definition. Sufficient conditions for the equivalence of the metric definition with the other definitions were provided by Heinonen and Koskela \cite{HeinonenKoskela:qc}. 

However, it was observed by Rajala, Rasimus, and Romney \cite{RajalaRasimusRomney:uniformization}*{Lemma 5.5} that the existence of a metrically quasiconformal parametrization of a surface by Euclidean space imposes some strong geometric restrictions, like \textit{infinitesimal linear local connectedness}. In addition, in \cite{RajalaRasimusRomney:uniformization}*{Section 5}, one can find examples of surfaces with good geometry (e.g., satisfying the upper bound of Ahlfors $2$-regularity or the $\llc$ condition) that admit geometrically quasiconformal parametrizations, but do not admit metrically quasiconformal ones by Euclidean space. 

The line of research discussed in this survey asks for minimal conditions for the parametrization of metric surfaces by Euclidean space (recall Question \ref{question:minimal}). In Section \ref{section:wqc} we present a result that allows the \textit{weakly} quasiconformal parametrization, according to the geometric definition using modulus, of \textit{any} surface of locally finite Hausdorff $2$-measure by Euclidean space, without imposing any further geometric condition. It is clear, therefore, that the ``right" definition of quasiconformality in our setting is the geometric, or equivalently, the analytic one. 
\end{remark}

In order for a surface to be quasiconformally equivalent to the plane it is necessary that modulus on that surface satisfies the conditions described in Figure \ref{fig:modulus_plane} up to constants. Rajala observed the following two phenomena, which obstruct quasiconformal parametrization of a surface by the plane.

\begin{example}\label{example:collapse}
Let $B$ be a closed ball in $\C$ and let $X$ be the quotient metric space obtained from $\C$ by identifying points in $B$, while all other points of $\C$ have trivial equivalence classes. See Figure \ref{figure:collapse} for an illustration of the space $X$, assuming it is embedded in Euclidean $3$-dimensional space. The space $X$ is equipped with the quotient metric. The ball $B$ projects to a point $p\in X$. The space $X$ is homeomorphic to $\C$ and in fact locally isometric to $\C$ away from the point $p$. Moreover, $X$ has locally finite Hausdorff $2$-measure, since the natural projection from $\C$ onto $X$ is $1$-Lipschitz. It can be shown that the $2$-modulus of the family of non-constant curves in $X$ passing through the point $p$ is non-zero; this is because the modulus of non-constant curves passing through the ball $B$ in $\C$ is non-zero. On the other hand, in the complex plane the modulus of non-constant curves passing through a point is always zero. Therefore, there is an obstruction to the existence of a quasiconformal map from a subset of $\C$ onto $X$. 

There exists a much more involved example \cite{NtalampekosRomney:nonlength}*{Example 8.4}  of a metric surface $X$ of locally finite Hausdorff $2$-measure with the following property. There is a non-degenerate continuum $E\subset X$ such that for each $x\in E$ the conformal modulus of non-constant curves passing through $x$ is non-zero. 
\end{example}

\begin{figure}
\includegraphics[scale=.5]{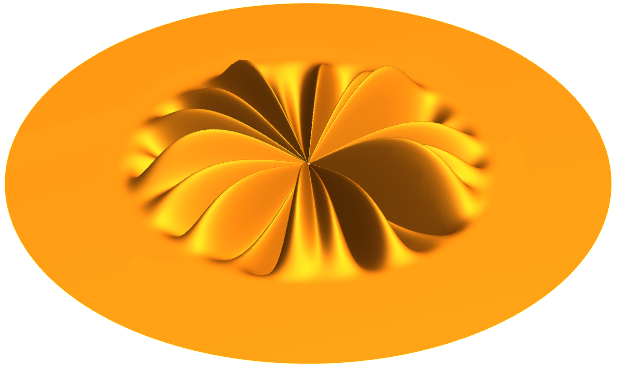}
\caption{A surface obtained by collapsing a ball in the plane to a point. One can think of cutting a round hole in a piece of fabric and then sewing the hole.}\label{figure:collapse}
\end{figure}

\begin{example}\label{example:cantor}
Let $E\subset \C$ be a totally disconnected compact set. For $z,w\in \C$, define
$$d(z,w)= \inf_\gamma \int_\gamma \chi_{\C\setminus E}\, ds$$
where the infimum is taken over all rectifiable curves $\gamma$ in $\C$ joining $z$ and $w$. It is elementary to show that $(\C,d)$ is a length space with locally finite Hausdorff $2$-measure and that the identity map from $(\C,|\cdot |)$ onto $(\C,d)$ is a $1$-Lipschitz homeomorphism that is a local isometry in $\C\setminus E$; see \cite{NtalampekosRomney:length}*{Example 8.3} for an argument. Rajala proved that if $E$ has positive Lebesgue measure, then there exists no quasiconformal homeomorphism between $(\C,d)$ and a subset of the plane \cite{Rajala:uniformization}*{Example 2.1}. 
See also \cite{IkonenRomney:removable}*{Remark 3.13} for a generalization of this construction. We provide the details of Rajala's argument here. 

We first show that $\mathcal H^2_d(E)=0$. Let $\varepsilon \in (0,1)$. Let $x\in E$ be a Lebesgue density point of $E$. For all sufficiently small $r>0$ we have $|Q(x,r)\setminus  E|< \varepsilon^2 |Q(x,r)|$, where $Q(x,r)$ denotes the square of side length $r$ that is centered at $x$ and has sides parallel to the coordinate axes. For each rectangle $R\subset Q$ with sides parallel to those of $Q$ and with dimensions $\varepsilon r$ and $r$, by an application of Fubini's theorem, we may find a line segment $\gamma$ of length $r$ joining opposite sides of $R$ such that 
$$\int_{\gamma} \chi_{\C\setminus E}\, ds < \varepsilon r.$$
This implies that any two points of $Q(x,r)$ can be joined by a polygonal curve $\gamma$ satisfying
$$\int_{\gamma} \chi_{\C\setminus E}\, ds < 4\varepsilon r.$$
Therefore, for each density point $x\in E$ and for all small $r>0$ we have
\begin{align}\label{example:cantor:ineq}
\diam_d(B(x,r/2))\leq \diam_d(Q(x,r)) \leq 4\varepsilon r.
\end{align}

Suppose that $E$ is contained in an open ball $B_0$ and let $\delta>0$. By the basic covering theorem \cite{Heinonen:metric}*{Theorem 1.2}, we can cover the Lebesgue density points $D$ of $E$ by countably many balls $B(x_i,r_i/2)\subset B_0$, $r_i<\delta$, $i\in \N$, satisfying the inequality \eqref{example:cantor:ineq} and so that the balls $B(x_i,r_i/10)$, $i\in \N$, are disjoint. We have
$$\sum_{i\in \N} \diam_d(B(x_i,r_i/2))^2 \leq \sum_{i\in \N}16\varepsilon^2r^2_i = C \varepsilon^2\sum_{i\in \N} |B(x_i,r_i/10)|\leq C \varepsilon^2\mathcal |B_0|,$$
where $C=1600\pi^{-1}$. As $\delta\to 0$, we obtain $\mathcal H^2_d(D)\leq C\varepsilon^2\mathcal |B_0|$. As $\varepsilon\to 0$, we have $\mathcal H^2_d(D)=0$. Since the identity map from $(\C,|\cdot|)$ onto $(\C,d)$ is $1$-Lipschitz, we also have $\mathcal H^2_d(E\setminus D)=|E\setminus D|=0$. This completes the proof of the claim. Furthermore, given that the identity map from $(\C,|\cdot|)$ onto $(C,d)$ is locally isometric in $\C\setminus E$, we have
\begin{align}\label{example:cantor:measure}
\int g\, d\mathcal H^2_d= \int g \chi_{\C\setminus E}\, d\mathcal H^2 
\end{align}
for each Borel function $g\colon \C\to [0,\infty]$; here the latter integration is with respect to Lebesgue measure.

Let $Q\subset \C$ be an arbitrary closed square with sides parallel to the coordinate axes. Let $\Gamma(Q)$ (resp.\ $\Gamma^*(Q)$) be the family of curves in $Q$ joining the horizontal (resp.\ vertical) sides of $Q$. We will estimate the modulus $\Mod_2\Gamma(Q)$ in the metric $d$ from below. Let $\rho\colon \C\to [0,\infty]$ be a Borel function that is admissible for $\Gamma(Q)$. That is, 
$$\int_{\gamma} \rho\, ds_d\geq 1$$
for all $\gamma\in \Gamma(Q)$; here $ds_d$ denotes the length element in the space $(\C,d)$. By \cite{NtalampekosRomney:length}*{Proposition 8.1 (iii)}, the above integral is equal to $\int_{\gamma} \rho \chi_{\C\setminus E}\, ds$ whenever the Euclidean length of $\gamma$ is finite. Assuming that the side length of $Q$ is $r$ and integrating over all horizontal segments in $\Gamma(Q)$, we obtain
$$ r \leq \int \rho \chi_{{Q\setminus E}}\, d\mathcal H^2\leq \left(\int \rho^2\chi_{Q\setminus E} \, d\mathcal H^2 \right)^{1/2} \mathcal |Q\setminus E|^{1/2}= \left(\int \rho^2 \, d\mathcal H^2_d\right)^{1/2} |Q\setminus E|^{1/2},$$
where the last equality follows form \eqref{example:cantor:measure}.
Therefore, 
\begin{align*}
\Mod_2\Gamma(Q) \geq \frac{|Q|}{|Q\setminus E|}.
\end{align*}
By symmetry, the same estimate is true for $\Mod_2\Gamma^*(Q)$, so
$$\Mod_2\Gamma(Q)\cdot \Mod_2\Gamma^*(Q) \geq \frac{|Q|^2}{|Q\setminus E|^2}.$$
As $Q$ shrinks to a Lebesgue density point of $E$, the above product of moduli converges to $\infty$. On the other hand, if $(\C,d)$ were quasiconformally equivalent to a planar domain, then that product would have to stay uniformly bounded; see the first property that is illustrated in Figure \ref{fig:modulus_plane}. Therefore, there exists no such quasiconformal map.
\end{example}

\subsection{Rajala's theorem}

Rajala \cite{Rajala:uniformization} introduced the notion of a reciprocal surface by excluding the phenomena described in the above examples.

\begin{definition}
A metric surface $X$ of locally finite Hausdorff $2$-measure is \textit{reciprocal} if there exists a constant $\kappa \geq 1$ such that
\begin{align}\label{ireciprocality:12}
    \kappa^{-1}\leq \Mod_2 \Gamma(Q) \cdot \Mod_2\Gamma^*(Q) \leq \kappa \quad \textrm{for each quadrilateral $Q\subset X$}
\end{align}
and 
\begin{align}\label{ireciprocality:3}
        &\lim_{r\to 0} \Mod_2 \Gamma( \br B(a,r), X\setminus B(a,R);X )=0 \quad \textrm{for each ball $B(a,R)$.} 
\end{align}
A metric surface $X$ is \textit{locally reciprocal} if each point has an open neighborhood that is reciprocal. 
\end{definition}

See also Theorem \ref{theorem:reciprocal_simplify} below for a simplification of this definition. The main result of Rajala \cite{Rajala:uniformization} shows that reciprocal surfaces are precisely those admitting a quasiconformal parametrization by Euclidean space.

\begin{theorem}[Rajala's uniformization theorem]\label{theorem:rajala}

Let $X$ be a simply connected metric surface without boundary that has locally finite Hausdorff $2$-measure and is reciprocal. If $X$ is compact, then there exists a quasiconformal homeomorphism from $\widehat{\C}$ onto $X$ and if $X$ is not compact, then there exists a quasiconformal homeomorphism from either $\D$ or $\C$ onto $X$.
\end{theorem}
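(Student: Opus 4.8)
The plan is to isolate a local statement about quadrilaterals, prove it by adapting the classical method of extremal length to the metric setting, and then globalize by exhaustion. First I would aim to prove the following: every reciprocal metric surface $Y$ homeomorphic to a closed square, with $\partial Y$ partitioned into four edges $\zeta_1,\zeta_2,\zeta_3,\zeta_4$ in cyclic order, admits a quasiconformal homeomorphism onto a Euclidean rectangle that carries the four edges onto the four sides, with distortion controlled by the reciprocality constant $\kappa$. Granting this, the given simply connected surface $X$, which is a topological sphere if compact and a topological plane otherwise, would be exhausted by an increasing sequence of closed Jordan regions $Q_n$, each viewed as a quadrilateral and inheriting reciprocality with a controlled constant (a point that itself requires care about the relation between the subspace and intrinsic metrics). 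The quadrilateral statement then provides uniformly $K$-quasiconformal homeomorphisms $h_n\colon R_n\to Q_n$ from Euclidean rectangles; after affinely normalizing the $R_n$ and fixing the images of three boundary points, I would extract a subsequential limit $h$ using the equicontinuity and normal-family properties of $K$-quasiconformal homeomorphisms, and check that $h$ is a $K$-quasiconformal homeomorphism from a model domain onto $X$. The model, namely $\widehat{\C}$ in the compact case (reached by covering $X$ with two quadrilaterals) and $\D$ or $\C$ otherwise, is forced by the conformal type, that is, by whether the conformal moduli of the $Q_n$ stay bounded.

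For the quadrilateral case I would run the extremal-length construction. Let $\Gamma=\Gamma(\zeta_1,\zeta_3;Y)$ and $\Gamma^*=\Gamma(\zeta_2,\zeta_4;Y)$, and let $\rho$ and $\rho^*$ be the respective extremal admissible densities, which are unique by convexity and normalized so that $\int_\gamma\rho\,ds\ge 1$ on $\Gamma$ and $\int_\gamma\rho^*\,ds\ge 1$ on $\Gamma^*$. Define
$$u(x)=\inf_\gamma\int_\gamma\rho\,ds,\qquad v(x)=\inf_\gamma\int_\gamma\rho^*\,ds,$$
the infima taken over curves in $Y$ joining $x$ to $\zeta_1$, respectively to $\zeta_2$. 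One checks that $\rho$ is an upper gradient of $u$, that $u$ vanishes on $\zeta_1$, and, crucially using condition \eqref{ireciprocality:3} (which forces the modulus of the curves through any point to vanish and thus prevents $\rho$ from carrying concentrated mass), that $u$ is continuous, takes values in $[0,1]$, equals $1$ on $\zeta_3$, and has level sets $\{u=t\}$ that are, for $0<t<1$, continua separating $\zeta_1$ from $\zeta_3$; symmetric statements hold for $v$. After a suitable affine rescaling of the second coordinate, the candidate uniformizing map is $f=(u,v)$, which maps $Y$ into a rectangle $R$.

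The main obstacle is showing that $f$ is a homeomorphism onto $R$ respecting the sides; surjectivity and openness then follow from invariance of domain together with the boundary behavior. Injectivity admits no measure-theoretic shortcut, since a priori $Y$ may collapse sets of positive area, and it must be argued topologically: if $f(x)=f(y)$ with $x\neq y$, then $x$ and $y$ lie on a common level set of $u$ and on a common level set of $v$, and the separation properties of these level sets, combined with condition \eqref{ireciprocality:3}, force $u$ or $v$ to be constant on a nondegenerate continuum, which through condition \eqref{ireciprocality:12} makes $\Mod_2\Gamma\cdot\Mod_2\Gamma^*$ degenerate, a contradiction. This is the heart of the argument and the one place where reciprocality is used in full. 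Once $f$ is a homeomorphism onto $R$, the upper modulus bound $\Mod_2 f(\Sigma)\le K\Mod_2\Sigma$ for an arbitrary curve family $\Sigma$ in $Y$ follows from a chain-rule comparison of admissible densities on $R$ with admissible densities on $Y$ through the upper gradients $\rho$ and $\rho^*$, using the pointwise relation between $\rho\rho^*$ and the area distortion of $f$; the reverse bound $\Mod_2 f(\Sigma)\ge K^{-1}\Mod_2\Sigma$ is obtained by duality, playing this estimate against the identity $\Mod_2 f(\Gamma)\cdot\Mod_2 f(\Gamma^*)=1$ valid in the rectangle $R$ and against the two-sided bound \eqref{ireciprocality:12}, and propagating from the pair $\Gamma,\Gamma^*$ to general curve families by the usual subdivision and comparison arguments. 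The constant $K$ then depends only on $\kappa$, and feeding this back into the exhaustion of the first step completes the proof.
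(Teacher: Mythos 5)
Your overall architecture is the same as Rajala's actual proof, as outlined in Section \ref{section:qc}: uniformize each quadrilateral onto a Euclidean rectangle by a pair of ``harmonic coordinates'' built from an extremal density, then exhaust $X$ by quadrilaterals, pass to a normalized limit, and read off the model space from the moduli of the exhausting quadrilaterals. The first coordinate $u(x)=\inf_\gamma\int_\gamma\rho\,ds$ is exactly Rajala's.

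Within that architecture, however, the steps you compress are precisely where the content lies, and two of them do not hold up as written. First, you define the conjugate coordinate $v$ symmetrically as the $\rho^*$-distance to $\zeta_2$, where $\rho^*$ is the extremal density of $\Gamma^*$. Rajala instead defines $v$ by integrating the single density $\rho$ along the level sets of $u$ (a flux definition), precisely so that the conjugacy of the pair --- monotonicity of $v$ along each level set of $u$ and surjectivity onto $[0,1]\times[0,M]$ --- is built into the construction. With your definition these facts require a pointwise duality between $\rho$ and $\rho^*$, which is essentially the ``variational modulus'' duality that the survey flags as the most technical part of the proof; it cannot be assumed. Second, your injectivity argument does not close: the assertion that two points with equal image ``force $u$ or $v$ to be constant on a nondegenerate continuum'' is vacuous, since $u$ is constant on every level set and these are nondegenerate continua by construction. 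What must be shown is that a level set of $u$ and a level set of $v$ meet in exactly one point, and Rajala's route to this is a change-of-variables identity in which $\rho^2$ plays the role of the Jacobian of $f$, not a purely topological separation argument feeding into \eqref{ireciprocality:12}. Similarly, the inequality $\Mod_2\Sigma\le K\Mod_2 f(\Sigma)$ is not a formal chain-rule comparison; it requires proving that a fixed multiple of $\rho$ is a $2$-weak upper gradient of the two-component map $f=(u,v)$, which again goes through variational modulus and uses reciprocality essentially. So the plan is the right one, but the conjugacy of $v$, the injectivity of $f$, and the Sobolev regularity of $f$ are genuine gaps as presented.
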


The proof of Rajala is entirely self-contained and involves the construction of the desired quasiconformal map from scratch without resorting to any other uniformization result.

\begin{proof}[Outline of proof]
Rajala proves that each quadrilateral $Q=Q(\zeta_1,\zeta_2,\zeta_3,\zeta_4)\subset X$ can be mapped quasiconformally to a rectangle in the plane. Then he uses a standard exhaustion argument to map the space $X$ onto $\widehat{\C}$, $\C$, or $\D$. We describe the steps for the uniformization of a quadrilateral $Q$.

The first step in the proof is to find a Borel function $\rho\colon Q\to [0,\infty]$ that is \textit{weakly} admissible for $\Gamma(Q)$ and such that $\int \rho^2\, d\mathcal H^2=\Mod_2\Gamma(Q)\eqqcolon M$. Using $\rho$, one defines a function $u$ on $Q$ by the formula (disregarding some technicalities)
$$u(x)= \inf_{\gamma_x} \int_{\gamma_x} \rho\, ds$$
where the infimum is taken over all rectifiable curves $\gamma_x$ joining $\zeta_1$ to $x$. The function $u$ is continuous, satisfies $u|_{\zeta_1}=0$, $u|_{\zeta_3}=1$, lies in the Sobolev space $N^{1,2}(Q)$ with $\rho$ as a $2$-weak upper gradient, and satisfies the maximum principle. In fact, if $Q$ were a subset of $\C$ then the function $u$ would be a harmonic function equal to the real part of a conformal map $f$ from $Q$ onto the rectangle $[0,1]\times [0,M]$. We remark that in the subsequent work  \cite{RajalaRomney:reciprocal} it is shown that even without the reciprocity assumption on $X$ the function $u$ exists and has the above properties. 

The next step is to define a ``harmonic conjugate" function $v\colon Q\to [0,M]$, by integrating $\rho$ over the level sets of $u$, which are shown to be Jordan arcs connecting $\zeta_2$ and $\zeta_4$. The study of the level sets of $u$ in the paper of Rajala is technical, but there have been some recent developments that provide very general results for the level sets of Sobolev functions on surfaces \cites{Ntalampekos:monotone, EsmayliIkonenRajala:coarea, MeierNtalampekos:rigidity}.

Then properties of $v$ analogous to those of $u$ are established. However, unlike $u$, the definition of $v$ and its properties rely crucially on the reciprocity assumption. The next step is to define the map $f=(u,v)\colon Q\to [0,1]\times [0,M]$ and show that it is a homeomorphism. The proof of the last claim utilizes a change of variables formula for $f$, where $\rho^2$ plays the role of the Jacobian.

One of the most technical parts of the paper is to establish the Sobolev regularity of $f$, and specifically to show that a certain multiple of $\rho$ is a $2$-weak upper gradient of $f$. This is achieved through the study of a modification of conformal modulus, called variational modulus, which is exactly dual to conformal modulus. This step also depends crucially on the reciprocity assumption.  

The Sobolev regularity of $f$ in turn implies that there exists a constant $K\geq 1$ such that $\Mod_2\Gamma \leq K \Mod_2 f(\Gamma)$ for every curve family $\Gamma$ in $Q$.
Finally, it is shown that this inequality implies an analogous reverse inequality, yielding the quasiconformality of $f$.
\end{proof}

Romney \cite{Romney:reciprocal} (see also \cite{Rajala:uniformization}*{Section 14}) proved that if there exists such a quasiconformal map, then it can be taken to satisfy the inequalities
\begin{align}\label{qc:optimal}
\frac{\pi}{4} \Mod_2\Gamma\leq \Mod_2 f(\Gamma) \leq \frac{\pi}{2} \Mod_2\Gamma.
\end{align}
These are the best possible constants and are attained by the identity map from $(\C,|\cdot|)$ onto $(\C, |\cdot|_{\infty})$; see \cite{Rajala:uniformization}*{Example 2.2}. 

The theorem of Rajala was generalized to arbitrary surfaces by Ikonen \cite{Ikonen:isothermal}. The formulation of Ikonen's result below is taken from \cite{NtalampekosRomney:nonlength}*{Theorem 1.6}.

\begin{theorem}\label{theorem:reciprocal_local}
Let $X$ be a metric surface (with boundary) of locally finite Hausdorff $2$-measure. Then there exists a complete Riemannian surface $(Z,g)$ of constant curvature that is homeomorphic to $X$ and a quasiconformal map $f\colon Z \to X$ if and only if
X is locally reciprocal. In that case, $f$ can be taken to satisfy \eqref{qc:optimal}.
\end{theorem}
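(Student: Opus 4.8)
The plan is to derive both directions from two pillars: Rajala's quasiconformal uniformization of reciprocal surfaces, Theorem \ref{theorem:rajala}, for the local picture, and the classical uniformization theorems, Theorems \ref{theorem:uniformization_classical} and \ref{theorem:uniformization_riemannian}, for the global one. For the ``only if'' direction I would show that local reciprocity is a quasiconformal invariant. A smooth Riemannian surface of constant curvature is locally bi-Lipschitz to planar domains, and in the plane the reciprocity conditions \eqref{ireciprocality:12} and \eqref{ireciprocality:3} hold with absolute constants by the explicit computations recalled above, namely $\Mod_2\Gamma(Q)\cdot\Mod_2\Gamma^*(Q)=1$ and $\Mod_2\Gamma(\overline B(a,r),\C\setminus B(a,R);\C)=2\pi(\log(R/r))^{-1}\to 0$; since bi-Lipschitz maps distort $\Mod_2$ by a bounded factor, $(Z,g)$ is locally reciprocal. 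A $K$-quasiconformal map multiplies $\Mod_2$ of every curve family by a factor in $[K^{-1},K]$, so all quantities entering the definition of local reciprocity are preserved up to controlled constants; hence $X$ is locally reciprocal as well.

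For the ``if'' direction I would first make two standard reductions: passing to the orientation double cover if $X$ is non-orientable, and passing to the metric double $\widehat X$ of $X$ across $\partial X$ if $\partial X\neq\emptyset$. In both cases the resulting surface is locally reciprocal, of locally finite Hausdorff $2$-measure, and carries an isometric involution whose fixed-point set is the relevant locus; a parametrization made equivariant under this involution then descends to the desired $f$, so I may assume $X$ is orientable and without boundary. Next I would build a quasiconformal atlas on $X$. Every point lies in a reciprocal open set $N$, and running the construction from the proof of Theorem \ref{theorem:rajala} inside $N$ — which parametrizes individual small quadrilaterals, equivalently small topological disks, quasiconformally onto planar rectangles with distortion bounded by a universal constant as in \eqref{qc:optimal} — produces a quasiconformal chart $\psi_i\colon U_i\to V_i\subset\C$ near that point. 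Extracting a countable subcover yields an atlas $\{(U_i,\psi_i)\}$ of $X$ whose transition maps $\psi_j\circ\psi_i^{-1}$, being compositions of quasiconformal homeomorphisms between planar domains, are $K_0$-quasiconformal homeomorphisms between planar domains for a universal $K_0$.

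The final step is to globalize. In dimension two, a topological surface equipped with an atlas having quasiconformal transition maps carries a compatible conformal structure: one solves the Beltrami equation in each chart, using the measurable Riemann mapping theorem, and patches the local solutions, obtaining the measurable analogue of the isothermal coordinates of Theorem \ref{theorem:isothermal}. This gives a Riemann surface $Z$ homeomorphic to $X$ together with a homeomorphism $F\colon Z\to X$ that is quasiconformal when read in conformal charts of $Z$ and the charts $\psi_i$; equipping $Z$ with any compatible Riemannian metric and using the uniform distortion of the $\psi_i$ turns $F$ into a quasiconformal map of metric surfaces. By Theorem \ref{theorem:uniformization_riemannian} I may replace that metric by a complete constant-curvature metric $g$ in the same conformal class, and since $\Mod_2$ is conformally invariant in dimension two this change is $1$-quasiconformal; thus $f:=F\colon (Z,g)\to X$ is quasiconformal. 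The sharp bounds \eqref{qc:optimal} are inherited because the only non-conformal ingredient of $f$ is Rajala's construction together with Romney's optimization, the remaining maps preserving $\Mod_2$ exactly.

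I expect the crux to be this last step — upgrading a local quasiconformal atlas to a global conformal structure together with a quasiconformal comparison map. For simply connected $X$ it is unnecessary, since Theorem \ref{theorem:rajala} applies directly; but for general surfaces it genuinely requires the two-dimensional measurable Riemann mapping theorem and a careful gluing, a phenomenon with no analogue in higher dimensions. Secondary technical points are ensuring that the boundary and orientation doublings preserve local reciprocity, and verifying that the local pieces fed into Rajala's construction really satisfy the reciprocity hypotheses so that the construction applies with uniform constants.
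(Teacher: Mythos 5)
The survey does not prove Theorem \ref{theorem:reciprocal_local}; it cites it (Ikonen \cite{Ikonen:isothermal}, in the formulation of \cite{NtalampekosRomney:nonlength}). Your overall architecture — quasiconformal invariance of local reciprocity for the ``only if'' direction, and for the ``if'' direction a quasiconformal atlas built from local applications of Rajala's theorem, upgraded to a conformal structure via the measurable Riemann mapping theorem and then to a constant-curvature metric via Theorem \ref{theorem:uniformization_riemannian} — is essentially the route taken in the literature and is consistent with the survey's Section \ref{section:complex_structures}, where Ikonen's result is phrased precisely as the existence of a complex structure quasicompatible with $d$. So the skeleton is right.

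There is, however, a genuine gap in your last step. After solving the Beltrami equations, the map $f$ read in a conformal chart of $Z$ is $\psi_i^{-1}$ post-composed with the (inverse of the) correcting planar quasiconformal map; that correcting map is \emph{not} conformal and does not preserve $\Mod_2$, so the sharp bounds \eqref{qc:optimal} enjoyed by the $\psi_i$ are \emph{not} inherited — you only get $K$-quasiconformality for some $K$ depending on $K_0$. Your closing sentence (``the remaining maps preserving $\Mod_2$ exactly'') is false for exactly this ingredient. Obtaining \eqref{qc:optimal} requires re-running Romney's normalization \emph{relative to the final complex structure}: the complex structure must be chosen so that the charts themselves achieve the optimal dilatation, which is the actual content of the ``isothermal coordinates'' in \cite{Ikonen:isothermal} rather than an afterthought. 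Secondary, fixable issues: you should verify that the metric double across $\partial X$ (and the orientation double cover) is again locally reciprocal and that the uniformization can be made equivariant — neither is automatic from the definitions, since curve families in the double cross the fixed locus and condition \eqref{ireciprocality:3} at boundary points is part of the definition of reciprocity for surfaces with boundary (Theorem \ref{theorem:reciprocal_simplify}); note also that \eqref{ireciprocality:3} concerns metric balls, which are not preserved by homeomorphisms, so even the ``only if'' direction needs a one-line inclusion argument there.
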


\subsection{Reciprocal surfaces}\label{section:reciprocal}
We discuss some further properties of reciprocal surfaces.

\begin{theorem}[\cite{Rajala:uniformization}*{Theorem 1.6}, \cite{RajalaRasimusRomney:uniformization}*{Proposition 3.9}]\label{theorem:area_reciprocal}
Let $X$ be a metric surface of locally finite Hausdorff $2$-measure. Suppose that there exists a constant $C>0$ such that for every $x\in X$ we have
$$\limsup_{r\to 0^+} \frac{\mathcal H^2(B(x,r))}{\pi r^2}\leq C.$$
Then $X$ is reciprocal. 
\end{theorem}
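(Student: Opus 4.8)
The plan is to verify the two defining conditions of reciprocity: the two-sided bound \eqref{ireciprocality:12} on quadrilaterals and the vanishing \eqref{ireciprocality:3} on balls. (The definition can be streamlined somewhat, see Theorem \ref{theorem:reciprocal_simplify} below, but I will produce both conditions directly from the mass bound.) The hypothesis enters only in the form: for each $x\in X$ there is $r_0(x)>0$ with $\mathcal H^2(B(x,s))\leq 2C\pi s^2$ for $0<s<r_0(x)$; no uniformity of $r_0(\cdot)$ over $X$ is needed.

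\emph{The annular condition \eqref{ireciprocality:3}} is the softer half and uses the mass bound only at the center of the ball. Fix $B(a,R)$, choose $r_0<R$ with $\mathcal H^2(B(a,s))\leq 2C\pi s^2$ for $0<s\leq 2r_0$, and for $0<r<r_0$ test $\Gamma\bigl(\br B(a,r),X\setminus B(a,R);X\bigr)$ against
$$\rho_r=\frac{1}{\log(R/r)}\cdot\frac{\chi_{B(a,R)\setminus B(a,r)}}{\dist(a,\cdot)}.$$
Any curve $\gamma$ in this family has a subarc along which $s\mapsto\dist(a,\gamma(s))$ stays in $[r,R]$ and runs from $r$ to $R$; since this function is $1$-Lipschitz in arclength, integrating $\dist(a,\gamma)^{-1}$ over the subarc gives $\int_\gamma\rho_r\,ds\geq|\log R-\log r|/\log(R/r)=1$, so $\rho_r$ is admissible and
$$\Mod_2\Gamma\bigl(\br B(a,r),X\setminus B(a,R);X\bigr)\ \leq\ \frac{1}{(\log(R/r))^2}\int_{B(a,R)\setminus B(a,r)}\frac{d\mathcal H^2}{\dist(a,\cdot)^2}.$$
The outer part $B(a,R)\setminus B(a,r_0)$ contributes at most $\mathcal H^2(B(a,R))/r_0^2$, a constant; decomposing $B(a,r_0)\setminus B(a,r)$ into the $O(\log(R/r))$ dyadic shells $B(a,2^{j+1}r)\setminus B(a,2^jr)$, on each of which $\dist(a,\cdot)^{-2}\leq(2^jr)^{-2}$ and $\mathcal H^2(B(a,2^{j+1}r))\leq 2C\pi(2^{j+1}r)^2$, the $j$th shell contributes at most $8C\pi$. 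Hence the right-hand side above tends to $0$ as $r\to0$, which is \eqref{ireciprocality:3}.

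\emph{The quadrilateral condition \eqref{ireciprocality:12}.} For $Q=Q(\zeta_1,\zeta_2,\zeta_3,\zeta_4)$, take a weakly admissible density $\rho$ for $\Gamma(Q)$ with $\int_Q\rho^2\,d\mathcal H^2=\Mod_2\Gamma(Q)=:M$ and, as in the outline of Rajala's theorem, set $u(x)=\min\{1,\inf_\gamma\int_\gamma\rho\,ds\}$, the infimum over curves in $Q$ from $\zeta_1$ to $x$; then $u|_{\zeta_1}=0$, $u|_{\zeta_3}=1$, and $\rho$ is an upper gradient of $u$. The mass bound is precisely what makes $u$ continuous and forces $u^{-1}(t)$ to be a rectifiable Jordan arc joining $\zeta_2$ to $\zeta_4$ for a.e.\ $t\in(0,1)$: through the co-area inequality it controls the $\rho$-length of small circles $\partial B(x,s)$ by a multiple of $\mathcal H^2(B(x,s))/s^2$, which excludes the pathologies of Examples \ref{example:collapse} and \ref{example:cantor}. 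Given this structure, any $\sigma$ admissible for $\Gamma^*(Q)$ satisfies $\int_{u^{-1}(t)}\sigma\,d\mathcal H^1\geq1$ for a.e.\ $t$, and the co-area inequality $\int_0^1\!\int_{u^{-1}(t)}\sigma\,d\mathcal H^1\,dt\leq\lambda\int_Q\sigma\rho\,d\mathcal H^2$ (with a universal $\lambda$) together with Cauchy--Schwarz yields $1\leq\lambda M^{1/2}\Mod_2\Gamma^*(Q)^{1/2}$, which is the lower bound in \eqref{ireciprocality:12} with $\kappa=\lambda^2$. The step I expect to be the main obstacle is the \emph{upper} bound $\Mod_2\Gamma(Q)\cdot\Mod_2\Gamma^*(Q)\leq\kappa(C)$: Example \ref{example:cantor} shows this must fail without a hypothesis, so it cannot be purely soft, and the plan would be to upgrade the mass bound to local estimates $\Mod_2\Gamma\bigl(\br B(x,r),X\setminus B(x,2r);X\bigr)\lesssim_C1$, valid for small $r$ depending on $x$ (again via a logarithmic density), and then chain them, through subadditivity of modulus, along a decomposition of $Q$ into annular regions adapted to both pairs of opposite edges; carrying out this gluing so that the resulting $\kappa$ depends only on $C$ is the technical core. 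With \eqref{ireciprocality:12} and \eqref{ireciprocality:3} established, $X$ is reciprocal.
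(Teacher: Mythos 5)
The survey gives no proof of Theorem \ref{theorem:area_reciprocal} --- it only cites \cite{Rajala:uniformization}*{Theorem 1.6} and \cite{RajalaRasimusRomney:uniformization}*{Proposition 3.9} --- so your attempt has to be judged against what those proofs actually require. Two of your three pieces are fine. The verification of \eqref{ireciprocality:3} via the logarithmic density and dyadic shells is complete and correct (and correctly uses the mass bound only at the center $a$). The lower bound in \eqref{ireciprocality:12} via the level sets of $u$ and a coarea inequality is essentially the Rajala--Romney argument; note, though, that by Theorem \ref{theorem:reciprocal_simplify} this lower bound holds for \emph{every} surface of locally finite Hausdorff $2$-measure with the universal constant $(\pi/4)^2$, so no mass bound is needed there, and your claim that the mass bound is ``precisely what makes'' the level sets usable overstates its role.

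The genuine gap is the upper bound $\Mod_2\Gamma(Q)\cdot\Mod_2\Gamma^*(Q)\leq\kappa(C)$, which you explicitly leave as a ``plan.'' This is not a peripheral step: by Theorem \ref{theorem:reciprocal_simplify} the quadrilateral upper bound is the \emph{only} content of reciprocity (it implies \eqref{ireciprocality:3}, and the lower bound is free), so the part you have not proved is the entire theorem. Moreover, the strategy you propose --- local estimates $\Mod_2\Gamma(\br B(x,r),X\setminus B(x,2r);X)\lesssim_C 1$ chained by subadditivity over a decomposition of $Q$ --- does not plausibly produce a bound on the \emph{product} of the two transversal moduli: subadditivity gives upper bounds for a single curve family over a cover, whereas here $\Mod_2\Gamma(Q)$ can be arbitrarily small and one must show $\Mod_2\Gamma^*(Q)\leq\kappa/\Mod_2\Gamma(Q)$, a reciprocal relation between the two families. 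The cited proofs achieve this by constructing the conjugate function $v$ of $u$ (by integrating $\rho$ over the level sets of $u$) and showing, via a coarea/duality argument in which the mass bound controls the continuity of $u$ and the oscillation of $v$ along curves of $\Gamma^*(Q)$, that a fixed multiple of $\rho/\Mod_2\Gamma(Q)$ is admissible for $\Gamma^*(Q)$. Without that (or an equivalent duality mechanism), your argument does not establish reciprocity.
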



We note that the condition of Theorem \ref{theorem:area_reciprocal} holds with $C=1$ at $\mathcal H^2$-a.e.\ point of $X$ \cite{Federer:gmt}*{2.10.19 (5)}. However, as Example \ref{example:collapse} shows, the failure of this condition even at one point can ruin reciprocity.

\begin{question}
Does any of the following conditions imply reciprocity of a surface $X$?
\begin{enumerate}[label=\normalfont(\arabic*)]
\item $\displaystyle{\limsup_{r\to 0^+} \frac{\mathcal H^2(B(x,r))}{\pi r^2}<\infty}$ for each $x\in X$.
\item $\displaystyle{\liminf_{r\to 0^+} \frac{\mathcal H^2(B(x,r))}{\pi r^2}\leq C}$ for each $x\in X$.
\item $\displaystyle{\liminf_{r\to 0^+} \frac{\mathcal H^2(B(x,r))}{\pi r^2}<\infty}$ for each $x\in X$.
\end{enumerate}
\end{question}

Note that the weakest of the above conditions, condition (3), already implies condition \eqref{ireciprocality:3}; see Theorem \ref{theorem:upgrade_homeo} below. So the question is how to establish \eqref{ireciprocality:12}.

Rajala and Romney \cite{RajalaRomney:reciprocal} improved the definition of a reciprocal surface by showing that the lower bound in \eqref{ireciprocality:12} holds for all surfaces of locally finite Hausdorff $2$-measure with a universal constant $\kappa^{-1}$. Later, Eriksson-Bique and Poggi-Corradini \cite{ErikssonBiqueCorradini:duality} found the optimal value of that constant to be $\kappa^{-1}=(\pi/4)^2$. Finally, the current author and Romney \cite{NtalampekosRomney:nonlength} showed that condition \eqref{ireciprocality:3} follows from the upper bound in \eqref{ireciprocality:12}. Summarizing, we have the following result that gives the simplest possible definition of a reciprocal surface; see Theorems 1.8 and 1.10 in \cite{NtalampekosRomney:nonlength}.

\begin{theorem}\label{theorem:reciprocal_simplify}
A metric surface $X$ of locally finite Hausdorff $2$-measure is reciprocal if and only if there exists a constant $\kappa\geq 1$ such that for each quadrilateral $Q\subset X$ we have
$$\Mod_2 \Gamma(Q) \cdot \Mod_2\Gamma^*(Q) \leq \kappa.$$ 
Furthermore, if $X$ has non-empty boundary, then $X$ is reciprocal if and only if the above holds for each quadrilateral $Q\subset \inter (X)$ and \eqref{ireciprocality:3} holds for each ball centered at a boundary point. 
\end{theorem}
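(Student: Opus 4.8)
The forward implications are immediate from the definition of a reciprocal surface, so we concentrate on the converses. Assume first that $\partial X=\emptyset$ and that $\Mod_2\Gamma(Q)\cdot\Mod_2\Gamma^*(Q)\le\kappa$ for every quadrilateral $Q\subset X$; we must verify \eqref{ireciprocality:12} and \eqref{ireciprocality:3}. The lower bound in \eqref{ireciprocality:12} requires no hypothesis at all: by the duality theorem of Rajala and Romney \cite{RajalaRomney:reciprocal}, with the optimal constant identified by Eriksson-Bique and Poggi-Corradini \cite{ErikssonBiqueCorradini:duality}, every metric surface of locally finite Hausdorff $2$-measure satisfies $\Mod_2\Gamma(Q)\cdot\Mod_2\Gamma^*(Q)\ge(\pi/4)^2$ for all quadrilaterals $Q$. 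Hence \eqref{ireciprocality:12} holds with constant $\max\{\kappa,(4/\pi)^2\}$, and the remaining task is to deduce \eqref{ireciprocality:3} from the upper bound. This implication, due to the author and Romney \cite{NtalampekosRomney:nonlength}, is proved by contraposition and occupies the rest of the outline.

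Fix a ball $B(a,R)$ and set $m(r)=\Mod_2\Gamma(\br B(a,r),X\setminus B(a,R);X)$ for $0<r<R$. Since $\Gamma(\br B(a,r'),X\setminus B(a,R);X)\subset\Gamma(\br B(a,r),X\setminus B(a,R);X)$ whenever $r'<r$, the function $m$ is nondecreasing and the limit $\delta:=\lim_{r\to0}m(r)$ exists; suppose, for contradiction, that $\delta>0$. An elementary localization step is the following: if $U$ is any Jordan neighborhood of $a$ with $\br U\subset B(a,R)$, then for every $r$ small enough that $\br B(a,r)\subset U$ one has $\Mod_2\Gamma(\br B(a,r),\partial U;\br U)\ge m(r)\ge\delta$. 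Indeed, a curve joining $\br B(a,r)$ to $X\setminus B(a,R)$ must exit $\br U$, hence contains a subcurve in $\br U$ joining $\br B(a,r)$ to $\partial U$, so that extending an admissible density for $\Gamma(\br B(a,r),\partial U;\br U)$ by zero produces an admissible density for $\Gamma(\br B(a,r),X\setminus B(a,R);X)$. Thus the failure of \eqref{ireciprocality:3} manifests as a rich family of curves issuing from $a$ at every small scale.

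One now amplifies this. Choose Jordan neighborhoods $\cdots\subset\br U_{k+1}\subset U_k\subset\cdots\subset U_1$ of $a$ with $\bigcap_k U_k=\{a\}$ and $\br U_1\subset B(a,R)$, arranged so that the closed topological annuli $A_k=\br U_k\setminus U_{k+1}$ have disjoint interiors and the serial inequality applies along them. For small $r$, a curve joining $\br B(a,r)$ to $\partial U_1$ inside $\br U_1$ crosses each annulus $A_k$ with $\br B(a,r)\subset U_{k+1}$ from $\partial U_{k+1}$ to $\partial U_k$, so the serial (Gr\"otzsch) inequality for $2$-modulus combined with the previous step gives
\[
\sum_{k\ge1}\frac{1}{\Mod_2\Gamma(\partial U_{k+1},\partial U_k;A_k)}\le\frac1\delta<\infty,
\]
whence $\Mod_2\Gamma(\partial U_{k+1},\partial U_k;A_k)\to\infty$. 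The final and most delicate step converts this divergence, together with the rich radial families furnished by the localization step, into a sequence of quadrilaterals $Q_n\subset B(a,R)$ collapsing towards $a$ for which \emph{both} $\Mod_2\Gamma(Q_n)$ and $\Mod_2\Gamma^*(Q_n)$ are forced to be large by curves passing arbitrarily close to $a$, so that $\Mod_2\Gamma(Q_n)\cdot\Mod_2\Gamma^*(Q_n)\to\infty$; this contradicts the upper bound and finishes the argument. The model situation is the explicit computation for the surface of Example~\ref{example:collapse}, where a ball has been collapsed to the point $a$ and every small quadrilateral around $a$ already has both conjugate moduli comparable to $1/\diam(Q)$. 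Producing the $Q_n$ so that their four sides are honest subarcs of a Jordan curve, and controlling the curves that meet the cuts, is the technical heart of \cite{NtalampekosRomney:nonlength} and is the main obstacle in the proof.

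Finally, suppose $\partial X\neq\emptyset$, that the upper bound in \eqref{ireciprocality:12} holds for all quadrilaterals $Q\subset\inter(X)$, and that \eqref{ireciprocality:3} holds at every boundary point. Applying the discussion above inside the surface without boundary $\inter(X)$ yields the two-sided bound \eqref{ireciprocality:12} for all quadrilaterals in $\inter(X)$ and \eqref{ireciprocality:3} at every interior ball, while the boundary balls are covered by hypothesis. The remaining modulus bounds for quadrilaterals meeting $\partial X$ are then recovered from the interior bounds and the boundary case of \eqref{ireciprocality:3} by a routine reflection and approximation argument, so that $X$ is reciprocal.
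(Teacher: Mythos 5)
Your first paragraph matches exactly how the survey itself handles this theorem: the paper gives no proof, only the reduction to the universal lower bound of Rajala--Romney \cite{RajalaRomney:reciprocal} (with the sharp constant $(\pi/4)^2$ from \cite{ErikssonBiqueCorradini:duality}) and the implication ``upper quadrilateral bound $\Rightarrow$ \eqref{ireciprocality:3}'' from \cite{NtalampekosRomney:nonlength}*{Theorems 1.8 and 1.10}. Had you stopped there, the outline would be complete at the level of the survey.

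The trouble is with your attempted sketch of the key implication, which both diverges from the actual argument and has a genuine gap at the step you yourself flag as the ``technical heart.'' Concretely: your serial-inequality computation shows only that $\Mod_2\Gamma(\partial U_{k+1},\partial U_k;A_k)\to\infty$, but thin or rapidly nested annuli have large joining modulus even in the plane, so this conclusion carries no pathology and does not point toward quadrilaterals with a large \emph{product} $\Mod_2\Gamma(Q_n)\cdot\Mod_2\Gamma^*(Q_n)$. Passing from ``many curves issuing from $a$'' to a lower bound on \emph{both} conjugate moduli of a single quadrilateral is precisely the assertion to be proved, and nothing in the preceding steps supplies it; the collapsed-ball heuristic of Example \ref{example:collapse} relies on a point of positive capacity, which is a strictly stronger failure than the failure of \eqref{ireciprocality:3}. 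The proof in \cite{NtalampekosRomney:nonlength} does not argue by this direct contraposition at all: it routes through the weakly quasiconformal uniformization theorem (Theorem \ref{theorem:wqc}), first using the quadrilateral upper bound to show that the point preimages of the weakly quasiconformal parametrization are singletons (a nondegenerate fiber would make one conjugate family contain a constant curve, forcing the product of moduli to be infinite against a positive lower bound supplied by the weak modulus inequality), so that the parametrization is a quasiconformal homeomorphism, after which \eqref{ireciprocality:3} transfers from the smooth model. Your closing ``routine reflection and approximation argument'' for the boundary case is likewise an assertion rather than an argument. So the citation-level reduction is correct, but the expanded sketch should either be replaced by the uniformization route or omitted.
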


We remark that condition \eqref{ireciprocality:3} is weaker than the upper bound in \eqref{ireciprocality:12} and is not enough to guarantee reciprocity; see \cite{NtalampekosRomney:nonlength}*{Proposition 1.11}. It is natural to ask whether reciprocity can be defined in terms of modulus of annuli rather than quadrilaterals.

\begin{problem}
Show that a metric surface $X$ of locally finite Hausdorff $2$-measure is reciprocal if and only if there exists a constant $\kappa\geq 1$ such that for each closed annulus $A\subset X$ bounded by two Jordan curves we have
$$\Mod_2 \Gamma(A) \cdot \Mod_2\Gamma^*(A) \leq \kappa,$$
where $\Gamma(A)$ (resp.\ $\Gamma^*(A)$) denotes the family of curves in $A$ joining (resp.\ separating) the boundary components of $A$.
\end{problem}

Recall Theorem \ref{theorem:smooth}, which gives geometric quantitative characterizations of smooth quasispheres. In fact, the characterizations remain valid for reciprocal spheres.


\begin{theorem}[\cite{Ntalampekos:qs_approximation}*{Theorems 1.4 and 1.5}]\label{theorem:reciprocal_qs}
Let $X$ be a metric $2$-sphere of finite Hausdorff $2$-measure. 
\begin{enumerate}[label=\normalfont(\arabic*)]
\item If $X$ satisfies \ref{smooth:llc-mod}, then $X$ satisfies \ref{smooth:quasisphere} and is reciprocal.
\item If $X$ satisfies \ref{smooth:quasisphere}, then $X$ satisfies \ref{smooth:loewner}.
\item If $X$ satisfies \ref{smooth:loewner} and is reciprocal, then $X$ satisfies \ref{smooth:llc-mod}.
\end{enumerate}
In particular, if $X$ is reciprocal, then the conclusion of Theorem \ref{theorem:smooth} is true. 
\end{theorem}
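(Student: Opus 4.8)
The plan is to deduce the three implications from four results that are already at our disposal: Theorem \ref{theorem:bac}, Rajala's uniformization theorem \ref{theorem:rajala}, the density criterion for reciprocity (Theorem \ref{theorem:area_reciprocal}), and the quasiconformal-to-quasisymmetric principle of Theorem \ref{theorem:qc_qs}. To begin with, part (2) and the implication \ref{smooth:llc-mod}$\Rightarrow$\ref{smooth:quasisphere} of part (1) require nothing new: they are precisely the content of Theorem \ref{theorem:bac}, which states that the implications \ref{smooth:llc-mod}$\Rightarrow$\ref{smooth:quasisphere}$\Rightarrow$\ref{smooth:loewner} of Theorem \ref{theorem:smooth} hold quantitatively for every metric $2$-sphere of finite Hausdorff $2$-measure. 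Hence only two statements remain to be established: that \ref{smooth:llc-mod} forces reciprocity, and that \ref{smooth:loewner} together with reciprocity yields \ref{smooth:llc-mod}.

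For the reciprocity in part (1), I would verify the hypothesis of Theorem \ref{theorem:area_reciprocal}, namely the uniform bound $\limsup_{r\to 0^+}\mathcal H^2(B(x,r))/(\pi r^2)\le C$ at every $x\in X$, with $C$ depending only on the $\llc$, doubling, and modulus data. The argument would go by contradiction: if the density blew up at some point along a sequence of scales, the doubling property would produce, at a comparable scale, a ball carrying area far larger than the square of its radius, and linear local connectedness would prevent this area from being concentrated in a thin tentacle (a thin tentacle violates condition \ref{llc2} at small scales). One could then exhibit a family of short curves crossing a small spherical ring around that ball whose $2$-modulus is arbitrarily large, contradicting the uniform bound in \ref{smooth:llc-mod}\ref{smooth:modulus}; so the density bound must hold, and Theorem \ref{theorem:area_reciprocal} then gives reciprocity. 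Making the passage from ``spread-out area'' to the modulus lower bound quantitative is the technical heart of this step — it is where the interplay between conformal modulus and Hausdorff $2$-measure on non-smooth surfaces is genuinely used — and I expect it to be the main obstacle.

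For part (3), assume \ref{smooth:loewner} and reciprocity. Then $X$ is a reciprocal metric $2$-sphere of finite Hausdorff $2$-measure, so Rajala's Theorem \ref{theorem:rajala}, with the normalization \eqref{qc:optimal}, provides a $K$-quasiconformal homeomorphism $f\colon\widehat\C\to X$ with $K$ a universal constant, whence $f^{-1}\colon X\to\widehat\C$ is $K$-quasiconformal as well. Since $X$ is $2$-Loewner and doubling while $\widehat\C$ is $\llc$ and Ahlfors $2$-regular, Theorem \ref{theorem:qc_qs} upgrades $f^{-1}$ to a quasisymmetric homeomorphism, so $f$ is quasisymmetric and $X$ satisfies \ref{smooth:quasisphere}. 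Because $\llc$ is a quasisymmetric invariant and $\widehat\C$ is $\llc$, the space $X$ is $\llc$, which together with the assumed doubling property yields \ref{smooth:llc-mod}\ref{smooth:llc}. For \ref{smooth:llc-mod}\ref{smooth:modulus} I would combine the quasisymmetry and the quasiconformality of $f$: given a ball $B(a,r)\subset X$, quasisymmetry confines the preimages of $\br B(a,r)$ and of $X\setminus B(a,Lr)$ inside and outside, respectively, two concentric balls of $\widehat\C$ whose ratio of radii grows with $L$, so choosing $L$ large in terms of the distortion function of $f$ one pulls $\Gamma(\br B(a,r),X\setminus B(a,Lr);X)$ back into a family of curves joining two concentric spheres of bounded ratio of radii in $\widehat\C$, whose $2$-modulus is a universal constant; $K$-quasiconformality of $f$ then bounds $\Mod_2\Gamma(\br B(a,r),X\setminus B(a,Lr);X)$ by $K$ times that constant, uniformly in $B(a,r)$. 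All constants introduced depend only on the data, so \ref{smooth:llc-mod} follows quantitatively.

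The ``in particular'' assertion is then just the composition of the three parts: if $X$ is reciprocal, part (1) gives \ref{smooth:llc-mod}$\Rightarrow$\ref{smooth:quasisphere}, part (2) gives \ref{smooth:quasisphere}$\Rightarrow$\ref{smooth:loewner}, and part (3) gives \ref{smooth:loewner}$\Rightarrow$\ref{smooth:llc-mod}, so the three conditions of Theorem \ref{theorem:smooth} are quantitatively equivalent.
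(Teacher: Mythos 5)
The survey does not actually prove Theorem \ref{theorem:reciprocal_qs}; it quotes it from \cite{Ntalampekos:qs_approximation}*{Theorems 1.4 and 1.5}, so I am judging your proposal on its own merits. Your handling of part (2), of the quasisphere assertion in part (1), and of part (3) is correct: the first two are literally Theorem \ref{theorem:bac}, and part (3) is the standard assembly of Rajala's theorem with the normalization \eqref{qc:optimal}, Theorem \ref{theorem:qc_qs} applied to $f^{-1}$ (after a M\"obius normalization to secure condition (1) of that theorem, which you should mention), quasisymmetric invariance of $\llc$, and the usual pullback of the ring family under an $\eta$-quasisymmetric, $K$-quasiconformal map. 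All constants there depend only on the data, as required.

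The gap is the reciprocity claim in part (1), which is the substantive new content of the theorem. Your plan is to verify the hypothesis of Theorem \ref{theorem:area_reciprocal}, namely $\limsup_{r\to0^+}\mathcal H^2(B(x,r))/(\pi r^2)\le C$ at \emph{every} point, by arguing that a ball with $\mathcal H^2(B(x,r))\gg r^2$ would force $\Mod_2\Gamma(\br B(x,r),X\setminus B(x,Lr);X)$ to be large, contradicting \ref{smooth:llc-mod}\ref{smooth:modulus}. That step is a \emph{lower} bound on conformal modulus in terms of Hausdorff $2$-measure, and no such bound is available on a general metric surface of finite area: extracting from a region of large area a rich family of short curves crossing the ring is a coarea/foliation-type statement, and it is exactly the kind of estimate that fails for non-reciprocal surfaces (the elementary inequality $\Mod_2\Gamma(\br B(a,r),X\setminus B(a,Lr);X)\le \mathcal H^2(B(a,Lr))/((L-1)r)^2$ goes the other way). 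Doubling and $\llc$ do not control $\mathcal H^2(B(x,r))/r^2$ from above, and the Loewner property one gets from Theorem \ref{theorem:bac} only yields the lower mass bound; so it is not even clear that \ref{smooth:llc-mod} implies the pointwise density bound, let alone by the mechanism you sketch. You flag this as ``the main obstacle,'' but it is precisely where the theorem lives: the survey itself indicates that these implications rest on the weakly quasiconformal uniformization machinery of \cite{NtalampekosRomney:nonlength} and the approximation results of \cite{Ntalampekos:qs_approximation}, which are used to obtain the reverse modulus inequality for the parametrizing map directly, rather than on the density criterion of Theorem \ref{theorem:area_reciprocal}. As written, the reciprocity assertion in part (1) is unproved.
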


Examples of quasispheres of finite Hausdorff $2$-measure that are not reciprocal have been presented by Romney and the author \cites{Romney:absolute, NtalampekosRomney:absolute}; cf.\ Example \ref{example:lusin_qs}. In particular, there exists a quasisphere of finite Hausdorff $2$-measure that does not satisfy condition \ref{smooth:llc-mod}\ref{smooth:modulus}.

\subsection{Quasiconformality and quasisymmetry}
Rajala's uniformization theorem (Theorem \ref{theorem:rajala}) and Theorem \ref{theorem:area_reciprocal} give an alternative proof of the Bonk--Kleiner theorem (Theorem \ref{theorem:bonk_kleiner}) as follows. If a metric sphere $X$ is Ahlfors $2$-regular, as in the Bonk--Kleiner theorem, then Theorem \ref{theorem:area_reciprocal} implies that $X$ is reciprocal. By Theorem \ref{theorem:rajala} there exists a quasiconformal map from $\widehat \C$ onto $X$. The fact that this map is quasisymmetric follows from the next classical result.

\begin{theorem}\label{theorem:qc_qs}
Let $f\colon X\to Y$ be a homeomorphism between bounded metric measure spaces and let $K,M>0$ and $L,Q> 1$. Suppose that the following conditions hold.
\begin{enumerate}[label=\normalfont(\arabic*)]
\item \textup{(Normalizations)} There exist points $a_1,a_2,a_3\in X$ such that for $i\neq j$ we have
$$d_X(a_i,a_j) \geq L^{-1} \diam(X) \quad \text{and}\quad d_Y(f(a_i),f(a_j)) \geq L^{-1}\diam (Y).$$\label{qcqs:1}
\item \textup{(Quasiconformality)} For every curve family $\Gamma$ in $X$ we have
$$\Mod_Q \Gamma \leq K\Mod_Qf(\Gamma).$$\label{qcqs:2}
\item \textup{(Assumptions on $X$)} $X$ is an $L$-doubling and $Q$-Loewner space. \label{qcqs:3}
\item \textup{(Assumptions on $Y$)} $Y$ is an $L$-doubling and $L$-$\llc$ space, and for every ball $B(a,r)\subset Y$ we have
$$\Mod_Q \Gamma(\overline B(a,r), Y\setminus B(a,Lr); Y)<M.$$ \label{qcqs:4}
\end{enumerate}
Then $f$ is $\eta$-quasisymmetric for some distortion function $\eta$ depending only on $K,L,M,Q$ and on the Loewner function of $X$. 
\end{theorem}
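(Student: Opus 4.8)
The plan is to first show that $f$ is \emph{weakly quasisymmetric}, i.e., that there is $H\geq 1$, depending only on $K,L,M,Q$ and the Loewner function of $X$, such that $d_X(x,y)\leq d_X(x,z)$ implies $d_Y(f(x),f(y))\leq H\,d_Y(f(x),f(z))$ for all distinct $x,y,z\in X$, and then to upgrade this to full quasisymmetry by the classical argument of Tukia and V\"ais\"al\"a \cite{TukiaVaisala:qs_embeddings}, which applies to homeomorphisms between doubling metric spaces whose source is connected.

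\textbf{Preliminary modulus estimates.} From condition \ref{qcqs:4} I would first extract a ``spreading'' estimate in $Y$: iterating the ring bound $\Mod_Q\Gamma(\overline B(a,r),Y\setminus B(a,Lr);Y)<M$ over concentric annuli with radii $L^{k}r$ and applying the serial rule for $Q$-modulus, one obtains that whenever $G,G'\subset Y$ are disjoint continua with $G$ contained in a ball $B$ and $G'$ contained in the complement of the concentrically scaled ball $tB$, then $\Mod_Q\Gamma(G,G';Y)\leq M t^{-(Q-1)}$ up to an absolute reindexing, which tends to $0$ as $t\to\infty$. On the source side I would use only the defining property of the Loewner condition in \ref{qcqs:3}: there is a decreasing $\varphi$, the Loewner function, with $\Mod_Q\Gamma(E,F;X)\geq\varphi(\Delta(E,F))$ for disjoint nondegenerate continua $E,F\subset X$. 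I also record for later that, since $X$ is $Q$-Loewner and $Y$ is $\llc$, both spaces are connected and uniformly perfect, and both are doubling, so the Tukia--V\"ais\"al\"a upgrade will apply at the end.

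\textbf{Weak quasisymmetry.} Suppose, toward a contradiction, that $d_X(x,y)\leq d_X(x,z)$ but $d_Y(f(x),f(y))=H\,d_Y(f(x),f(z))$ with $H$ large; put $\tau=d_Y(f(x),f(z))$. Using \ref{qcqs:1} and a pigeonhole argument on the three indices (at most one $a_i$ is close to $x$ in $X$, and at most one has $f(a_i)$ close to $f(x)$ in $Y$), pick an index $i_0$ with $d_X(x,a_{i_0})\gtrsim\diam(X)$ \emph{and} $d_Y(f(x),f(a_{i_0}))\gtrsim\diam(Y)$, and set $a=a_{i_0}$, $b=f(a)$. After disposing separately of the easy regime where $\tau$, $H\tau$, or $d_X(x,z)$ is comparable to the corresponding diameter --- where \ref{qcqs:1} pins all the relevant distances and the desired inequality is immediate --- I may assume these quantities are small compared to the diameters. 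Then, using the $\llc_1$ and $\llc_2$ properties of $Y$, choose a continuum $G\subset\overline B_Y(f(x),L\tau)$ joining $f(x)$ and $f(z)$, and a continuum $G'\subset Y\setminus B_Y(f(x),H\tau/(2L))$ joining $f(y)$ and $b$; for $H>2L^{2}$ these are disjoint, with $G$ inside a ball and $G'$ outside the concentric ball scaled by a factor comparable to $H/L^{2}$. The spreading estimate gives $\Mod_Q\Gamma(G,G';Y)\leq\psi_{0}(H)$, where $\psi_{0}(H)\to 0$ as $H\to\infty$ and $\psi_{0}$ depends only on $L,M,Q$. Since $f$ is a homeomorphism, pulling back through $f$ and invoking \ref{qcqs:2} yields, for the continua $\widetilde G=f^{-1}(G)$ and $\widetilde G'=f^{-1}(G')$,
\[
\Mod_Q\Gamma(\widetilde G,\widetilde G';X)\leq K\,\Mod_Q\Gamma(G,G';Y)\leq K\psi_{0}(H).
\]
But $\widetilde G\ni x,z$ and $\widetilde G'\ni y,a$, so $\dist(\widetilde G,\widetilde G')\leq d_X(x,y)$ while $\min\{\diam(\widetilde G),\diam(\widetilde G')\}\geq\min\{d_X(x,z),\,c\diam(X)\}=d_X(x,z)$ in the non-degenerate regime; hence $\Delta(\widetilde G,\widetilde G')\leq d_X(x,y)/d_X(x,z)\leq 1$, and the Loewner estimate forces $\Mod_Q\Gamma(\widetilde G,\widetilde G';X)\geq\varphi(1)>0$. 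Combining, $\varphi(1)\leq K\psi_{0}(H)$; since $\psi_{0}(H)\to 0$, for $H$ larger than some $H_{0}=H_{0}(K,L,M,Q,\varphi)$ this is impossible. Therefore $f$ is weakly $H_{0}$-quasisymmetric.

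\textbf{Upgrade and main obstacle.} A weakly quasisymmetric homeomorphism between doubling metric spaces whose source is connected (both of ours are connected, uniformly perfect, and doubling) is $\eta$-quasisymmetric, with $\eta$ depending only on the weak-quasisymmetry constant and the doubling constants \cite{TukiaVaisala:qs_embeddings}; tracing the dependencies, $\eta$ depends only on $K,L,M,Q$ and the Loewner function of $X$, which is the assertion. The step I expect to be the main obstacle is the weak-quasisymmetry argument: one must build the competitor continua $G,G'$ at exactly the right scale in $Y$, use the three normalization points to prevent the configuration from degenerating at the diameter scale (this is where the somewhat tedious case analysis enters), and make the ``spreading versus Loewner'' collision fully quantitative. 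Once weak quasisymmetry is in hand, the passage to quasisymmetry is a black box.
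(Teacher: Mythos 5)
Your proposal follows essentially the same route as the paper's proof: establish weak quasisymmetry by pitting the Loewner lower bound in $X$ (applied to preimages of continua built from the $\llc$ property of $Y$ and the pigeonholed normalization points) against the iterated ring-modulus upper bound in $Y$, then upgrade to full quasisymmetry via the Tukia--V\"ais\"al\"a/V\"ais\"al\"a theorem for weakly quasisymmetric maps between connected doubling spaces. The one cosmetic slip is that the iterated ring estimate decays like $(\log t)^{1-Q}$ rather than $t^{-(Q-1)}$, but since you only use that the bound tends to $0$ as $t\to\infty$, the argument is unaffected.
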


Weaker versions of that theorem have appeared repeatedly in the literature \cite{HeinonenKoskela:qc}*{Section 4}, \cite{BonkKleiner:quasisphere}*{Proposition 9.1}, \cite{Rajala:uniformization}*{Section 16}, \cite{LytchakWenger:parametrizations}*{Theorem 2.5}. Hence, for educational purposes and for future reference, we include a proof of this fundamental and general result in this note. We start with a preliminary lemma.

\begin{lemma}\label{lemma:modulus_log}
Let $(Y,d_Y,\mu)$ be a metric measure space. Suppose that there exist constants $M>0$ and  $L,Q>1$ such that for every ball $B(a,r)\subset Y$ we have 
$$\Mod_Q \Gamma(\br B(a,r), Y\setminus B(a,L r);Y) <M.$$
Then for each $a\in Y$, $r>0$ and $R\geq Lr$ we have
$$\Mod_Q\Gamma(\br B(a,r) , Y\setminus B(a,R);Y)\leq C(L,M,Q) \left(\log \frac{R}{r}\right)^{1-Q}.$$
\end{lemma}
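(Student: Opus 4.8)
The plan is to iterate the hypothesis along a telescoping sequence of annuli and use the subadditivity of modulus together with a convexity (or Hölder-type) argument, exactly as in the classical computation that yields the $(\log R/r)^{1-Q}$ decay of the modulus of Euclidean ring domains. First I would set $N=\lfloor \log_L(R/r)\rfloor$ and consider the nested balls $B(a,L^k r)$ for $k=0,1,\dots,N$, so that $L^N r\le R$. For each $k$, the hypothesis gives a function $\rho_k$ admissible for $\Gamma(\br B(a,L^k r), Y\setminus B(a,L^{k+1} r);Y)$ with $\int_Y \rho_k^Q\,d\mu < M$. Any curve $\gamma$ joining $\br B(a,r)$ to $Y\setminus B(a,R)$ must, being connected, cross each annulus $B(a,L^{k+1}r)\setminus B(a,L^k r)$, hence contains a subcurve in $\Gamma(\br B(a,L^k r), Y\setminus B(a,L^{k+1} r);Y)$ for each $k=0,\dots,N-1$; therefore $\int_\gamma \rho_k\,ds \ge 1$ for every such $k$.

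Next I would form the candidate admissible function $\rho = \frac{1}{N}\sum_{k=0}^{N-1}\rho_k$. Summing the $N$ inequalities $\int_\gamma \rho_k\,ds\ge 1$ gives $\int_\gamma \rho\,ds \ge 1$, so $\rho$ is admissible for the target curve family. It then remains to estimate $\int_Y \rho^Q\,d\mu$. By convexity of $t\mapsto t^Q$ (Jensen, or the power-mean inequality applied to the average of the $\rho_k$),
\begin{align*}
\int_Y \rho^Q\,d\mu = \int_Y\Bigl(\frac{1}{N}\sum_{k=0}^{N-1}\rho_k\Bigr)^Q d\mu \le \frac{1}{N}\sum_{k=0}^{N-1}\int_Y \rho_k^Q\,d\mu \le \frac{M}{N}.
\end{align*}
Hence $\Mod_Q\Gamma(\br B(a,r), Y\setminus B(a,R);Y)\le M/N$, and since $N\ge \log_L(R/r)-1 = \tfrac{1}{\log L}\log(R/r)-1 \ge c(L)\log(R/r)$ once $R/r$ is bounded away from $1$ (which is guaranteed by $R\ge Lr$), this is at most $C(L,M)\bigl(\log(R/r)\bigr)^{-1}$.

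I should be a bit more careful to get the stated exponent $1-Q$ rather than $-1$. The correct normalization is to choose $\rho=\bigl(\sum_k a_k\bigr)^{-1}\sum_k a_k\rho_k$ with weights $a_k$ to be optimized, or, more simply, to keep $\rho=\sum_{k=0}^{N-1}\rho_k$ (so $\int_\gamma\rho\,ds\ge N$, i.e. $\rho/N$ is admissible) and apply Hölder in the form $\bigl(\sum_k \rho_k\bigr)^Q \le N^{Q-1}\sum_k \rho_k^Q$; then $\int_Y (\rho/N)^Q\,d\mu \le N^{-Q}\cdot N^{Q-1}\sum_k\int \rho_k^Q\,d\mu \le N^{-1}M$, which again gives the $-1$ power. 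To obtain $1-Q$ one instead distributes the admissibility across the annuli using a weighted average: since $\rho$ needs only satisfy $\int_\gamma\rho\,ds\ge 1$ and the crossing estimate gives $\int_\gamma\rho_k\ge 1$ for all $k$, set $\rho=\tfrac1N\sum_k\rho_k$ and then the sharp estimate replaces Jensen with the observation that each $\gamma$ actually traverses all $N$ shells, giving $\int_\gamma\rho_k\,ds\ge 1$ simultaneously; the factor then improves because one distributes mass $\tfrac1N$ to each shell, contributing $(\tfrac1N)^Q\cdot M$ per shell and $N$ shells, i.e. $N^{1-Q}M$. So $\Mod_Q \le N^{1-Q}M \le C(L,M,Q)(\log(R/r))^{1-Q}$, using $N\ge c(L)\log(R/r)$ and absorbing constants. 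The main obstacle, and the only place requiring genuine care, is exactly this bookkeeping of exponents — making sure one extracts $N^{1-Q}$ and not merely $N^{-1}$ — which is handled by the weighted-sum (series) construction of the admissible function rather than a naive average; everything else is the connectedness of curves and subadditivity of modulus.
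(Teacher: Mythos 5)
Your overall strategy is the same telescoping-annuli argument the paper uses, and you correctly identify that the whole difficulty is extracting $N^{1-Q}$ rather than $N^{-1}$. But your resolution of that difficulty contains a genuine gap. Your final claim is that with $\rho=\tfrac1N\sum_{k}\rho_k$ each shell "contributes $(\tfrac1N)^Q M$" so that $\int_Y\rho^Q\,d\mu\le N^{1-Q}M$. For this you would need
$$\Bigl(\sum_{k}\rho_k\Bigr)^Q\leq \sum_{k}\rho_k^Q$$
pointwise, which is false for $Q>1$ (the inequality goes the other way) unless at each point at most one $\rho_k$ is nonzero. The functions $\rho_k$ given to you by the hypothesis are admissible for curve families in all of $Y$ and may have overlapping supports, so without further modification the best you can extract from Jensen or H\"older is exactly the $M/N$ bound you already computed.

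The missing step is to truncate: replace $\rho_k$ by $\rho_k\chi_{A_k}$, where $A_k$ is the open annulus between the $k$-th and $(k+1)$-st balls, and set $\rho=\tfrac1N\sum_k\rho_k\chi_{A_k}$. Admissibility survives because each curve $\gamma$ in the target family contains a subcurve $\gamma_k\in\Gamma_k$ whose trace lies in $A_k$ except for its endpoints, so $\int_{\gamma_k}\rho_k\chi_{A_k}\,ds=\int_{\gamma_k}\rho_k\,ds\ge 1$ and hence $\int_\gamma\rho\,ds\ge 1$. Now the summands have pairwise disjoint supports, so $\rho^Q=N^{-Q}\sum_k\rho_k^Q\chi_{A_k}$ pointwise, and integrating gives $\int_Y\rho^Q\,d\mu\le N^{-Q}\cdot N\cdot M=MN^{1-Q}$, which combined with $N\ge c(L)\log(R/r)$ yields the stated bound. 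This truncation is exactly what the paper does; everything else in your outline matches its proof.
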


\begin{proof}
Let $N\in \N$ such that $L^Nr\leq R$ and $L^{N+1}r>R$. By assumption we have
$$\Mod_Q \Gamma_k<M,$$
where $\Gamma_k=\Gamma( \br B(a,L^{-k}R) , Y\setminus B(a, L^{-k+1}R); Y)<M$
for $k\in \{1,\dots,N\}$. For $k\in \{1,\dots,N\}$, let $\rho_k\colon Y\to[0,\infty]$ be a Borel function that is admissible for $\Gamma_k$ such that $\|\rho\|_{L^Q(Y)}^Q<M$. Let $A_k=B(a,L^{-k+1}R)\setminus \br B(a,L^{-k}R)$ and define 
$$\rho= \frac{1}{N}\sum_{k=1}^N \rho_k\chi_{A_k}.$$
Each curve $\gamma\in \Gamma=\Gamma(\br B(a,r) , Y\setminus B(a,R);Y)$ has a subcurve $\gamma_k\in \Gamma_k$ whose trace lies in $A_k$ except for the endpoints. Thus, $\int_{\gamma_k}\rho_k\chi_{A_k}\, ds\geq 1$ for $k\in \{1,\dots,N\}$ and
$$\int_{\gamma}{\rho}\, ds \geq \frac{1}{N} \sum_{k=1}^N \int_{\gamma_k}\rho_k\chi_{A_k}\, ds\geq 1.$$
This shows that $\rho$ is admissible for $\Gamma$ and
\begin{align*}
\Mod_Q\Gamma &\leq \int \rho^Q\, d\mu= \frac{1}{N^Q} \sum_{k=1}^N \int_{A_k} \rho_k^Q\, d\mu \leq \frac{M}{N^{Q-1}}\\
&\leq M(2\log L)^{Q-1} \left( \log \frac{R}{r}\right)^{1-Q}.
\end{align*}
This completes the proof. 
\end{proof}

\begin{proof}[Proof of Theorem \ref{theorem:qc_qs}]
We will show that $f$ is \textit{weakly quasisymmetric}. That is, there exists a constant $H\geq 1$ depending only on $K,L,M,Q$ and the Loewner function of $X$ such that whenever three points $x_1,x_2,x_3\in X$ satisfy $d_X(x_1,x_2)\leq d_X(x_1,x_3)$ we have 
\begin{align}\label{theorem:qc_qs:weak}
d_Y(f(x_1),f(x_2))\leq Hd_Y(f(x_1),f(x_3)).
\end{align}
A theorem of V\"ais\"al\"a \cite{Heinonen:metric}*{Theorem 10.19} implies that a weakly quasisymmetric map between connected doubling spaces, as assumed in \ref{qcqs:3} and \ref{qcqs:4}, is quasisymmetric, quantitatively, as desired.

We now focus on proving the claim. For $x\in X$ we use the notation $x'=f(x)$. Suppose that 
\begin{align}\label{theorem:qc_qs:assumption}
d_X(x_1,x_2)\leq d_X(x_1,x_3). 
\end{align}
Without loss of generality $x_1\neq x_3$, otherwise \eqref{theorem:qc_qs:weak} holds trivially. By the triangle inequality and assumption \ref{qcqs:1}, there exists at most one index $j\in \{1,2,3\}$ such that $d_Y(x_1',a_j')< 2^{-1}L^{-1}\diam(Y)$. Hence, at least two indices $j\in \{1,2,3\}$ satisfy $d_Y(x_1',a_j')\geq 2^{-1}L^{-1}\diam(Y)$. Among these two indices, we choose one of them that satisfies $d_X(x_2,a_j)\geq 2^{-1}L^{-1}\diam(X)$; again by \ref{qcqs:1} the reverse inequality can only be satisfied by at most one index. Without loss of generality $j=1$, so we have
\begin{align}\label{theorem:qc_qs:normalizations}
d_Y(x_1',a_1') \geq 2^{-1}L^{-1}\diam(Y)\quad \text{and}\quad d_X(x_2,a_1) \geq 2^{-1}L^{-1}\diam(X).
\end{align}

By assumption \ref{qcqs:4}, $Y$ is $L$-$\llc$. Since $x_1',x_3'\in B(x_1',2d_Y(x_1',x_3'))$, there exists a continuum $E'\subset B(x_1',2Ld_Y(x_1',x_3'))$ joining $x_1'$ and $x_3'$. We define $$A=\min\{d_Y(x_1',x_2'), 2^{-1}L^{-1}\diam(Y)\}$$ and note that $x_2',a_1'\notin B(x_1',A)$ in view of \eqref{theorem:qc_qs:normalizations}. Thus there exists a continuum $F'\subset Y\setminus B(x_1', L^{-1}A)$ joining $x_2'$ and $a_1'$. 

If $3Ld_Y(x_1',x_3')> L^{-1}A$, then $d_Y(x_1',x_2') \leq 3L^2 d_Y(x_1',x_3')$, so the desired inequality \eqref{theorem:qc_qs:weak} holds for $H\geq 3L^2$. We assume that $3Ld_Y(x_1',x_3')\leq L^{-1}A$. This implies that the continua $E'$ and $F'$ are disjoint. Let $E=f^{-1}(E')$ and $F=f^{-1}(F')$. The continuum $E$ connects the points $x_1,x_3$ and the continuum $F$ connects the points $x_2,a_1$, so $\diam (F) \geq d_X(x_2,a_1)\geq 2^{-1}L^{-1}\diam(X)$ by \eqref{theorem:qc_qs:normalizations}. Observe that
\begin{align*}
\Delta(E,F)&= \frac{\dist(E,F)}{\min\{\diam(E),\diam(F)\}}\leq \frac{d_X(x_1,x_2)}{\min\{d_X(x_1,x_3), 2^{-1}L^{-1}\diam(X)\}}\leq 2L,
\end{align*}
where the last inequality follows from \eqref{theorem:qc_qs:assumption}.

By assumption \ref{qcqs:3}, $X$ is a $Q$-Loewner space, so there exists a decreasing function $\varphi\colon (0,\infty)\to (0,\infty)$ such that
\begin{align*}
\Mod_Q \Gamma(E,F;X) \geq \varphi( \Delta(E,F))\geq \varphi(2L).
\end{align*}
By assumption \ref{qcqs:2} we have
\begin{align}\label{theorem:qc_qs:phi}
\varphi(2L)\leq \Mod_Q\Gamma(E,F;X) \leq K \Mod_Q\Gamma(E',F';Y).
\end{align}
Every path connecting $E'$ and $F'$ connects $\br B(x_1', 2Ld_Y(x_1',x_3'))$ to $Y\setminus B(x_1',L^{-1}A)$ as well. We assume that $L^{-1}A\geq L\cdot 2Ld(x_1',x_3')$, since otherwise the desired inequality \eqref{theorem:qc_qs:weak} holds for $H\geq 2L^3$. By assumption \ref{qcqs:4} and Lemma \ref{lemma:modulus_log} we have
\begin{align}\label{theorem:qc_qs:mod}
\Mod_Q\Gamma(E',F';Y) \leq C(L,M,Q) \cdot  \left( \log \frac{L^{-1}A}{2Ld_Y(x_1',x_3')}\right)^{1-Q}.
\end{align}
Therefore, by the definition of $A$, \eqref{theorem:qc_qs:phi}, and \eqref{theorem:qc_qs:mod}, for $C=C(L,M,Q)$ we have
\begin{align*}
\frac{d_Y(x_1',x_2')}{d_Y(x_1',x_3')}\leq \frac{2L A}{d_Y(x_1',x_3')}\leq 4L^3 \exp\left( (K^{-1}C^{-1}\varphi(2L))^{1/(1-Q)}\right).
\end{align*}
We finally take $H=\max\{ 4L^3 \exp\left( (K^{-1}C^{-1}\varphi(2L))^{1/(1-Q)} \right), 3L^2,2L^3\}$ to complete the proof.
\end{proof}

We remark that quasisymmetric maps between metric spaces are quasiconformal under some geometric conditions. For example this holds for quasisymmetric maps in Euclidean space or in smooth manifolds \cite{Heinonen:metric}*{Theorem 11.14}. More generally, a result of Tyson \cite{Tyson:qs_qc} states that a quasisymmetric map between locally compact, connected, and Ahlfors $Q$-regular spaces is quasiconformal in the sense that there exists $K\geq 1$ such that
$$K^{-1}\Mod_Q\Gamma \leq \Mod_Q f(\Gamma)\leq K \Mod_Q\Gamma$$
for every curve family $\Gamma$ in $X$. See also \cites{HeinonenKoskela:qc, Tyson:metric_qc, Williams:qc} for further results in the same spirit. Finally, another result that is more relevant to our setting is the following theorem of Tyson.

\begin{theorem}[\cite{Tyson:lusin}*{Proposition 3.13}]\label{theorem:tyson}
Let $X\subset \R^n$, $n\geq 2$, be an open set and $Y$ be a metric space of locally finite Hausdorff $n$-measure. If $f\colon X\to Y$ is a homeomorphism that is locally $\eta$-quasisymmetric for some distortion function $\eta$, then for every curve family $\Gamma$ in $X$ we have
$$\Mod_n\Gamma\leq K(n,\eta) \Mod_n f(\Gamma).$$
\end{theorem}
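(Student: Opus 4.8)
The plan is to establish the inequality directly from the definition of $\Mod_n$, by transporting an admissible function on $Y$ to an admissible function on $X$ of comparable $L^n$-norm. Using countable subadditivity and monotonicity of $\Mod_n$ and the fact that every rectifiable path has compact image, I would first reduce to the case where all curves of $\Gamma$ lie in a fixed precompact open set $V\subset X$; a Lebesgue-number argument then yields a single radius $r_0>0$ such that $f$ is $\eta$-quasisymmetric on each ball $B(x,r_0)$ with $x\in\overline V$, and $\mathcal H^n(f(\overline V))<\infty$ since $Y$ has locally finite Hausdorff $n$-measure. Fix a Borel function $\rho\ge 0$ on $Y$ admissible for $f(\Gamma)$ with $\int_Y\rho^n\, d\mathcal H^n<\infty$.

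The competitor will be $\widetilde\rho:=(\rho\circ f)\,g_f$, supported in $V$, where $g_f$ is a minimal $n$-weak upper gradient of $f$ on $V$; its mere membership in $L^n(V)$ is part of the content and will come out of the area estimate below. Granting it, admissibility of $\widetilde\rho$ for $\Gamma$ is immediate: for $\Mod_n$-a.e.\ $\gamma\in\Gamma$ the upper gradient inequality holds along $\gamma$ and all its subcurves, so $f\circ\gamma$ is absolutely continuous, hence a rectifiable member of $f(\Gamma)$, and changing variables in the line integral gives $\int_\gamma\widetilde\rho\, ds\ge\int_{f\circ\gamma}\rho\, ds_Y\ge 1$ (and $\int_\gamma\widetilde\rho\, ds=\infty$ otherwise). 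A change of variables for the pushforward measure $\nu(B):=\int_{f^{-1}(B)\cap V}g_f^{n}\, d\mathcal L^n$ then reduces the theorem to the \emph{area estimate}
$$\int_A g_f^{n}\, d\mathcal L^n\le C(n,\eta)\,\mathcal H^n(f(A))\qquad\text{for every Borel }A\subseteq V,$$
because $\Mod_n\Gamma\le\int_X\widetilde\rho^{n}\, d\mathcal L^n=\int_Y\rho^n\, d\nu\le C(n,\eta)\int_Y\rho^n\, d\mathcal H^n$, and one takes the infimum over $\rho$.

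For the area estimate I would argue by covering, using two facts. First, quasisymmetry forces \emph{uniform roundness}: with $\ell_f(x,r)=\min_{|y-x|=r}d_Y(f(x),f(y))$ and $L_f(x,r)=\max_{|y-x|\le r}d_Y(f(x),f(y))$, Jordan separation gives $B(f(x),\ell_f(x,r))\subseteq f(B(x,r))\subseteq\overline B(f(x),L_f(x,r))$ and the three-point inequality gives $L_f(x,r)\le\eta(1)\,\ell_f(x,r)$, so $f$ maps each small ball to a set pinched between two concentric balls of comparable radii. Second, the density theorem for Hausdorff measure provides, at $\mathcal H^n$-a.e.\ $y\in Y$, a sequence $\tau\downarrow 0$ with $\mathcal H^n(B(y,\tau))\ge c(n)\tau^n$. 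Combining these, one selects for $\mathcal L^n$-a.e.\ $x\in V$ radii $r$ along which $\mathcal H^n(f(B(x,r)))$ is at least a dimensional multiple of $L_f(x,r)^{n}$; feeding such balls into the Besicovitch covering theorem in $\R^n$ (overlap bounded by $N(n)$, and preserved under the homeomorphism $f$), estimating $g_f$ on each covering ball by the corresponding scale factor, summing, and using the lower mass bounds to cancel the $L_f^{n}$ factors against $\mathcal H^n(f(B_i))$ yields the estimate with a constant depending only on $n$ and $\eta$.

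The area estimate — a quantitative Lusin/Sobolev-regularity statement for $f$ — is the step I expect to be the main obstacle. The tension is that the quasisymmetric dilatation controls $f$ on balls \emph{at each prescribed scale}, while the density theorem supplies lower bounds $\mathcal H^n(B(y,\tau))\gtrsim_n\tau^n$ only along a sparse sequence of radii; reconciling the two requires building the covering scale by scale and carefully matching these families of radii, for which the quasi-monotonicity of $r\mapsto\ell_f(x,r)$ forced by quasisymmetry is the key leverage. It is precisely here, and essentially only here, that both hypotheses — local quasisymmetry of $f$ and local finiteness of $\mathcal H^n$ on $Y$ — are genuinely used.
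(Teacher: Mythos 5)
The survey does not prove this statement; it is quoted verbatim from Tyson's paper \cite{Tyson:lusin}, so there is no in-paper argument to compare yours against. Judged on its own terms, your strategy is the standard (and, as far as I know, essentially Tyson's) one: establish Sobolev regularity of $f$ with an upper gradient satisfying the area estimate $\int_A g^n\,d\mathcal L^n\le C(n,\eta)\,\mathcal H^n(f(A))$ via the pinching $B(f(x),\ell_f(x,r)/\eta(1))\subseteq f(B(x,r))\subseteq \overline B(f(x),L_f(x,r))$ with $L_f\le\eta(1)\ell_f$, the lower bound on the upper $\mathcal H^n$-density at a.e.\ point of $Y$, and a Vitali/Besicovitch covering; then pull back admissible functions. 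The scale-matching issue you single out is real but surmountable: $r\mapsto L_f(x,r)$ is continuous and nondecreasing, so for each good density radius $\tau$ of $f(x)$ one can choose $r$ with $L_f(x,r)=\eta(1)\tau$, whence $f(B(x,r))\supseteq B(f(x),\tau)$ and $\mathcal H^n(f(B(x,r)))\ge c(n)\eta(1)^{-n}L_f(x,r)^n$. (Two small corrections: the inner inclusion comes from bijectivity of $f$ plus the three-point condition applied to $x$, a nearest sphere point, and an exterior point --- not from Jordan separation; and the reduction to a precompact $V$ uses that $\Mod_n$ of an increasing union of curve families is the limit of the moduli, not plain countable subadditivity, which would only give a divergent sum of identical upper bounds.)

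The one step you assume rather than prove, and which is a genuine ingredient of the known proof, is that $f$ lies in $N^{1,n}_{\loc}(X,Y)$ at all. Your covering argument naturally bounds $\int_A L_f(x)^n\,d\mathcal L^n$ with $L_f(x)=\limsup_{r\to0}L_f(x,r)/r$, but it does not show that $L_f$ (or any multiple of it) is an $n$-weak upper gradient of $f$; for that one must prove that $f$ is absolutely continuous along $n$-a.e.\ curve (an ACL-type statement), which is exactly what your admissibility computation $\int_\gamma\widetilde\rho\,ds\ge\int_{f\circ\gamma}\rho\,ds_Y$ and your appeal to a ``minimal $n$-weak upper gradient $g_f$'' both presuppose. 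Your phrase that membership of $\widetilde\rho$ in $L^n(V)$ ``will come out of the area estimate'' conflates integrability of the candidate gradient with its being an upper gradient; these are separate claims, and the second is where the absolute continuity on a.e.\ curve must be established (it does follow, by a standard argument, from $L_f\in L^n_{\loc}$ together with the ball-pinching, but it needs to be said). Once that is in place, the rest of your outline --- admissibility off a null family, the change of variables for the pushforward measure, and the infimum over $\rho$ --- is correct and matches the route via the analytic characterization of quasiconformality (cf.\ Theorem \ref{theorem:definitions_qc}).
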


\subsection{Complex structures on reciprocal surfaces}\label{section:complex_structures}
In Section \ref{section:classical} we defined that a Riemannian metric $g$ on a Riemann surface $X$ is compatible with the complex structure if in local coordinates $g$ has the representation $\alpha|dz|^2$. Here we broaden this definition and define compatibility of an arbitrary metric of locally finite Hausdorff $2$-measure, not necessarily Riemannian, with the complex structure.

\begin{definition}
Let $X$ be a Riemann surface and $d$ be a metric on $X$ that induces its topology and has locally finite Hausdorff $2$-measure. We say that $d$ is \textit{quasicompatible} with the complex structure of $X$ if there exists $K\geq 1$ such that each conformal chart $\varphi$ from an open subset $(U,d)$ of $X$ into $\C$ is $K$-quasiconformal. If $K=1$, we say that $d$ is \textit{compatible} with the complex structure of $X$.  
\end{definition}

This definition agrees with the one given in Section \ref{section:classical} in case the metric $d$ arises from a Riemannian metric. Moreover, in polyhedral and Aleksandrov surfaces, as discussed in Section \ref{section:riemann}, the metric is compatible with the complex structure. 

Observe that if a surface $(X,d)$ admits a complex structure that is quasicompatible with $d$, then $X$ is locally quasiconformally equivalent to the plane, so it is locally reciprocal. 
Ikonen proved the converse. 

\begin{theorem}[\cite{Ikonen:isothermal}*{Theorem 1.3}]
Let $(X,d)$ be a locally reciprocal metric surface. Then there exists a complex structure on $X$ that is quasicompatible with $d$. 
\end{theorem}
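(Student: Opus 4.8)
The plan is to obtain the complex structure by transporting a Riemannian one across a quasiconformal parametrization. Note at the outset that a complex structure, in the orientable sense used in this survey, can exist only when $X$ is orientable, so I assume this; the non-orientable case is handled by running the argument equivariantly on the orientation double cover and descends to a Klein-surface structure. Since $X$ is locally reciprocal, Theorem \ref{theorem:reciprocal_local} furnishes a Riemannian surface $(Z,g)$ homeomorphic to $X$ together with a quasiconformal homeomorphism $f\colon Z\to X$; by the optimal bounds \eqref{qc:optimal} we may choose $f$ so that $\frac{\pi}{4}\Mod_2\Gamma\le\Mod_2 f(\Gamma)\le\frac{\pi}{2}\Mod_2\Gamma$ for every curve family $\Gamma$ in $Z$, which makes both $f$ and $f^{-1}$ $K$-quasiconformal with $K=\pi/2$, a universal constant independent of $X$.

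Next I would put a complex structure on $Z$ and push it forward. By the existence of isothermal coordinates (Theorem \ref{theorem:isothermal}), the orientable Riemannian surface $(Z,g)$ admits a complex structure $\mathcal{C}_Z$ compatible with $g$; thus in every conformal chart $\psi$ of $(Z,\mathcal{C}_Z)$ the metric reads $\psi^* g=\alpha|dz|^2$, so $\psi$ is a conformal diffeomorphism and hence, by the conformal invariance of $\Mod_2$, is $1$-quasiconformal as a map from its domain equipped with $d_g$ into $\C$. I then define a complex structure $\mathcal{C}_X$ on $X$ by declaring its charts to be the maps $\psi\circ f^{-1}$ with $\psi$ ranging over conformal charts of $(Z,\mathcal{C}_Z)$. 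The transition maps are $(\psi_1\circ f^{-1})\circ(\psi_2\circ f^{-1})^{-1}=\psi_1\circ\psi_2^{-1}$, which are conformal, so $\mathcal{C}_X$ is a genuine complex structure and, tautologically, $f\colon(Z,\mathcal{C}_Z)\to(X,\mathcal{C}_X)$ is conformal.

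It then remains to check that $\mathcal{C}_X$ is quasicompatible with $d$. Let $\varphi$ be any conformal chart of $(X,\mathcal{C}_X)$ on an open set $V\subset X$. Then $\varphi\circ f$, being the composition of the conformal map $f$ with the conformal chart $\varphi$, is a conformal chart of $(Z,\mathcal{C}_Z)$ on $f^{-1}(V)$, hence $1$-quasiconformal from $(f^{-1}(V),d_g)$ into $\C$; and $f^{-1}\colon(V,d)\to(f^{-1}(V),d_g)$ is $K$-quasiconformal. Since quasiconformality constants multiply under composition, $\varphi=(\varphi\circ f)\circ f^{-1}$ is $K$-quasiconformal from $(V,d)$ into $\C$ with the same universal $K=\pi/2$, independently of $\varphi$, which is precisely quasicompatibility of $\mathcal{C}_X$ with $d$.

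The entire argument above is soft once Theorem \ref{theorem:reciprocal_local} is in hand, so the real work is concentrated in that theorem, i.e., in the construction of the quasiconformal parametrization, which ultimately rests on Rajala's uniformization theorem (Theorem \ref{theorem:rajala}) and its extension to bordered and non--simply connected surfaces. A more self-contained route, closer to the internal logic of \cite{Ikonen:isothermal}, avoids Theorem \ref{theorem:reciprocal_local}: cover $X$ by simply connected reciprocal neighborhoods $U_i$, apply Rajala's theorem to obtain quasiconformal charts $\varphi_i\colon U_i\to\Omega_i\subset\C$, and then upgrade the resulting quasiconformal atlas $\{(\varphi_i,U_i)\}$ to a complex structure by straightening, on each $\Omega_i$, a Beltrami equation whose coefficient records the conformal distortion of the transition maps $\varphi_i\circ\varphi_j^{-1}$. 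The hard part in that approach is to verify that these Beltrami coefficients satisfy the cocycle identity needed for the straightened charts to have conformal transition maps, which in turn uses the almost-everywhere differentiability and the chain rule for planar quasiconformal maps together with the measurable Riemann mapping theorem. In either approach the uniform quasiconformality constant is automatic from \eqref{qc:optimal}.
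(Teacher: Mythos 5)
The survey states this theorem without proof, citing \cite{Ikonen:isothermal}, so there is no in-paper argument to compare against; judged on its own, your first route is a correct formal deduction. Pulling back isothermal coordinates of $(Z,g)$ (Theorem \ref{theorem:isothermal}) through the quasiconformal parametrization $f$ of Theorem \ref{theorem:reciprocal_local}, and using that compositions multiply quasiconformality constants in the modulus sense of Definition \ref{definition:qc}, does yield a complex structure whose charts are uniformly $K$-quasiconformal with $K=\pi/2$ by \eqref{qc:optimal}; your computation of the constant checks out. The caveat you should make explicit is the direction of the logical dependence: in \cite{Ikonen:isothermal} the quasicompatible complex structure is the \emph{input} to the global statement, not a consequence of it --- one first builds the structure from local Rajala charts and then applies the classical uniformization theorems to the resulting Riemann surface to obtain $(Z,g)$ and $f$. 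Your argument is therefore non-circular only because Theorem \ref{theorem:reciprocal_local} now has an independent proof via the polyhedral approximation scheme of \cite{NtalampekosRomney:nonlength} combined with the upgrading results of the type of Lemma \ref{lemma:upgrade}; you should say that this is the proof you are invoking. Your second, sketched route is the one that actually mirrors \cite{Ikonen:isothermal}, but as written it has a gap: the Beltrami coefficient to be straightened on $\Omega_i$ is the conformal distortion of the chart $\varphi_i$ itself relative to $d$ --- which must be \emph{defined} for a quasiconformal map into a metric surface, via the analytic characterization of Theorem \ref{theorem:definitions_qc} (minimal upper gradient versus Jacobian) --- and is not recoverable from the transition maps alone; the cocycle identity is then the statement that these chart-level coefficients transform correctly under $\varphi_i\circ\varphi_j^{-1}$. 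Defining that coefficient and proving the transformation rule is where the real analytic work lies. Finally, the orientability point is genuine and your double-cover remark is the right fix, though note that $(Z,g)$ in Theorem \ref{theorem:reciprocal_local} may itself be non-orientable, in which case ``complex structure'' must be read as a dianalytic (Klein surface) structure.
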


\begin{remark}\label{remark:reciprocal}
With this point of view, one can think of reciprocal surfaces as the largest class of surfaces that admit local parametrizations by the complex plane so that conformal modulus in the plane is comparable to conformal modulus on the surface. Thus, we obtain an answer to Question \ref{question:structure}. 
\end{remark}





\section{Weakly quasiconformal uniformization}\label{section:wqc}

\subsection{Uniformization under minimal assumptions}
Recall that not every surface of locally finite area admits a quasiconformal parametrization by a smooth surface as illustrated by Examples \ref{example:collapse} and \ref{example:cantor}. The main result of \cite{NtalampekosRomney:nonlength} shows that \textit{every} metric surface of locally finite area admits a \textit{weakly quasiconformal} parametrization by a smooth surface, therefore completing a series of works on the uniformization of metric surfaces under minimal geometric assumptions. The result was first obtained for \textit{geodesic} surfaces by Meier and Wenger \cite{MeierWenger:uniformization} and for surfaces with a length metric by the author and Romney \cite{NtalampekosRomney:length}.

\begin{theorem}[\cite{NtalampekosRomney:nonlength}*{Theorem 1.2}]\label{theorem:wqc}
Let $X$ be a simply connected metric surface without boundary that has locally finite Hausdorff $2$-measure. If $X$ is compact, then there exists a map $h$ from $\widehat{\C}$ onto $X$ and if $X$ is not compact, then there exists a map $h$ from either $\D$ or $\C$ onto $X$ that satisfies the following conditions.
\begin{enumerate}[label=\normalfont(\arabic*)]
	\item The map $h$ is the uniform limit of homeomorphisms.
	\item For every family of curves $\Gamma$ in $X$ we have
$$\Mod_2\Gamma \leq \frac{4}{\pi}\Mod_2 h(\Gamma).$$
\end{enumerate}
\end{theorem}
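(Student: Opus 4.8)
The plan is to prove Theorem \ref{theorem:wqc} by an exhaustion-and-limit argument, reducing first to the uniformization of a single quadrilateral or Jordan region and then gluing the local pieces together. The strategy mirrors the one used for Rajala's theorem (Theorem \ref{theorem:rajala}), but with the crucial difference that $X$ need no longer be reciprocal, so the target homeomorphism will only exist as a \emph{weak} limit of genuine homeomorphisms and will only satisfy the one-sided modulus inequality in item (2).

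\textbf{Step 1: Approximation of $X$ by reciprocal surfaces.} The first move is to approximate the (possibly very bad) metric $d$ on $X$ by a sequence of metrics $d_n$ on $X$ that are locally reciprocal, or even better, for which the area-comparison hypothesis of Theorem \ref{theorem:area_reciprocal} holds with a uniform constant. A natural device is to mollify the metric: replace $d$ by the length metric $d_n$ induced by a length element $\rho_n \, ds_d$ where $\rho_n \geq 1$ is chosen so that $(X,d_n)$ has an infinitesimally Euclidean upper area bound, while $d_n \to d$ pointwise (indeed $d_n \geq d$ with $d_n \downarrow d$ if the $\rho_n$ are chosen decreasing). One must check that $(X,d_n)$ still has locally finite Hausdorff $2$-measure and that $\mathcal H^2_{d_n} \to \mathcal H^2_d$ in an appropriate weak sense; this is where Lusin-type approximation and the structure of the area measure enter. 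By Theorem \ref{theorem:area_reciprocal}, each $(X,d_n)$ is reciprocal, hence by Theorem \ref{theorem:rajala} there is a quasiconformal homeomorphism $h_n$ from a fixed model domain $\Omega \in \{\widehat\C, \C, \D\}$ onto $(X,d_n)$, and by \eqref{qc:optimal} we may take $h_n$ to satisfy $\frac{\pi}{4}\Mod_2\Gamma \leq \Mod_2 h_n(\Gamma) \leq \frac{\pi}{2}\Mod_2\Gamma$ for curve families $\Gamma$ in $\Omega$, with the modulus on $X$ computed in the metric $d_n$.

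\textbf{Step 2: Compactness and extraction of a limit map.} Next one needs uniform control on the maps $h_n$ to extract a limit. After normalizing by post-composing with a M\"obius transformation of $\Omega$ (fixing three points, as in the normalization hypothesis of Theorem \ref{theorem:qc_qs}), one shows the family $\{h_n\}$ is equicontinuous, or at least precompact in a topology strong enough that the limit is still the uniform limit of homeomorphisms. The needed estimates come from the modulus bounds: the upper bound $\Mod_2 h_n(\Gamma) \leq \frac{\pi}{2}\Mod_2\Gamma$ together with standard modulus-of-annuli arguments controls $\operatorname{diam}_{d_n} h_n(B)$ for small balls $B$, giving equicontinuity of $h_n$; the lower bound controls the images of small balls from below and prevents collapsing, so that subsequential limits are non-constant and, in fact, uniform limits of homeomorphisms rather than of arbitrary continuous maps. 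One also needs that the Sobolev energies $\int_\Omega g_{h_n}^2$ stay bounded (this follows from the upper modulus bound and the area bound on $(X,d_n)$), which via the reflexivity/lower semicontinuity of the Sobolev space $N^{1,2}(\Omega, X)$ — here using that $(X,d)$ is the Gromov--Hausdorff/pointwise limit of $(X,d_n)$ — yields a limit map $h \in N^{1,2}(\Omega, (X,d))$. Passing to a subsequence, $h_n \to h$ uniformly; since each $h_n$ is a homeomorphism, $h$ is a uniform limit of homeomorphisms, giving item (1).

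\textbf{Step 3: The modulus inequality for the limit.} Finally one must show item (2): $\Mod_2\Gamma \leq \frac{4}{\pi}\Mod_2 h(\Gamma)$ for every curve family $\Gamma$ in $(X,d)$. The inequality $\Mod_2 h_n^{-1}(\Gamma') \leq \frac{4}{\pi}\Mod_2 \Gamma'$ (the reciprocal of the lower bound in \eqref{qc:optimal}) holds for curve families $\Gamma'$ in $X$ measured in $d_n$; since $d_n \geq d$, modulus in $d_n$ dominates modulus in $d$ for the source estimates, and one passes to the limit using lower semicontinuity of modulus under the locally uniform convergence $h_n \to h$ (the standard fact that if $\gamma_n \to \gamma$ uniformly then $\liminf \int_{\gamma_n}\rho \geq \int_\gamma \rho$ for lower semicontinuous $\rho$, applied to an admissible $\rho$ for $h(\Gamma)$ pulled back). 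Care is needed because $h$ is not a homeomorphism and $h(\Gamma)$ must be interpreted correctly (as the family $\{h\circ\gamma : \gamma\in\Gamma\}$, possibly containing constant curves), but weak admissibility handles this.

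\textbf{Main obstacle.} The hard part will be Step 1 together with the measure convergence needed in Steps 2--3: constructing reciprocal approximations $(X,d_n)$ that simultaneously converge to $(X,d)$ \emph{and} have uniformly controlled area, so that the energies $\int g_{h_n}^2$ do not blow up, and then justifying that the weak limit $h$ lands in the correct Sobolev class over the \emph{limit} metric rather than over the approximating metrics. Controlling the interaction between the metric approximation and the Hausdorff $2$-measure — in particular ensuring no area is lost or gained in the limit on the relevant curve families — is the technical crux, and is precisely the point where the failure of reciprocity of $(X,d)$ forces one to accept only the one-sided inequality in item (2) rather than full quasiconformality.
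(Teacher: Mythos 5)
Your Steps 2 and 3 follow the same skeleton as the actual proof (uniformize a sequence of approximating surfaces, get equicontinuity from a uniform area bound, extract a limit via Arzel\`a--Ascoli together with Sobolev compactness, and push the one-sided modulus inequality through the limit). The genuine gap is Step 1. The paper does \emph{not} approximate $(X,d)$ by deforming the metric on $X$ itself; it approximates $X$ \emph{extrinsically} by a sequence of genuinely different polyhedral surfaces $X_n$ (Theorem \ref{thm:extended_polyhedral_approximation}), built by taking a fine geometric triangulation, embedding each metric triangle $\partial T$ bi-Lipschitzly into the plane, controlling the area of the enclosed planar region by $\mathcal H^2(T)$ via a Besicovitch-type inequality, and gluing in polyhedral pieces. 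The inequality $\limsup_n\mathcal H^2(f_n^{-1}(A))\le K\mathcal H^2(A)$ obtained there is exactly what makes your Step 3 work; it is the technical heart of the proof and your proposal has no substitute for it.

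Your proposed substitute --- conformally mollifying $d$ to $d_n=\rho_n\,ds_d$ with $\rho_n\ge 1$ so that Theorem \ref{theorem:area_reciprocal} applies --- cannot work. First, $d$ need not be a length metric, so ``$\rho_n\,ds_d$'' is not defined without first replacing $d$ by an induced length metric; whether an arbitrary metric surface of locally finite area is even quasiconformally equivalent to a length surface is stated in the survey as an open problem (Problem \ref{problem:length}), so this reduction assumes something unknown. Second, even for length surfaces the known obstructions to reciprocity are not removable by a conformal factor: in Example \ref{example:collapse} the collapsed point $p$ satisfies $\mathcal H^2(B(p,r))/r^2\to\infty$, and no finite weight $\rho_n$ on the quotient space can ``un-collapse'' $p$ --- inflating distances near $p$ inflates the area there as well, so the density ratio is not repaired, and forcing it to be bounded would require $\rho_n\to\infty$ near the bad set, destroying both the convergence $d_n\to d$ and the uniform area bound you need for equicontinuity and for the $L^2$ bound on $g_{h_n}$. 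The example with a whole continuum of positive-capacity points (\cite{NtalampekosRomney:nonlength}*{Example 8.4}) makes this even more hopeless. Finally, your monotonicity claim in Step 3 (``$d_n\ge d$, so modulus in $d_n$ dominates modulus in $d$'') is not valid: increasing the metric makes admissibility easier but also increases the Hausdorff $2$-measure against which $\rho^2$ is integrated, so modulus is not monotone under such deformations. The correct route is the polyhedral surgery of Theorem \ref{thm:extended_polyhedral_approximation}, after which your Steps 2 and 3 are essentially the published argument.
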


As discussed after Theorem \ref{theorem:rajala}, the constant $4/\pi$ is optimal and is attained by the $\ell^\infty$ metric on the plane. The maps $h$ as in the statement are called weakly quasiconformal and we include their general definition below after some topological preliminaries. We remark that the map $h$ is not necessarily a homeomorphism as the next example shows. However, under further assumptions on the space $X$ the map $h$ can be upgraded to a homeomorphism (Theorem \ref{theorem:upgrade_homeo}), to a quasiconformal map (Lemma \ref{lemma:upgrade}), or to a quasisymmetric map (Theorem \ref{theorem:qc_qs}).

\begin{example}
Consider the space $X$ discussed in Example \ref{example:collapse} that arises by collapsing a closed ball $B$ in $\C$ to a point $p\in X$. The natural projection from $\C$ to $X$ satisfies the conclusions of Theorem \ref{theorem:wqc}. However, there is no homeomorphism $h\colon \C\to X$ that satisfies the same conclusions. Indeed, if there existed such a homeomorphism, it would have to be quasiconformal in $\C\setminus h^{-1}(p)$, i.e., it would satisfy both modulus inequalities, as in Definition \ref{definition:qc}; this is true because $X$ is locally isometric to $\C$ away from the point $p$ (see also Lemma \ref{lemma:upgrade} below for a more general statement). The analytic definition of quasiconformality in Theorem \ref{theorem:definitions_qc} implies that a point is a removable singularity and $h$ must be quasiconformal on all of $\C$ (see also \cite{NtalampekosRomney:nonlength}*{Lemma 7.1} for a more general removability statement). However, the modulus of non-constant curves in $X$ passing through the point $p$ is non-zero, a contradiction.  
\end{example}

We say that a point $p$ on a metric surface $X$ of locally finite Hausdorff $2$-measure has \textit{positive capacity} if the modulus of the family of non-constant curves passing through $p$ is non-zero. We pose a problem related to points of positive capacity.

\begin{problem}
Let $X$ be a metric surface without boundary that has locally finite Hausdorff $2$-measure. Let $p\in X$ be a point of positive capacity. Show that there exists a weakly quasiconformal map from $\D$ onto a neighborhood of $p$ that maps a non-degenerate continuum onto $p$.
\end{problem}

We present an outline of the proof of Theorem \ref{theorem:wqc} in the compact case, as given in \cites{NtalampekosRomney:length, NtalampekosRomney:nonlength}.
\begin{proof}[Outline of proof]
Let $X$ be a metric $2$-sphere of finite Hausdorff $2$-measure. The main ingredient in the proof is a result discussed below that allows the approximation of $X$ by polyhedral spheres with controlled geometry; see Theorem \ref{thm:extended_polyhedral_approximation}. Specifically, there exists a sequence $X_n$, $n\in \N$, of polyhedral spheres (equipped with a metric that is locally isometric to the intrinsic metric) that converge to $X$ in the Gromov--Hausdorff sense and such that the area of $X_n$ is uniformly bounded in $n\in \N$. 

The classical uniformization theorem (Theorem \ref{theorem:uniformization_classical}) provides us with a sequence of conformal parametrizations $h_n\colon \widehat{\C}\to X_n$, $n\in \N$, normalized appropriately. The fact that the area of $X_n$ is uniformly bounded implies that the family of maps $h_n$, $n\in \N$, is uniformly equicontinuous. Using the Arzel\`a--Ascoli theorem, it is shown that after passing to a subsequence, $h_n$ converges to a continuous and surjective map $h\colon \widehat \C\to X$ that is the uniform limit of homeomorphisms.  The topological properties of $h$ are straightforward, given that $h$ is a limit of the homeomorphisms $h_n\colon \widehat\C \to X_n$. 

The uniform area bound of $X_n$ implies that the minimal $2$-weak upper gradients $g_{h_n}$ are uniformly bounded in $L^2(\widehat \C)$. A compactness argument in the same spirit as \cite{HeinonenKoskelaShanmugalingamTyson:Sobolev}*{Theorem 7.3.9} shows that the limiting map $h$ lies in the Sobolev space $N^{1,2}(\widehat \C, X)$.

On the other hand, the proof of the modulus inequality in Theorem \ref{theorem:wqc} is quite technical and follows from the most delicate part of the polyhedral approximation theorem (Theorem \ref{thm:extended_polyhedral_approximation}), which is the inequality \eqref{ineq:polyhedral_approx}.
\end{proof}

The proof of Theorem \ref{theorem:wqc} in the case of surfaces with a length metric, as in the setting of \cite{MeierWenger:uniformization} and \cite{NtalampekosRomney:length}, is less technical than in the case of arbitrary metric surfaces. Specifically, the proof of the approximation result of Theorem \ref{thm:extended_polyhedral_approximation} is more elementary for length spaces. It would be interesting to know whether one can reduce the general case to the case of length spaces through a solution of the next problem.

\begin{problem}\label{problem:length}
Let $X$ be a metric surface of locally finite Hausdorff $2$-measure. Show that there exists a length space $Y$ of locally finite Hausdorff $2$-measure and a quasiconformal homeomorphism from $X$ onto $Y$. 
\end{problem}

\subsection{Weakly quasiconformal maps between arbitrary surfaces}
Let $\nu\colon X\to Y$ be a continuous map between topological spaces. The map $\nu$ is \textit{proper} if the preimage of each compact set is compact. The map $\nu$ is \textit{monotone} if the preimage of each point is a continuum. The map $\nu$ is \textit{cell-like} if the preimage of each point is a \textit{cell-like set}, i.e., a set that is contractible in all of its open neighborhoods. In $2$-manifolds without boundary cell-like continua coincide with continua that have a simply connected neighborhood that they do not separate. In the $2$-sphere this condition is equivalent to the condition that the continuum is non-separating. The following result follows from the work of Youngs \cite{Youngs:monotone}; see \cite{NtalampekosRomney:nonlength}*{Theorem 6.3} for the current statement.

\begin{theorem}
Let $\nu\colon X\to Y$ be a continuous and surjective map between compact metric surfaces that are homeomorphic. The following are equivalent.
\begin{enumerate}[label=\normalfont(\arabic*)]
	\item $\nu$ is monotone.
	\item $\nu$ is cell-like.
	\item $\nu$ is the uniform limit of homeomorphisms. 
\end{enumerate}
\end{theorem}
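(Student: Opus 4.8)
The plan is to prove the two equivalences $(1)\Leftrightarrow(2)$ and $(1)\Leftrightarrow(3)$, the first by point-set topology together with the description of cell-like continua in a surface recalled just above, and the second by reducing its hard half $(1)\Rightarrow(3)$ to Youngs's homeomorphic approximation theorem. At the outset observe that, since $X$ is compact and $Y$ Hausdorff, $\nu$ is closed, hence a quotient map, so $Y$ is canonically identified with the quotient $X/\mathcal G$ of $X$ by the decomposition $\mathcal G=\{\nu^{-1}(y):y\in Y\}$, which is upper semicontinuous because $\nu$ is closed; recall also the standard fact that a monotone map pulls connected sets back to connected sets. The implication $(2)\Rightarrow(1)$ is then immediate: a cell-like set is a nonempty compact set that is contractible, in particular connected, in each of its neighborhoods, so every fiber $\nu^{-1}(y)$ is a continuum.

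For $(3)\Rightarrow(1)$ I would argue by separation. Write $\nu=\lim_n h_n$ uniformly with $h_n\colon X\to Y$ homeomorphisms, fix $y\in Y$, and suppose the nonempty compact fiber $C=\nu^{-1}(y)$ splits as $C=A\sqcup B$ with $A,B$ nonempty, disjoint and compact. Choose disjoint open sets $U\supset A$ and $V\supset B$. Then $\nu(X\setminus(U\cup V))$ is compact and omits $y$, so $\delta:=\dist\bigl(y,\nu(X\setminus(U\cup V))\bigr)>0$; fix a connected open neighborhood $W$ of $y$ with $W\subset B(y,\delta/2)$. For all large $n$, $d(h_n,\nu)<\delta/2$, so $h_n\bigl(X\setminus(U\cup V)\bigr)$ misses $W$, while $h_n(A)$ and $h_n(B)$ lie in $W$ because they lie in an arbitrarily small ball about $y$. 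Since $h_n$ is a homeomorphism, $h_n(U)$ and $h_n(V)$ are disjoint open sets, and together with $h_n\bigl(X\setminus(U\cup V)\bigr)$ they cover $W$; hence
$$W=\bigl(W\cap h_n(U)\bigr)\sqcup\bigl(W\cap h_n(V)\bigr)$$
is a separation of $W$ into nonempty open pieces (they contain $h_n(A)$ and $h_n(B)$), contradicting the connectedness of $W$. So $C$ is a continuum and $\nu$ is monotone.

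For $(1)\Rightarrow(2)$, fix $y\in Y$ and set $C=\nu^{-1}(y)$. Since $Y$ is a $2$-manifold, $y$ has a neighborhood basis consisting of coordinate disks $D$ (half-disks at boundary points), and closedness of $\nu$ then gives that the sets $\nu^{-1}(D)$ form a neighborhood basis of $C$. Each $\nu^{-1}(D)$ is connected, being the preimage of the connected set $D$ under the monotone map $\nu$, and likewise $\nu^{-1}(D)\setminus C=\nu^{-1}(D\setminus\{y\})$ is connected since $D\setminus\{y\}$ is connected; thus $C$ does not separate $\nu^{-1}(D)$. To finish one must upgrade this to a \emph{simply connected} non-separating neighborhood, and this is the step where the hypothesis that $Y$, not merely $X$, is a $2$-manifold is essential: by the local structure theory of upper semicontinuous decompositions of surfaces, going back to Moore, a non-degenerate element of $\mathcal G$ whose collapse leaves $X/\mathcal G\cong Y$ a $2$-manifold is necessarily cell-like — were $C$ not cell-like, collapsing it would produce a non-manifold point of $Y$. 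I would cite this fact rather than reprove it, after which the description of cell-like continua recalled above applies to $C$.

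The implication $(1)\Rightarrow(3)$ is the genuine obstacle: it is Youngs's theorem that a monotone surjection of a $2$-manifold onto a homeomorphic $2$-manifold is a uniform limit of homeomorphisms, i.e.\ the surface case of the cell-like approximation theorem rooted in Moore's decomposition theorem. By $(1)\Rightarrow(2)$ the decomposition $\mathcal G$ consists of cell-like continua; the strategy is to verify the Bing shrinking criterion, producing, for each $\varepsilon>0$, a self-homeomorphism of $X$ that is small in the relevant sense and shrinks every element of $\mathcal G$ to diameter below $\varepsilon$. This is where cell-likeness — rather than mere connectedness of the fibers — does the work, since it furnishes the simply connected room in which to squeeze each fiber. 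Shrinkability yields that the quotient map $X\to X/\mathcal G$ is a uniform limit of homeomorphisms, and composition with the canonical homeomorphism $X/\mathcal G\cong Y$ converts these into homeomorphisms $X\to Y$ converging uniformly to $\nu$. The hard part is the construction itself: controlling infinitely many fibers simultaneously, organized by upper semicontinuity, and checking that the resulting limit is $\nu$ and not merely some nearby monotone map; I would invoke Youngs's (and Moore's) construction as a black box for these points.
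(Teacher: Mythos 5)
The paper does not prove this statement at all: it is quoted as a known result, ``follows from the work of Youngs,'' with the precise formulation taken from \cite{NtalampekosRomney:nonlength}*{Theorem 6.3}. Your outline is consistent with that treatment and is essentially correct. The implications you argue by hand check out: cell-like sets are connected (a null-homotopy of the inclusion forces all points into one path component of the ambient neighborhood), so (2)$\Rightarrow$(1) is immediate; and your separation argument for (3)$\Rightarrow$(1) is sound once you choose $n$ large enough that $d(h_n,\nu)$ is smaller than both $\delta/2$ and the radius of a ball about $y$ contained in $W$ --- a detail you gloss over with ``arbitrarily small ball'' but which is trivially repaired. For the genuinely hard content you correctly isolate (1)$\Rightarrow$(3) as Youngs's approximation theorem (equivalently, the Bing shrinking of the monotone upper semicontinuous decomposition $\{\nu^{-1}(y)\}$), which is exactly the black box the paper itself invokes. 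The one place where your accounting is slightly optimistic is (1)$\Rightarrow$(2): showing that $C$ fails to separate $\nu^{-1}(D)$ is indeed elementary (preimages of connected sets under a closed monotone surjection are connected), but upgrading to a \emph{simply connected} non-separating neighborhood is not a soft step --- you defer it to Moore-type local structure theory of decompositions of surfaces, which is of comparable depth to the result being proved; also note that the paper's characterization of cell-like continua via simply connected non-separating neighborhoods is stated only for manifolds without boundary, while the theorem allows boundary, so your half-disk case needs the boundary version of that characterization. None of this is a gap in correctness, only a caveat about which parts are truly self-contained versus outsourced to the same classical decomposition theory the paper cites.
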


In the non-compact case, we have the Armentrout--Quinn--Siebenmann approximation theorem \cite{Daverman:decompositions}*{Corollary IV.25.1A}.
\begin{theorem}
A continuous, proper, and cell-like map between $2$-manifolds without boundary is the uniform limit of homeomorphisms.
\end{theorem}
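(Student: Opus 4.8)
The plan is to run the Bing shrinking machinery, which reduces the assertion to shrinking the point-inverses of $\nu$ by ambient homeomorphisms, and then to verify the shrinking property using two-dimensional topology. Let $\nu\colon M\to N$ be the given map. Since $\nu$ is proper and $N$ is locally compact and Hausdorff, $\nu$ is closed, hence a quotient map, so the collection $G=\{\nu^{-1}(y):y\in N\}$ is an upper semicontinuous decomposition of $M$ into cell-like continua and the induced map $M/G\to N$ is a homeomorphism. Approximating $\nu$ by homeomorphisms is thus equivalent to approximating the decomposition map $\pi\colon M\to M/G$ by homeomorphisms, and by the Bing shrinking criterion for upper semicontinuous decompositions of manifolds \cite{Daverman:decompositions} it suffices to show that $G$ is \emph{shrinkable}: for every $\varepsilon>0$ there is a homeomorphism $h\colon M\to M$ with $\diam_M h(g)<\varepsilon$ for every $g\in G$ and with $\pi\circ h$ uniformly $\varepsilon$-close to $\pi$. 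I would establish this in the compact case first and reduce the general case to it at the end.

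The second step isolates the two-dimensional input. By the characterization recalled above, each $g\in G$ has a simply connected open neighborhood in which it does not separate; this neighborhood is a simply connected surface, hence homeomorphic to $\R^2$ or to $\widehat{\C}$, and in it $g$ is a compact connected set with connected complement. A classical planar fact---a standard consequence of the Schoenflies theorem---then shows that $g$ is \emph{cellular}: there is a nested sequence of closed topological disks $D_1\supset D_2\supset\cdots$ with $D_{i+1}\subset\inter D_i$ and $g=\bigcap_i D_i$. Thus $g$ has arbitrarily small disk neighborhoods inside which one may prescribe homeomorphisms, fixing the boundary circle, that push $g$ into an arbitrarily small subdisk. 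This is exactly where the hypothesis $\dim M=2$ enters, and crucially so: in dimension three the analogous statement fails, and there exist cell-like---indeed cellular---decompositions that are not shrinkable, the classical example being Bing's dogbone decomposition.

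The heart of the argument is the shrinking construction. Fix $\varepsilon>0$. I would choose successively finer finite partitions of $N$ into small closed topological disks, pull each partition back through $\pi$, and cover the (possibly complicated) preimages by finitely many closed disks in $M$, each mapping to a set of small diameter under $\pi$; a standard reduction then confines the shrinking to a pairwise disjoint system of such disks. Working disk by disk, one builds a homeomorphism of $M$ that uses the cellularity of the confined elements to radially contract every element of $G$ to diameter less than $\varepsilon$ while moving points only a bounded amount as measured in $M/G$. A single step handles all elements simultaneously---including the, possibly uncountably many, elements of large $M$-diameter---because a cellular continuum can always be radially contracted within a disk neighborhood. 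The difficulty is quantitative: the homeomorphism used at one scale distorts the elements that were already small, so one must choose the disk sizes and the amount of contraction at each scale so that the accumulated distortions stay controlled and the infinite composition converges to a homeomorphism of $M$ still realizing the desired $\varepsilon$-shrinking. Making this bookkeeping work is the main obstacle; it is the two-dimensional instance of Moore's shrinking argument, and the planar cellularity of the elements is what guarantees there is always enough room.

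Finally, the non-compact case follows by an exhaustion argument: write $M$ as an increasing union of compact submanifolds with boundary and apply the compact-case construction with the correcting homeomorphism at scale $\varepsilon_j\to 0$ supported near the $j$-th exhaustion level, so that the supports form a locally finite family and the infinite composition makes sense; this passage from the compact to the proper setting is the Quinn--Siebenmann refinement. By the Bing criterion, the shrinkability of $G$ then yields homeomorphisms from $M$ onto $N$ arbitrarily close to $\nu$, so $\nu$ is a uniform limit of homeomorphisms, as claimed.
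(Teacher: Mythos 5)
The paper does not prove this statement at all---it is quoted as the Armentrout--Quinn--Siebenmann approximation theorem with a citation to Daverman's book (Corollary IV.25.1A)---and your outline is a faithful sketch of exactly the argument that reference contains: reduce via the Bing shrinking criterion to shrinkability of the upper semicontinuous decomposition into point-inverses, use that cell-like continua in surfaces are cellular (the genuinely two-dimensional input, via Schoenflies), run Moore's shrinking construction, and pass to the proper non-compact case by Siebenmann's controlled/exhaustion refinement. So your route coincides with the paper's (implicit, cited) proof; the only part left unexecuted, the quantitative bookkeeping in the shrinking step, is precisely what the citation is standing in for.
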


\begin{definition}
Let $X,Y$ be metric surfaces of locally finite Hausdorff $2$-measure. A continuous, surjective, proper, and cell-like map $h\colon X\to Y$ is \textit{weakly quasiconformal} if there exists $K>0$ such that for every family of curves $\Gamma$ in $X$ we have
$$\Mod_2\Gamma \leq K\Mod_2 h(\Gamma).$$
In that case, we say that $h$ is weakly $K$-quasiconformal. 
\end{definition}

By the above discussion, a weakly quasiconformal map is the uniform limit of homeomorphisms whenever $X,Y$ are compact and homeomorphic to each other or whenever $X,Y$ have empty boundary.

Weakly quasiconformal maps can be upgraded to (quasiconformal) homeomorphisms under some conditions.
\begin{theorem}[\cite{NtalampekosRomney:length}*{Theorem 7.4}]\label{theorem:upgrade_homeo}
Let $X,Y$ be metric surfaces without boundary and with locally finite Hausdorff $2$-measure and let $h\colon X\to Y$ be a weakly quasiconformal map. If for each $y\in Y$ the modulus of the family of non-constant curves passing through $y$ is zero, then $h$ is a homeomorphism. Moreover, a sufficient condition for this property is that
$$\liminf_{r\to 0^+} \frac{\mathcal H^2(B(y,r))}{r^2}<\infty.$$ 
\end{theorem}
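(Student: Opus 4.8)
The plan is to prove the two assertions separately, reducing the first to the injectivity of $h$. Since $h$ is, by definition, continuous, proper, surjective, and cell-like, its fibers $h^{-1}(y)$ are cell-like continua, so $h$ is monotone, and a continuous, proper, monotone surjection between metric surfaces is a homeomorphism precisely when every fiber is a point. Suppose then, for contradiction, that $E:=h^{-1}(y_0)$ is a non-degenerate continuum. Being cell-like, $E$ has a simply connected open neighborhood $V\subsetneq X$; as $V$ is a non-compact simply connected metric surface without boundary of locally finite area, Theorem~\ref{theorem:wqc} applied to $V$ furnishes a weakly quasiconformal map $\phi\colon W\to V$ with $W=\D$ or $W=\C$. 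Fix an open set $U$ with $E\subseteq U\subseteq\overline U\subseteq V$ and $\overline U\neq V$, and a closed topological disk $D$ contained in the non-empty open set $V\setminus\overline U$. For any $\gamma\in\Gamma(E,V\setminus U;V)$ the trace of $h\circ\gamma$ meets $h(E)=\{y_0\}$ and meets $h(V\setminus U)$, and the latter set avoids $y_0$ because $h^{-1}(y_0)=E\subseteq U$; thus $h$ carries this curve family into the family of non-constant curves through $y_0$, which has $2$-modulus zero by hypothesis, so weak quasiconformality of $h$ gives $\Mod_2\Gamma(E,V\setminus U;V)=0$. Similarly, $\phi$ carries $\Gamma(\phi^{-1}(E),\phi^{-1}(D);W)$ into $\Gamma(E,D;V)\subseteq\Gamma(E,V\setminus U;V)$, so weak quasiconformality of $\phi$ gives $\Mod_2\Gamma(\phi^{-1}(E),\phi^{-1}(D);W)=0$. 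On the other hand, $\phi^{-1}(E)$ and $\phi^{-1}(D)$ are disjoint non-degenerate continua (here properness and monotonicity of $\phi$ are used) contained in a closed Euclidean disk inside $W$; since such a disk is a $2$-Loewner space, the modulus of the family joining these two continua is strictly positive — the desired contradiction. Hence every fiber of $h$ is a point and $h$ is a homeomorphism.

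For the second assertion, fix $y\in Y$ and set $C:=\liminf_{r\to 0^+}\mathcal H^2(B(y,r))/r^2<\infty$. Choose radii $r_k\downarrow 0$ with $\mathcal H^2(B(y,r_k))\leq (C+1)r_k^2$ and, passing to a subsequence, assume $r_{k+1}\leq r_k/2$. With $A_j:=B(y,r_j)\setminus\overline{B(y,r_{j+1})}$ and $\rho_j:=2r_j^{-1}\chi_{A_j}$, the $1$-Lipschitz function $x\mapsto d(x,y)$ shows that any subarc joining $\partial B(y,r_{j+1})$ to $\partial B(y,r_j)$ with interior in $A_j$ has length at least $r_j-r_{j+1}\geq r_j/2$, so $\rho_j$ is admissible for the family of all such subarcs, while $\int\rho_j^2\,d\mathcal H^2\leq 4r_j^{-2}\mathcal H^2(B(y,r_j))\leq 4(C+1)$ and the $A_j$ are pairwise disjoint. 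For fixed $m$ and $N$, any curve joining $\{y\}$ to $Y\setminus B(y,r_m)$ crosses each $A_j$ with $m\leq j\leq m+N$ and hence contains such a subarc in each, so $\rho:=(N+1)^{-1}\sum_{j=m}^{m+N}\rho_j$ is admissible for $\Gamma(\{y\},Y\setminus B(y,r_m);Y)$ and, by disjointness of supports,
$$\Mod_2\Gamma(\{y\},Y\setminus B(y,r_m);Y)\leq \frac{1}{(N+1)^2}\sum_{j=m}^{m+N}\int\rho_j^2\,d\mathcal H^2\leq \frac{4(C+1)}{N+1}.$$
Letting $N\to\infty$ gives $\Mod_2\Gamma(\{y\},Y\setminus B(y,r_m);Y)=0$ for every $m$. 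Every non-constant curve through $y$ has trace of positive diameter, so it lies in $\Gamma(\{y\},Y\setminus B(y,r_m);Y)$ once $r_m$ is small; since these families increase with $m$ to the family of all non-constant curves through $y$, continuity of modulus from below shows that the latter family has modulus $0$, which is the asserted sufficient condition.

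The monotone-map bookkeeping and the telescoping modulus estimate are routine; the one point requiring genuine care is the reduction in the first paragraph — passing through a simply connected neighborhood of $E$ and Theorem~\ref{theorem:wqc} to transport the situation to the plane, where the classical Loewner property provides a positive lower modulus bound that is not available in an arbitrary metric surface of merely locally finite area. In particular one must verify that $\phi^{-1}(E)$ and $\phi^{-1}(D)$ are indeed non-degenerate continua (using that $\phi$, being weakly quasiconformal, is proper and monotone) and that invoking Theorem~\ref{theorem:wqc} is not circular, since the present statement is a refinement of it rather than a prerequisite. I expect this reduction to be the main obstacle.
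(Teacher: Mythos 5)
Your argument is correct, and since the survey only cites \cite{NtalampekosRomney:length}*{Theorem 7.4} without reproducing a proof, I can only compare against the intended argument: your route is essentially it. The second part is the standard telescoping annulus estimate (the same device as Lemma \ref{lemma:modulus_log}), and the first part correctly identifies the crux — an arbitrary metric surface of locally finite area carries no a priori lower modulus bounds, so one must transport $\Gamma(\phi^{-1}(E),\phi^{-1}(D);W)$ to the Euclidean side and invoke the Loewner property there; using Theorem \ref{theorem:wqc} on a simply connected neighborhood of the fiber is the natural way to do this, and it is not circular, since the proof of Theorem \ref{theorem:wqc} (polyhedral approximation, classical uniformization, Arzel\`a--Ascoli) nowhere uses the present statement. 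Two cosmetic remarks: the set $U$ is superfluous (every curve of $\Gamma(E,D;V)$ already maps to a non-constant curve through $y_0$ because $y_0\notin h(D)$), and at the end of the second part countable subadditivity of modulus suffices in place of ``continuity from below.'' An alternative, more self-contained proof of the first part would replace Theorem \ref{theorem:wqc} by the universal lower bound $\Mod_2\Gamma(Q)\cdot\Mod_2\Gamma^*(Q)\geq (\pi/4)^2$ of Theorem \ref{theorem:reciprocal_simplify}, applied to a quadrilateral straddling $E$, but your version is cleaner.
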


\begin{lemma}[\cite{MeierNtalampekos:rigidity}*{Lemma 2.13}]\label{lemma:upgrade}
Let $X,Y$ be metric surfaces without boundary and with locally finite Hausdorff $2$-measure such that $Y$ is reciprocal. Then every weakly quasiconformal map $h\colon X\to Y$ is quasiconformal, quantitatively.
\end{lemma}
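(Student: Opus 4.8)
The plan is to first promote $h$ to a homeomorphism, then observe that reciprocity of $Y$ forces $X$ to be reciprocal as well, and finally transport the problem through Ikonen's uniformization theorem to a statement about homeomorphisms between smooth Riemannian surfaces, where classical planar quasiconformal theory applies. Throughout, $K\geq 1$ denotes the weak quasiconformality constant of $h$ and $\kappa\geq 1$ the reciprocity constant of $Y$. To begin with, since $Y$ is reciprocal it satisfies \eqref{ireciprocality:3}, and this implies that the conformal modulus of the family of non-constant curves through any point $y\in Y$ vanishes: such a curve has, for each sufficiently small $R>0$ and every $r>0$, a subcurve lying in $\Gamma(\br B(y,r),Y\setminus B(y,R);Y)$, so letting $r\to 0^+$ and then taking a countable union over $R=1/n$ gives the claim. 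By Theorem \ref{theorem:upgrade_homeo}, $h$ is a homeomorphism, and it remains to establish the reverse modulus inequality $\Mod_2 h(\Gamma)\leq K'\Mod_2\Gamma$ for every curve family $\Gamma$ in $X$, with $K'$ depending only on $K$.

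Next I would verify that $X$ is reciprocal. If $Q\subset X$ is a quadrilateral, then $h(Q)$ is a quadrilateral in $Y$ with $h(\Gamma(Q))=\Gamma(h(Q))$ and $h(\Gamma^*(Q))=\Gamma^*(h(Q))$. Applying the weak quasiconformality inequality to $\Gamma(Q)$ and to $\Gamma^*(Q)$, and then the upper bound in the reciprocity of $Y$, yields
\[
\Mod_2\Gamma(Q)\cdot\Mod_2\Gamma^*(Q)\leq K^2\,\Mod_2\Gamma(h(Q))\cdot\Mod_2\Gamma^*(h(Q))\leq K^2\kappa .
\]
By Theorem \ref{theorem:reciprocal_simplify}, $X$ is reciprocal with constant $K^2\kappa$.

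Since $X$ and $Y$ are reciprocal, hence locally reciprocal, Theorem \ref{theorem:reciprocal_local} provides complete Riemannian surfaces $W,Z$ together with quasiconformal homeomorphisms $\alpha\colon W\to X$ and $\phi\colon Z\to Y$, which may in addition be taken to satisfy the universal bounds \eqref{qc:optimal}. Set $F:=\phi^{-1}\circ h\circ\alpha\colon W\to Z$. Composing the modulus inequalities for $\alpha$, for $h$, and for $\phi^{-1}$ shows that $F$ is a weakly quasiconformal homeomorphism with constant controlled by $K$. Since conformal modulus is unchanged by a conformal change of the Riemannian metric, in isothermal coordinates $F$ is locally a homeomorphism between planar domains satisfying $\Mod_2\Gamma\leq K''\Mod_2 F(\Gamma)$ for all $\Gamma$; equivalently, $F^{-1}$ has bounded outer dilatation. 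By classical two-dimensional quasiconformal theory (cf.\ the equivalence of the geometric and analytic definitions, Theorem \ref{theorem:definitions_qc}), a homeomorphism of planar domains with bounded outer dilatation is quasiconformal with controlled constant; hence $F^{-1}$, and therefore $F$, is quasiconformal with constant depending only on $K$. Consequently $h=\phi\circ F\circ\alpha^{-1}$ is a composition of quasiconformal homeomorphisms and is itself quasiconformal, quantitatively.

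I expect the substantive content to reside in the two inputs invoked in the last step — Ikonen's uniformization theorem (which rests on Rajala's construction) and the classical analytic theory of planar quasiconformal maps — rather than in the reductions above, which are essentially formal. If one prefers to sidestep Ikonen's theorem, the principal obstacle instead becomes the following: the computation of the second step already gives $\Mod_2\Gamma(h(Q))\leq K_1\Mod_2\Gamma(Q)$ for every quadrilateral $Q\subset X$, upon combining the reciprocity bound for $h(Q)$ in $Y$ with the universal lower bound $\Mod_2\Gamma(Q)\cdot\Mod_2\Gamma^*(Q)\geq(\pi/4)^2$ valid on any metric surface of locally finite Hausdorff $2$-measure; one must then upgrade this estimate for rectangular curve families to the full inequality $\Mod_2 h(\Gamma)\leq K'\Mod_2\Gamma$, which amounts to deriving the Sobolev regularity of $h^{-1}$ from modulus control of quadrilaterals and is the technical heart of the geometric-to-analytic passage for quasiconformal maps on metric surfaces.
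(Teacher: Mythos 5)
Your proposal is correct. Note that the survey itself gives no proof of this lemma --- it is quoted from \cite{MeierNtalampekos:rigidity}*{Lemma 2.13} --- so there is nothing to compare line by line, but your route is the natural one and, as far as I can tell, essentially the intended argument: reciprocity of $Y$ gives condition \eqref{ireciprocality:3}, hence every point of $Y$ has zero capacity and Theorem \ref{theorem:upgrade_homeo} upgrades $h$ to a homeomorphism; the quadrilateral computation together with Theorem \ref{theorem:reciprocal_simplify} transfers reciprocity to $X$; and Theorem \ref{theorem:reciprocal_local} reduces everything to a homeomorphism $F$ between Riemannian surfaces satisfying a one-sided modulus inequality, where the classical planar fact that a bound on the (outer) dilatation of $F^{-1}$ already makes $F^{-1}$, and hence $F$, quasiconformal with the two-sided estimate. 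All the black boxes you invoke are stated in the survey or are genuinely classical, the constants compose quantitatively (one gets roughly $4K$ for the reverse inequality), and your closing remark correctly locates where the real work would lie if one refused to use the uniformization theorems.
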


\begin{remark}
Theorem \ref{theorem:wqc} implies that every surface of locally finite Hausdorff $2$-measure admits local parametrizations by the complex plane so that conformal modulus in the plane is smaller than conformal modulus on the surface (times the constant $4/\pi$). On the other hand, in view of Lemma \ref{lemma:upgrade}, for non-reciprocal surfaces we cannot have local parametrizations that shrink modulus. Compare to Remark \ref{remark:reciprocal}.
\end{remark}

In the case of arbitrary surfaces the following uniformization theorem is proved in \cite{NtalampekosRomney:nonlength}, generalizing Theorem \ref{theorem:wqc}. See also \cite{Meier:higher} for an alternative proof with the additional assumption that $X$ is locally geodesic.

\begin{theorem}[\cite{NtalampekosRomney:nonlength}*{Theorem 1.3}]
Let $X$ be a metric surface (with boundary) of locally finite Hausdorff $2$-measure. Then there exists a complete Riemannian surface $(Z,g)$ of constant curvature that is homeomorphic to $X$ and a weakly $(4/\pi)$-quasiconformal map $h\colon Z \to X$. 
\end{theorem}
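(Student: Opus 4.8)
The plan is to deduce this statement from Theorem~\ref{theorem:wqc}, which already handles the simply connected case without boundary, by two reductions: doubling across the boundary, and rerunning the proof scheme of Theorem~\ref{theorem:wqc} in the general topological setting. I would first dispose of the boundary. If $X$ has non-empty boundary, form the metric double $DX=X\cup_{\partial X}X$ with the length metric obtained by gluing two isometric copies of $X$; this is a metric surface without boundary of locally finite Hausdorff $2$-measure, carrying the natural isometric involution $\iota$ with $DX/\iota\cong X$. Suppose one produces, $\iota$-equivariantly, a weakly $(4/\pi)$-quasiconformal map $h'\colon Z'\to DX$ from a complete constant-curvature surface $Z'\cong DX$ together with an isometric involution $\hat\iota$ of $Z'$ satisfying $h'\circ\hat\iota=\iota\circ h'$. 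Then $Z:=Z'/\hat\iota$ is a complete constant-curvature surface with boundary homeomorphic to $X$, and $h'$ descends to $h\colon Z\to X$. The sharp constant survives this quotient: any curve family $\Lambda$ whose curves lie in the half $Z\subset Z'$ has the same $2$-modulus computed in $Z$ or in $Z'$ (an admissible function on $Z'$ may be taken to vanish off $Z$), and likewise $\Mod_2 h'(\Lambda)=\Mod_2 h(\Lambda)$ since $h(\Lambda)$ lies in the corresponding half of $DX$; hence $\Mod_2\Lambda\le\frac{4}{\pi}\Mod_2 h(\Lambda)$. It therefore suffices to construct the (equivariant) uniformization of a metric surface without boundary.

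For a surface $X$ without boundary I would rerun the proof of Theorem~\ref{theorem:wqc} with two modifications. First, the polyhedral approximation theorem (Theorem~\ref{thm:extended_polyhedral_approximation}) is applied in its general form to produce polyhedral surfaces $X_n$ \emph{homeomorphic to $X$} (not spheres), with uniformly bounded area, converging to $X$ in the Gromov--Hausdorff sense; in the doubling situation one chooses the $X_n$ so that $DX_n$ approximates $DX$, which makes the construction $\iota$-equivariant. Second, in place of the classical uniformization of the sphere one invokes Theorem~\ref{theorem:uniformization_riemannian}: each polyhedral $X_n$ is a Riemann surface, so the underlying topological surface carries a complete constant-curvature Riemannian metric $g_n$ compatible with its complex structure, and the associated map $h_n\colon (Z_n,g_n)\to X_n$ is conformal, hence $1$-quasiconformal. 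Exactly as in the outline of Theorem~\ref{theorem:wqc}, the uniform area bound makes the $h_n$ equicontinuous with $L^2$-bounded minimal weak upper gradients, so a subsequence converges to a continuous surjection $h\colon Z\to X$; the topological properties (uniform limit of homeomorphisms, cell-like, proper) are inherited from the $h_n$, and the bound $\Mod_2\Gamma\le\frac{4}{\pi}\Mod_2 h(\Gamma)$ comes, as before, from the delicate quantitative part of the approximation theorem (the analogue of inequality~\eqref{ineq:polyhedral_approx}).

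The main obstacle — the genuinely new ingredient beyond Theorem~\ref{theorem:wqc} — is that the moduli space of Riemann surfaces of positive genus is non-compact, so a priori the constant-curvature metrics $g_n$ could degenerate and the limit could fail to be a complete surface homeomorphic to $X$. I would exclude this via Mumford's compactness criterion, for which it suffices to bound the $g_n$-systoles below. If the shortest essential $g_n$-geodesic had length $\varepsilon_n\to 0$, then by the collar lemma $X_n$ would contain an embedded essential annulus $A_n$ of conformal modulus comparable to $1/\varepsilon_n\to\infty$; since conformal modulus is a conformal invariant and $\mathcal H^2(X_n)$ is uniformly bounded, the elementary estimate $\ell(A_n)^2\le \mathcal H^2(A_n)/\operatorname{Mod}(A_n)$ (with $\ell(A_n)$ the length of the shortest curve in $A_n$ homotopic to its core) shows $X_n$ would contain essential curves of length tending to $0$; but $X_n\to X$ in the Gromov--Hausdorff sense, and $X$, being a metric surface homeomorphic to a closed surface, has positive essential systole, so a short essential curve in $X_n$ would push forward under the approximate isometries to a short essential curve in $X$ — a contradiction. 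Hence the $g_n$ remain in a compact part of moduli space and, after precomposition with diffeomorphisms, subconverge (smoothly on compact sets) to a complete constant-curvature metric $g$ on a surface $Z\cong X$, with the $h_n$ converging accordingly; for $X\cong\mathbb S^2$ there is no moduli and this step is vacuous (recovering the sphere case of Theorem~\ref{theorem:wqc}), and for the torus the same short-curve argument applies.

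Finally, for a non-compact $X$ the limiting procedure must be localized: the approximation then carries only local area bounds and one obtains only pointed Gromov--Hausdorff convergence, so the passage to the limit uses an exhaustion by compact subsurfaces together with a normal-families argument, in the spirit of the exhaustion step of Rajala's theorem (Theorem~\ref{theorem:rajala}) and of the classical proof of the uniformization theorem (Theorem~\ref{theorem:uniformization_classical}); this again produces a complete constant-curvature $Z\cong X$ and a limit map $h\colon Z\to X$, with weak $(4/\pi)$-quasiconformality passing to the limit by lower semicontinuity of modulus. Combining the three cases — closed, compact with boundary, non-compact — completes the proof. The bulk of the analytic work is thus imported from Theorem~\ref{theorem:wqc} and its underlying polyhedral approximation theorem; what is new and forms the crux of the argument is the control of the approximating conformal structures, ensuring the limit lands on a complete Riemannian surface of the correct topological type.
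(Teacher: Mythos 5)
The survey states this theorem only by citation and contains no proof of it, so your argument can only be measured against the ingredients the paper does supply: the outline of Theorem \ref{theorem:wqc}, the polyhedral approximation theorem (Theorem \ref{thm:extended_polyhedral_approximation}), and the classical uniformization theorem. Against that background, the core of your plan is the natural one and you have correctly isolated the genuinely new difficulty beyond the simply connected case: after approximating $X$ by polyhedral surfaces $X_n$ of the same topological type and uniformizing each conformally by a constant-curvature surface $(Z_n,g_n)$, one must prevent the conformal structures from degenerating before passing to the limit. Your mechanism for this---a short systole yields an essential annulus of large conformal modulus, which via the uniform area bound and the length--area inequality yields essential curves of small diameter in $X_n$, which transport under the approximate isometries to essential curves of small diameter in $X$ (the retractions $R_n$ make $\pi_1(f_n(X_n))\to\pi_1(X)$ injective, so essentiality survives), impossible on a fixed closed surface---is sound for closed $X$ and is the kind of argument the situation requires.

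Two steps, however, have genuine gaps. First, the reduction of the boundary case by equivariant doubling is not justified: Theorem \ref{thm:extended_polyhedral_approximation} is not an equivariant statement, and the maps $f_n\colon X_n\to X$ it provides are merely proper embeddings onto $\varepsilon_n$-dense subsets---nothing forces $f_n(\partial X_n)\subset\partial X$---so the assertion that one can ``choose the $X_n$ so that $DX_n$ approximates $DX$'' does not follow from anything available; note also that Theorem \ref{thm:extended_polyhedral_approximation} is already stated for surfaces with boundary, so the doubling detour replaces a tool you have with one you would still need to build. Second, the non-compact case is dispatched in one sentence, but it is not a routine exhaustion: for surfaces of infinite topological type one must control the conformal type of the ends of the limiting $(Z,g)$ (a type problem, exactly the issue the exhaustion step of Theorem \ref{theorem:rajala} must confront even for simply connected surfaces), and the modulus inequality for the limit map is not a consequence of ``lower semicontinuity of modulus''---as the survey's outline of Theorem \ref{theorem:wqc} stresses, it rests on the quantitative area inequality \eqref{ineq:polyhedral_approx}, which must be applied on compact pieces and assembled over the exhaustion. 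These are the places where the cited proof has to do real work that the sketch does not.
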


We note that weakly quasiconformal parametrizations of a surface are not canonical or unique, as illustrated by the next example. 

\begin{example}
There exists a metric surface $X$ of locally finite Hausdorff $2$-measure that is homeomorphic to $\C$ such that there exist two weakly $1$-quasi\-conformal maps, one from $\D$ onto $X$ and one from $\C$ onto $X$ \cite{NtalampekosRomney:length}*{Example 8.4}. Note that if $X$ were reciprocal, by Lemma \ref{lemma:upgrade} these two maps would be quasiconformal homeomorphisms, so there would exist a quasiconformal homeomorphism between $\C$ and $\D$, a contradiction to Liouville's theorem.  Therefore, such a surface $X$ cannot be reciprocal. 
\end{example}

On the other hand, if there exists an essentially unique weakly quasiconformal parametrization of a surface, we expect some strong consequences.

\begin{problem}[cf.\ \cite{CreutzRomney:branch}*{Question 1.6}]
Let $X$ be a metric $2$-sphere of finite Hausdorff $2$-measure. Suppose that any two weakly quasiconformal maps from $\widehat \C$ onto $X$ differ by a quasiconformal map of $\widehat \C$. Then show that $X$ is reciprocal. 
\end{problem}

We discuss briefly the boundary behavior of weakly quasiconformal maps. Recall Cara\-th\'eo\-dory's theorem, which implies that a conformal map from $\D$ onto a Jordan region $\Omega\subset \C$ extends to a homeomorphism of the closures. Ikonen generalized that result to quasiconformal maps on metric surfaces \cite{Ikonen:jordan}. We formulate a slightly more general version of Ikonen's theorem. 

\begin{theorem}\label{theorem:ikonen_wqc}
Let $X$ be a metric surface whose completion $\br X$ is homeomorphic to $\br \D$, the set $\partial X= \br X\setminus X$ is homeomorphic to the unit circle $\mathbb S^1$, and $\br X$ has finite Hausdorff $2$-measure. Let $f\colon \D\to X$ be a map that is weakly $K$-quasiconformal for some $K\geq 1$. Then $f$ extends to a weakly $K$-quasiconformal map $F\colon \br \D \to \br X$. Moreover, if $X$ is reciprocal and each ball centered at a point of $\partial X$ satisfies \eqref{ireciprocality:3}, then $F$ is a quasiconformal homeomorphism.
\end{theorem}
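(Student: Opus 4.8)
The plan is to produce the extension in three stages: first construct $F$ as a continuous map, then verify that it is weakly $K$-quasiconformal, and finally upgrade it to a quasiconformal homeomorphism under the reciprocity hypothesis; the scheme follows Ikonen's theorem \cite{Ikonen:jordan}, carried out with ``weakly quasiconformal'' replacing ``quasiconformal''. Throughout I freely use that a weakly quasiconformal map into a metric surface of finite Hausdorff $2$-measure lies in $N^{1,2}$ with energy bounded by the target area and satisfies Lusin's condition (N) (see \cites{NtalampekosRomney:length,NtalampekosRomney:nonlength}); thus $f\in N^{1,2}(\D,X)$, $\int_\D g_f^2\,d\mathcal H^2\le C(K)\,\mathcal H^2(\br X)<\infty$, and $\mathcal H^2(f(E))\le C\int_E g_f^2\,d\mathcal H^2$ for Borel $E\subset\D$. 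As a first step, fix $\xi\in\mathbb S^1$ and for $r\in(0,1)$ let $C_r=\{z:|z-\xi|=r\}\cap\D$, a crosscut of $\D$. A Cauchy--Schwarz and polar-coordinate (Fubini) estimate yields
\[
\int_0^{r_0}\frac{1}{r}\Bigl(\int_{C_r}g_f\,ds\Bigr)^{2}dr\ \le\ C\int_{B(\xi,r_0)\cap\D}g_f^2\,d\mathcal H^2,
\]
and the right side tends to $0$ as $r_0\to0$ by absolute continuity of the integral. Hence there are radii $r_n\downarrow0$ with $\length(f(C_{r_n}))\to0$, so the continua $L_n:=\br{f(C_{r_n})}$ have $\diam(L_n)\to0$.

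\emph{Continuous extension.} Properness of $f$ forces $f(z)$ to exit every compact subset of $X$ as $z\to\partial\D$; hence every subsequential limit of $f(z)$ as $z\to\xi$, every point of $L_n$ for large $n$, and every point of $K_\infty:=\bigcap_n\br{f(D_n\cap\D)}$ (with $D_n=B(\xi,r_n)$) lies in $\partial X$. Since $C_{r_n}$ separates $D_n\cap\D$ from $\D\setminus\br{D_n}$ in $\D$ and $f$ is monotone, one checks that the boundary of $f(D_n\cap\D)$ in $\br X$ is contained in $L_n\cup\partial X$; moreover $\mathcal H^2(f(D_n\cap\D))\le C\int_{D_n\cap\D}g_f^2\to0$. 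Using this, the nesting of the sets $f(D_n\cap\D)$, the collapse $\diam(L_n)\to0$, the properness of $f$, the approximation of $f$ by homeomorphisms $\D\to X$, and the topology of the disk $\br X\cong\br\D$, one shows that the nested continua $\br{f(D_n\cap\D)}$ shrink to a single point of $\partial X$, independent of the good radii chosen (a common refinement of two good sequences is again good). Setting $F|_\D=f$ and letting $F(\xi)$ be this point defines $F\colon\br\D\to\br X$; since $f(z_j)\in\br{f(D_n\cap\D)}$ for all large $j$ whenever $z_j\to\xi$ in $\D$, continuity of $F$ at every point of $\br\D$ follows. This passage — from ``crosscuts of vanishing image length'' to ``cut-off regions shrinking to a point'' — is the technical heart and the main obstacle: a crosscut of small diameter in a surface whose boundary need not be rectifiable can a priori bound a large region, so one must genuinely intertwine the absolute continuity of the finite energy, properness, the modulus inequality, and the approximation by homeomorphisms, ruling out in particular that $\br{f(D_n\cap\D)}$ converges to a nondegenerate subarc of $\partial X$.

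\emph{$F$ is weakly $K$-quasiconformal.} The map $F$ is proper (compact domain), continuous, and surjective ($F(\br\D)$ is compact and contains $X=f(\D)$, hence equals $\br X$). Because $\mathcal H^2(\partial X)=\mathcal H^2(F(\partial\D))\le C\int_{\partial\D}g_F^2\,d\mathcal H^2=0$ and $\mathcal H^2(\partial\D)=0$, the inequality $\Mod_2\Gamma\le K\Mod_2 F(\Gamma)$ for curve families $\Gamma$ in $\br\D$ reduces to the same inequality for $f$ on curve families in $\D$. Finally $F$ is cell-like: for $p\in X$ one has $F^{-1}(p)=f^{-1}(p)$, a cell-like continuum; for $p\in\partial X$, writing $F^{-1}(p)=\bigcap_n\br{f^{-1}(W_n)}$ for a neighborhood basis $\{W_n\}$ of $p$ in $\br X$ by connected open sets and using that the monotone map $f$ pulls back connected open sets to connected sets, $F^{-1}(p)$ is a continuum contained in $\mathbb S^1$, hence cell-like. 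Thus $F$ is weakly $K$-quasiconformal.

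\emph{The reciprocal case.} Assume in addition that $X$ is reciprocal and every ball of $\br X$ centered on $\partial X$ satisfies \eqref{ireciprocality:3}. Since $X=\inter(\br X)$ is reciprocal, the upper bound in \eqref{ireciprocality:12} holds for all quadrilaterals in $\inter(\br X)$; together with the boundary-ball hypothesis, Theorem \ref{theorem:reciprocal_simplify} shows that $\br X$ is reciprocal. By Lemma \ref{lemma:upgrade}, $f=F|_\D\colon\D\to X$ is quasiconformal, hence a homeomorphism. It remains to show $F|_{\mathbb S^1}\colon\mathbb S^1\to\partial X$ is injective: then $F$ is a continuous bijection from the compact space $\br\D$ onto the Hausdorff space $\br X$, hence a homeomorphism, which reciprocity of $\br X$ upgrades to a $K$-quasiconformal map (a boundary analogue of Lemma \ref{lemma:upgrade}; cf.\ Theorem \ref{theorem:reciprocal_local}). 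Suppose $F(\xi_1)=F(\xi_2)=p$ with $\xi_1\neq\xi_2$, and let $\alpha\subset\D$ be a crosscut from $\xi_1$ to $\xi_2$, splitting $\D$ into $\D_1,\D_2$ and $\mathbb S^1\setminus\{\xi_1,\xi_2\}$ into arcs $J_1,J_2$. Since $f$ is a homeomorphism with $f(z)\to p$ as $z$ tends to either endpoint of $\alpha$, the set $\br{f(\alpha)}=f(\alpha)\cup\{p\}$ is a Jordan curve in $\br X$ meeting $\partial X$ only at $p$; it bounds a Jordan region $R$ with $\br R\cap\partial X=\{p\}$, and the index $i$ with $f(\D_i)=\inter R$ satisfies $F(J_i)\subset\br{f(\D_i)}\cap\partial X=\{p\}$. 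Choosing $\eta\in J_{3-i}$ with $q:=F(\eta)\neq p$ and a small arc $I_\eta\ni\eta$ with $F(I_\eta)\subset\br X\setminus B(p,d(p,q)/2)$, the family $\Gamma_0=\Gamma(J_i,I_\eta;\br\D)$ has $\Mod_2\Gamma_0>0$, a fixed positive number, while $F(\Gamma_0)\subset\Gamma(\br B(p,\delta),\br X\setminus B(p,d(p,q)/2);\br X)$ for every $\delta>0$ because $F(J_i)=\{p\}$. Weak $K$-quasiconformality then gives $\Mod_2\Gamma(\br B(p,\delta),\br X\setminus B(p,d(p,q)/2);\br X)\ge K^{-1}\Mod_2\Gamma_0>0$ for all $\delta>0$, contradicting \eqref{ireciprocality:3} at $p$. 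Hence $F|_{\mathbb S^1}$ is injective, completing the proof.
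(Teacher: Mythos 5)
The survey states this theorem without proof (it is presented as ``a slightly more general version of Ikonen's theorem'' with a pointer to \cite{Ikonen:jordan}), so your proposal can only be judged on its own terms. Your overall architecture --- length--area selection of crosscuts $C_{r_n}$ with $\length(f(C_{r_n}))\to 0$, a topological shrinking argument for the continuous extension, reduction of the modulus inequality for $F$ to that for $f$, and the crosscut/annulus modulus contradiction for boundary injectivity --- is the right one, and the injectivity argument in the reciprocal case is essentially complete and correct. But there are two genuine problems. The first is that the heart of the theorem, namely that the nested continua $\br{f(B(\xi,r_n)\cap\D)}$ collapse to a single point of $\partial X$ rather than to a nondegenerate subarc, is exactly the step you announce (``one shows that\ldots'') without proving. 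Listing the ingredients is not enough here: the only modulus inequality available for a weakly quasiconformal map is $\Mod_2\Gamma\le K\Mod_2 f(\Gamma)$, which produces \emph{lower} bounds on image modulus, so to get a contradiction from a nondegenerate limit continuum one must exhibit a curve family in $B(\xi,r_n)\cap\D$ whose domain modulus is bounded below \emph{and} whose image modulus can be bounded above directly (e.g.\ because the image curves have definite diameter but live in $f(B(\xi,r_n)\cap\D)$, whose area tends to $0$). Arranging the lower bound on the domain modulus requires controlling the preimages of boundary balls inside the shrinking half-disks, and this is where the finiteness of $\mathcal H^2(\br X)$ (not merely of $\mathcal H^2(X)$) and the approximation by homeomorphisms must actually be deployed; as written, the proof of continuity is missing.

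The second problem is the assertion $\mathcal H^2(\partial X)=\mathcal H^2(F(\partial\D))\le C\int_{\partial\D}g_F^2\,d\mathcal H^2=0$. The inequality $\mathcal H^2(f(E))\le C\int_E g_f^2\,d\mathcal H^2$ is a Lusin $(N)$--type bound that weakly quasiconformal (indeed even quasiconformal) maps do not satisfy: the survey itself records Rajala's example of a quasiconformal map onto a metric surface failing Lusin $(N)$, and the analytic characterization in Theorem \ref{theorem:definitions_qc} controls $\int_{h^{-1}(E)}g^2$ by $\mathcal H^2(E)$, i.e.\ the opposite direction. Moreover the conclusion is false in general: $\partial X$ is a topological circle in $\br X$ and may well have positive $\mathcal H^2$-measure --- this is precisely why the hypotheses involve $\mathcal H^2(\br X)<\infty$ and a separate boundary condition \eqref{ireciprocality:3}. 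Fortunately your reduction does not actually need $\mathcal H^2(\partial X)=0$: given $\rho'$ admissible for $F(\Gamma)$ in $\br X$, the function $g_f\cdot(\rho'\circ f)$ on $\D$, extended by $+\infty$ on $\partial\D$ (a set of Lebesgue measure zero), is admissible for all curves of $\br\D$ outside an exceptional family of zero modulus, and its energy is controlled by $K\int\rho'^2\,d\mathcal H^2$ via Theorem \ref{theorem:definitions_qc}\ref{def:i}; you should replace the Lusin $(N)$ step by this argument. Finally, the last sentence of your reciprocal case (upgrading the homeomorphism $F$ to a two-sided quasiconformal map on the manifold with boundary) appeals to an unproved ``boundary analogue'' of Lemma \ref{lemma:upgrade}; this is a smaller gap but should also be addressed.
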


\subsection{Polyhedral approximation of metric surfaces}
We present the main tool that is used in the proof of Theorem \ref{theorem:wqc} and which is interesting in its own right.

\begin{theorem}[\cite{NtalampekosRomney:nonlength}*{Theorem 1.1}] \label{thm:extended_polyhedral_approximation}
Let $X$ be a metric surface (with boundary) of locally finite Hausdorff $2$-measure. There exists a sequence of polyhedral surfaces $\{(X_n,d_{X_n})\}_{n=1}^\infty$ each homeomorphic to $X$, where $d_{X_n}$ is a metric that is locally isometric to the intrinsic metric on $X_n$, such that the following properties hold for an absolute constant $K \geq 1$.  
\begin{enumerate}[label=\normalfont(\arabic*)]
    \item \label{item:main_1} There exists an approximately isometric sequence of maps $f_n \colon X_n \to X$, $n \in \mathbb{N}$. Moreover, for each $n\in \N$, the  map $f_n$ is a proper topological embedding.
    \item \label{item:main_2} For each {compact} set $A \subset X$, 
\begin{align}\label{ineq:polyhedral_approx}
	\limsup_{n \to \infty} \mathcal{H}^2(f_n^{-1}(A)) \leq K \mathcal{H}^2(A).
\end{align}    
    \item\label{item:main_3} There exists an approximately isometric sequence of retractions $R_n \colon X\to f_n(X_n)$, $n\in \N$. 
\end{enumerate} 
If $X$ is a length space, then we may take $d_{X_n}$ to be the intrinsic metric on $X_n$.
\end{theorem}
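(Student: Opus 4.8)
The plan is to build the surfaces $X_n$ at a sequence of scales $\delta_n \to 0$ by triangulating $X$ with finer and finer triangulations, replacing each topological triangle by a Euclidean model triangle with the same side lengths, and then to read off the three conclusions by comparing the combinatorics of the triangulation with the Hausdorff $2$-measure of $X$. I would first treat the compact case and then pass to the general case by exhausting $X$ by compact subsurfaces and carrying out the construction coherently across the exhaustion; in the general case the polyhedral model is allowed to properly embed into $X$ rather than cover it, which is why the retraction $R_n$ is needed. Throughout, ``approximately isometric'' means an $\varepsilon_n$-isometry with $\varepsilon_n \to 0$.

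\emph{Construction.} Fix $n$ and write $\delta=\delta_n$. Using Rad\'o's theorem that every topological surface is triangulable, together with an \emph{adaptive} refinement, I would produce a triangulation $T_n$ of $X$ with two properties: (a) $\mesh(T_n)<\delta$, that is, every simplex has $d_X$-diameter less than $\delta$; and (b) $T_n$ is \emph{area-efficient}, namely $\sum_{\sigma\in T_n}\diam(\sigma)^2 \le C\,\mathcal{H}^2(X)+\delta$ for an absolute constant $C$. Property (b) is the key point, and it is where the definition of Hausdorff $2$-measure enters: one fixes a covering of $X$ by sets of diameter $<\delta$ that (locally) nearly realizes $\mathcal{H}^2$ and builds $T_n$ subordinate to it, so that each simplex lies in some member of the covering and each member contains only boundedly many simplices. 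Next, each edge $e=[ab]$ of $T_n$ is replaced by an arc $\widetilde e$ joining $a$ to $b$ inside a small neighborhood of $e$, of $d_X$-length at most $d_X(a,b)+\delta^2$ when $X$ is a length space; after a small perturbation the side lengths of each triangle $\sigma=[abc]$ satisfy the strict triangle inequality, so one may form the Euclidean comparison triangle $\widetilde\sigma$ with those side lengths. Gluing the $\widetilde\sigma$ along common edges produces a polyhedral surface $X_n$, homeomorphic to $X$, equipped with the metric $d_{X_n}$ that is locally isometric to its intrinsic metric, and globally so when $X$ is a length space. The embedding $f_n\colon X_n\to X$ is defined on each $\widetilde\sigma$ as a homeomorphism onto the Jordan region bounded by the arcs $\widetilde e$, compatibly with the edge identifications; the retraction $R_n\colon X\to f_n(X_n)$ moves every point within its containing cell of the triangulation onto $f_n(X_n)$.

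\emph{Conclusions \ref{item:main_1} and \ref{item:main_3}.} Since all simplices, arcs $\widetilde e$, model triangles $\widetilde\sigma$, and complementary regions have diameter $O(\delta)$, two points of $X_n$ lying in a common model triangle satisfy $|d_{X_n}(p,q)-d_X(f_n(p),f_n(q))|=O(\delta)$, and $R_n$ moves every point by $O(\delta)$, which handles distances at scale $\lesssim\delta$ and gives \ref{item:main_3}. For $p,q$ in distinct simplices one compares $d_{X_n}(p,q)$ with $d_X(f_n(p),f_n(q))$ by shadowing a near-shortest curve of $X$ between $f_n(p)$ and $f_n(q)$ by a path of $X_n$ through the corresponding chain of model triangles, and conversely; here the choice of near-shortest edges and the fact that such a shadow path accrues only $O(\delta)$ error per passage through a simplex, while crossing on the order of $1/\delta$ simplices per unit length, must be balanced so that the total error still tends to $0$. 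That $f_n$ is a proper topological embedding is clear since the arcs $\widetilde e$ form an embedded graph bounding disjoint Jordan regions.

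\emph{Conclusion \ref{item:main_2} (the crux).} As $X_n$ is a flat polyhedral surface, $\mathcal{H}^2$ on $X_n$ is Lebesgue area, so for compact $A\subset X$,
\begin{align*}
\mathcal{H}^2\big(f_n^{-1}(A)\big) \;\le\; \sum\big\{\area(\widetilde\sigma):\; f_n(\widetilde\sigma)\cap A\ne\emptyset\big\} \;\le\; \tfrac{\sqrt3}{4}\sum\big\{\diam(\sigma)^2:\; \sigma\subset N_{C\delta_n}(A)\big\},
\end{align*}
using $\area(\widetilde\sigma)\le\tfrac{\sqrt3}{4}(\text{longest side of }\widetilde\sigma)^2$ up to the perturbation error. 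Each such $\sigma$ sits inside a member of the near-optimal covering meeting $N_{C\delta_n}(A)$, so area-efficiency of $T_n$ relative to that covering bounds the right-hand side by $K\big(\mathcal{H}^2(N_{C\delta_n}(A))+o(1)\big)$ with $K$ absolute; letting $n\to\infty$ and using $\mathcal{H}^2(N_{C\delta_n}(A))\to\mathcal{H}^2(A)$ yields \eqref{ineq:polyhedral_approx}. The main obstacle is exactly the simultaneous realization of (a) and (b) in an \emph{arbitrary} metric surface, and above all the non-length case: without a length structure the metric need not supply short connecting arcs — the intrinsic metric induced by $d_X$ may differ drastically from $d_X$ — so the edges $\widetilde e$ and the shadow paths must instead be manufactured combinatorially from the covering and the measure rather than from geodesics. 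This is the technical heart of \cite{NtalampekosRomney:nonlength} and is what goes beyond the length-space treatments of \cite{MeierWenger:uniformization} and \cite{NtalampekosRomney:length}.
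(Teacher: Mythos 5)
There is a genuine gap, and it sits exactly at what you correctly identify as the crux. Your area bound rests on two unjustified steps. First, the ``area-efficient'' triangulation (b), with $\sum_\sigma \diam(\sigma)^2 \le C\,\mathcal H^2(X)+\delta$, obtained by making the triangulation subordinate to a near-optimal Hausdorff covering with bounded multiplicity: a near-optimal covering for $\mathcal H^2$ consists of \emph{arbitrary} sets $E_i$, which may be thin, disconnected, or wildly shaped, and there is no reason one can find a triangulation whose $2$-simplices (Jordan regions!) each fit inside some $E_i$ while each $E_i$ contains only boundedly many simplices. Restricting the infimum in the definition of $\mathcal H^2$ to coverings that admit such subordinate triangulations could change its value by an unbounded factor; asserting (b) is essentially assuming the theorem. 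Second, even granting a fine triangulation, replacing a metric triangle by the Euclidean comparison triangle with the same side lengths gives no per-triangle area control: a geodesic triangle in a metric surface can be a slightly thickened tripod, with all three side lengths comparable to $2r$ but $\mathcal H^2$ arbitrarily small, while its comparison triangle is nearly equilateral with area $\approx \sqrt 3\, r^2$. So $\area(\widetilde\sigma)\le \frac{\sqrt3}{4}\diam(\sigma)^2$ is useless without (b), and (b) is unavailable.

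The paper's argument (in the length case, following \cite{NtalampekosRomney:length}) avoids both problems by proving the area bound \emph{triangle by triangle}, with no global counting. One takes a geometric (geodesic) triangulation with small mesh via Creutz--Romney, and then, instead of a comparison triangle, one shows that the entire boundary curve $\Delta=\partial T$ of each triangular region admits a $4$-bi-Lipschitz embedding $F$ into the plane (via an explicit formula with Gromov products); the planar Jordan region $\Omega$ bounded by $F(\Delta)$ then satisfies $\mathcal H^2(\Omega)\le C\,\mathcal H^2(T)$ by the Besicovitch inequality for metric spaces. For the thickened tripod, $F(\Delta)$ is a curve that goes out and back along three thin fingers and bounds small area --- precisely the information lost by recording only the three side lengths. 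A polygonal approximation of $\Omega$ is then glued back along $F$. You should replace your steps (b) and the comparison-triangle construction with this embedding-plus-Besicovitch mechanism; your treatment of conclusions (1) and (3) and the reduction of the Gromov--Hausdorff estimate to shadowing arguments is in the right spirit, though the accumulation of $O(\delta)$ errors over $\sim 1/\delta$ simplices that you flag also has to be handled by working with the near-isometric gluing maps rather than waved away.
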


The next example shows that the constant $K$ cannot be taken to be equal to $1$.
\begin{example}
Let $X$ be the unit square in $\R^2$ with the $\ell^\infty$ metric. Then any sequence $X_n$, $n\in \N$, of polyhedral surfaces satisfying the conclusions of Theorem \ref{thm:extended_polyhedral_approximation} necessarily satisfies
\begin{align*}
\liminf_{n\to\infty} \mathcal H^2(X_n)\geq \frac{4}{\pi} \mathcal H^2(X).
\end{align*}
See \cite{NtalampekosRomney:nonlength}*{Example 8.2} for details.
\end{example}

Given that the $\ell^\infty$ metric attains the optimal constants in \eqref{qc:optimal},  it is natural to pose the following question.

\begin{question}
Can we take $K=4/\pi$ in Theorem \ref{thm:extended_polyhedral_approximation}?
\end{question}

We provide an outline of the proof of Theorem \ref{thm:extended_polyhedral_approximation} in the case that $X$ is a length space, as given in \cite{NtalampekosRomney:length}. The general case is proved in \cite{NtalampekosRomney:nonlength} and involves some severe complications, but overall it follows the same scheme as in the case of length spaces.

\begin{proof}[Outline of proof]
Suppose that $X$ is a metric surface that is a length space without boundary. A result of Creutz and Romney \cite{CreutzRomney:triangulation} implies that for each $\varepsilon>0$ there exists a \textit{geometric triangulation} of $X$ with mesh less than $\varepsilon$. That is, $X$ can be written as a locally finite union of non-overlapping closed Jordan regions $T$, called triangular regions, such that  $\diam(T)<\varepsilon$ and $\partial T$ is the union of three non-overlapping geodesics. Note that a vertex of a triangular region can lie in an edge of another triangular region, so the notion of a triangulation here is different from the classical topological notion; see Figure \ref{figure:triangulation}. 

\begin{figure}

\begin{tikzpicture}
\node at (-7,0) {\includegraphics[scale=.2]{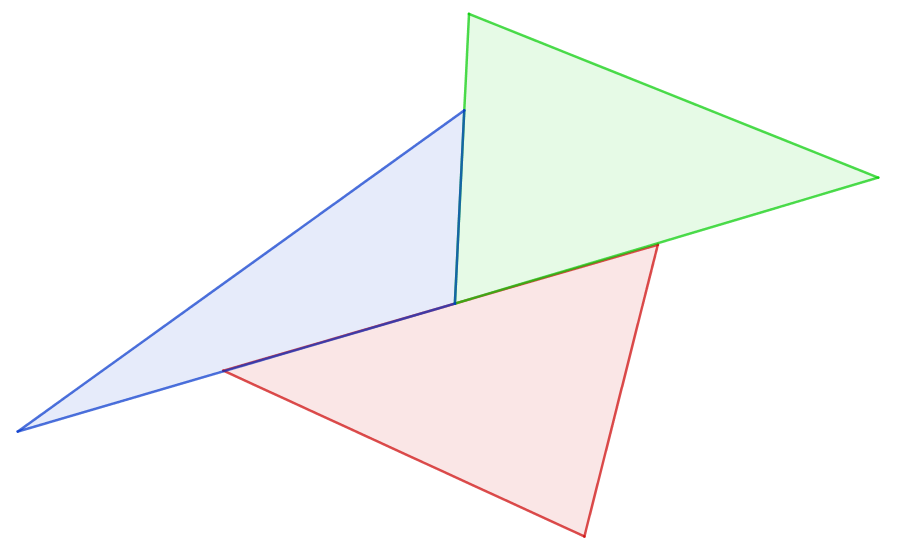}};

           \node () at (-2.3,-0.7) {\includegraphics[width=.25\textwidth]{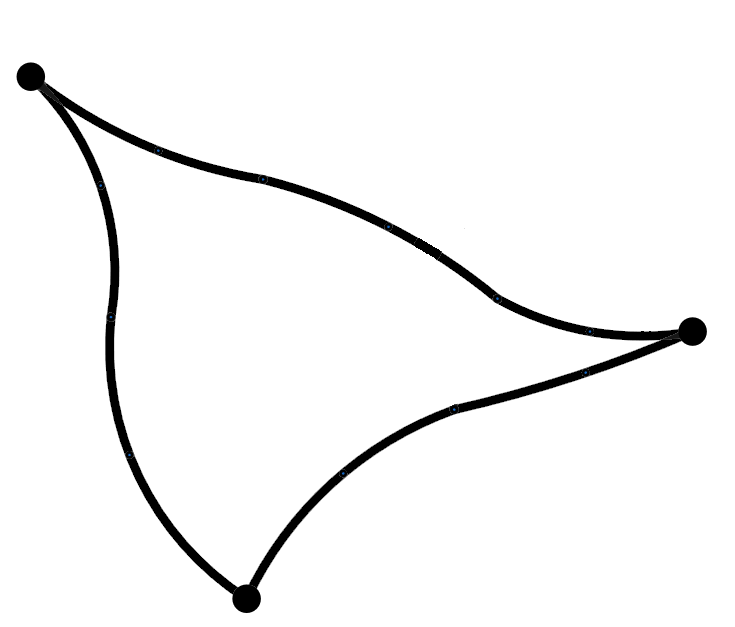}};
           \node at (1.1,0) {\includegraphics[width=.3\textwidth]{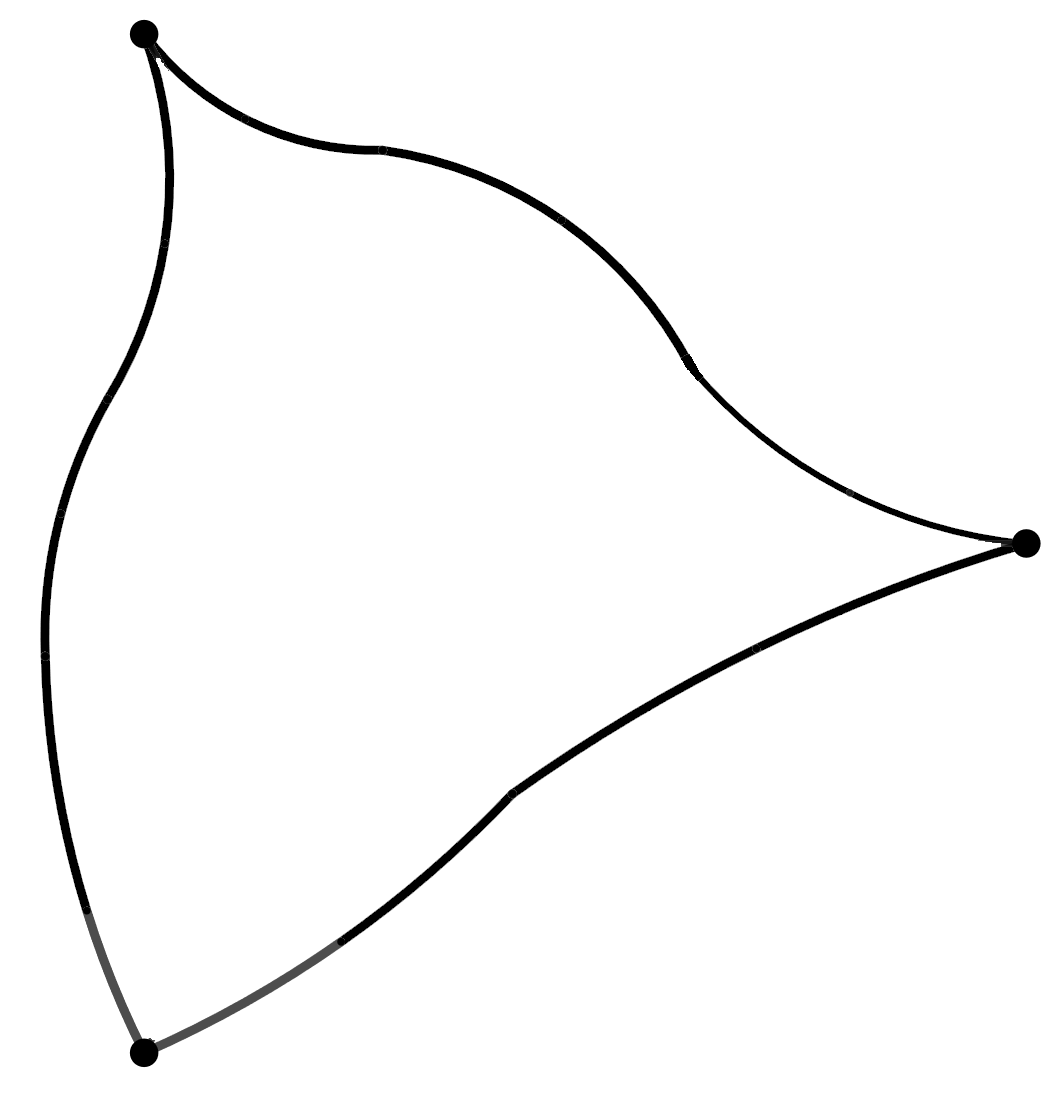}};         
           \draw [->] (-2.7,0.3) node[left] {$\Delta$} to [out=30,in=150]node[pos=0.5,above] {$F$} (-1.3, .3) node[right] {$\partial \Omega$};
           \node at (.7,0) {$\Omega$};
           \node at (-3,-0.5) {$T$};
\end{tikzpicture}
    
\caption{Left: An example of a triangulation given by \cite{CreutzRomney:triangulation}. Right: The embedding of $\Delta=\partial T$ into the plane.}\label{figure:triangulation}
\end{figure}

In order to construct a polyhedral surface $X_n$ that is close to $X$ in the Gromov--Hausdorff sense and satisfies \eqref{ineq:polyhedral_approx} we will replace each triangular region $T$ with a an appropriate polyhedral surface $S$ that has small diameter and most importantly satisfies a bound of the form $\mathcal H^2(S)\leq K\mathcal H^2(T)$.

To achieve that bound, it is first shown that each \textit{metric triangle} $\Delta=\partial T$ (i.e., a metric space homeomorphic to a circle that is equal to the union of three non-overlapping geodesics) admits a $4$-bi-Lipschitz embedding $F$ into the plane \cite{NtalampekosRomney:length}*{Proposition 1.2}. Note that in  general we cannot embed $T$ into the plane via a bi-Lipschitz map, but we are only embedding its boundary! The embedding is given via an explicit formula involving the Gromov product. A sharp bi-Lipschitz distortion bound for this embedding is established in \cite{LuoRomneyTao:triangles}. 

Next, consider the region $\Omega \subset \R^2$ bounded by the bi-Lipschitz embedding of $\Delta=\partial T$; see Figure \ref{figure:triangulation}. A consequence of the Besicovitch inequality for metric spaces (see \cite{Petrunin:lectures_metric_geometry}*{Section 13.D}) implies that $\mathcal H^2(\Omega) \leq C \mathcal H^2(T)$ for a universal $C>0$. See also \cite{NtalampekosRomney:length}*{Theorem 2.1} for an elementary argument. In other words, upon embedding the boundary of $T$ into the plane, one can control the area of the region bounded by the embedded curve $\partial \Omega$ in terms of the area of $T$. 

Finally, we construct a polyhedral surface $S$ by considering a suitable polygonal approximation of $\Omega$ that is scaled appropriately. A feature of the construction is that the area of $S$ is comparable to that of $\Omega$. Then $S$ is attached to $\partial T$ via the embedding $F$ and replaces $T$. By construction we have the desired bound $\mathcal H^2(S)\leq K\mathcal H^2(T)$.
\end{proof}

The approximation result of Theorem \ref{thm:extended_polyhedral_approximation} is generalized to metric $n$-manifolds $X$ with finite Hausdorff $n$-measure by Marti and Soultanis \cite{MartiSoultanis:metric_fundamental_class}, under the assumption that there exists a \textit{metric fundamental class}. This assumption holds if $X$ is linearly locally contractible and has finite \textit{Nagata dimension} \cite{BassoMartiWenger:structures}.

\subsection{Analytic definition of quasiconformality}

The next theorem of Williams \cite{Williams:qc}*{Theorem\ 1.1 and Corollary\ 3.9} relates the geometric definitions of quasiconformality involving modulus with the analytic definition that relies on upper gradients. The final part of the statement is proved in \cite{NtalampekosRomney:length}*{Theorem 7.1}.

\begin{theorem}[Definitions of quasiconformality]\label{theorem:definitions_qc}
Let $X,Y$ be metric surfaces of locally finite Hausdorff $2$-measure, $h\colon X\to Y$ be a continuous map, and $K>0$. The following are equivalent.
\begin{enumerate}[label=\normalfont(\roman*)]
    \item\label{def:i} $h\in N^{1,2}_{\loc}(X,Y)$ and there exists a $2$-weak upper gradient $g$ of $h$ such that for every Borel set $E\subset Y$ we have
    $$\int_{h^{-1}(E)} g^2\, d\mathcal H^2 \leq K \mathcal H^2(E).$$
    \item\label{def:i'}Each point of $X$ has a neighborhood $U$ such that $h|_U\in N^{1,2}(U,Y)$ and there exists a $2$-weak upper gradient $g_U$ of $h|_U$ such that for every Borel set $E\subset Y$ we have
    $$\int_{(h|_{U})^{-1}(E)} g_U^2\, d\mathcal H^2 \leq K \mathcal H^2(E).$$
    \item\label{def:ii} For every curve family $\Gamma$ in $X$ we have
    $$\Mod_2 \Gamma \leq K\Mod_2 h(\Gamma).$$
\end{enumerate}
If, in addition, $h$ is monotone, then the above are equivalent to the following condition.
	\begin{enumerate}[label=\normalfont(\roman*)]\setcounter{enumi}{3}
	\item\label{def:iv} The set function $\nu(E)=\mathcal H^2(h(E))$ is an outer regular, locally finite Borel measure on $X$. Moreover, if $J_h$ is the Radon--Nikodym derivative of $\nu$ with respect to $\mathcal H^2$, then for $\mathcal H^2$-a.e.\ $x\in X$ we have
	$$g_h(x)^2\leq KJ_h(x).$$
	\end{enumerate}
\end{theorem}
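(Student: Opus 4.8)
The plan is to establish the cycle $\ref{def:i}\Rightarrow\ref{def:ii}\Rightarrow\ref{def:i}$ together with the essentially harmless equivalence $\ref{def:i}\Leftrightarrow\ref{def:i'}$, and then, under the additional monotonicity hypothesis, the equivalence $\ref{def:i}\Leftrightarrow\ref{def:iv}$. The implication $\ref{def:i}\Rightarrow\ref{def:ii}$ is the soft direction and I would treat it first; the reverse implication $\ref{def:ii}\Rightarrow\ref{def:i}$, recovering Sobolev regularity and the pushforward area bound from the modulus comparison alone, is where the real work lies and is the content of Williams' theorem. The equivalence with \ref{def:iv} rests on an area formula for monotone Sobolev maps between surfaces.

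For $\ref{def:i}\Rightarrow\ref{def:ii}$: fix a curve family $\Gamma$ in $X$ and a Borel function $\rho\colon Y\to[0,\infty]$ admissible for $h(\Gamma)$, and set $\widetilde\rho=(\rho\circ h)\,g$, where $g$ is the $2$-weak upper gradient from \ref{def:i}. Since $g$ controls the metric speed of $h$ along curves, one checks that $\int_{h\circ\gamma}\rho\,ds\le\int_\gamma(\rho\circ h)\,g\,ds$ for every rectifiable $\gamma$ outside a curve family of $2$-modulus zero, the exceptional family arising from the definition of a weak upper gradient together with Fuglede's lemma applied to the $L^2$ function $\widetilde\rho$. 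Hence $\widetilde\rho$ is admissible for $\Gamma$ modulo a null family, and so
$$\Mod_2\Gamma\le\int_X\widetilde\rho^{\,2}\,d\mathcal H^2=\int_X(\rho\circ h)^2g^2\,d\mathcal H^2=\int_Y\rho^2\,d\big(h_*(g^2\mathcal H^2)\big)\le K\int_Y\rho^2\,d\mathcal H^2,$$
where the last step is precisely the hypothesis of \ref{def:i}, namely that the pushforward measure $h_*(g^2\mathcal H^2)$ has Radon--Nikodym density at most $K$ with respect to $\mathcal H^2$. Taking the infimum over admissible $\rho$ yields \ref{def:ii}. For $\ref{def:i}\Leftrightarrow\ref{def:i'}$: one direction is immediate, and for the converse I would cover $X$ by countably many neighborhoods $U_j$ as in \ref{def:i'}, use that the minimal $2$-weak upper gradients agree $\mathcal H^2$-a.e.\ on overlaps to glue them into a global $g\in L^2_{\loc}(X)$, and upgrade the local pushforward bounds to the global one by a locally finite covering argument.

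The crux is $\ref{def:ii}\Rightarrow\ref{def:i}$. First one must show $h\in N^{1,2}_{\loc}(X,Y)$: applying the modulus inequality to families of curve segments inside small bi-Lipschitz coordinate charts and invoking the characterization of Sobolev maps via $p$-integrable upper gradients, one produces a locally $2$-integrable weak upper gradient; this is the delicate part of Williams' argument and requires a careful Fubini-type analysis on surfaces. Then, writing $g=g_h$ for the minimal $2$-weak upper gradient, one verifies $h_*(g^2\mathcal H^2)\le K\mathcal H^2$ by a duality argument: if this bound failed on some set, one could build, using the extremal properties of conformal modulus, a curve family in $X$ whose $2$-modulus exceeds $K\Mod_2$ of its image, contradicting \ref{def:ii}. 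I expect this step to be the main obstacle, both because of the local Sobolev regularity and because the measure bound uses the fine structure of modulus; here I would follow Williams' proof in detail rather than reconstruct it.

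Finally, assume $h$ monotone and prove $\ref{def:i}\Leftrightarrow\ref{def:iv}$. The key input is an area formula for monotone maps in $N^{1,2}(X,Y)$: since preimages of points are continua, the multiplicity function $N(h,\cdot)$ equals $1$ at $\mathcal H^2$-a.e.\ point of $h(X)$, whence $\mathcal H^2(h(A))=\int_A J_h\,d\mathcal H^2$ for every Borel $A\subset X$, with $J_h$ the Radon--Nikodym derivative of $\nu(E)=\mathcal H^2(h(E))$; in particular $\nu$ is a locally finite outer regular Borel measure. Granting this, $\ref{def:i}\Rightarrow\ref{def:iv}$ follows by taking $E=h(A)$ in the hypothesis of \ref{def:i}: $\int_A g^2\,d\mathcal H^2\le\int_{h^{-1}(h(A))}g^2\,d\mathcal H^2\le K\mathcal H^2(h(A))=K\int_A J_h\,d\mathcal H^2$, and since $A$ is an arbitrary Borel set, $g_h^2\le g^2\le KJ_h$ a.e. Conversely, $\ref{def:iv}\Rightarrow\ref{def:i}$ because $\int_{h^{-1}(E)}g_h^2\,d\mathcal H^2\le K\int_{h^{-1}(E)}J_h\,d\mathcal H^2=K\,\nu(h^{-1}(E))=K\,\mathcal H^2\big(h(h^{-1}(E))\big)\le K\mathcal H^2(E)$. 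Establishing the multiplicity-one area formula in this low-regularity monotone setting is the second genuine difficulty, which I would carry out following Ntalampekos and Romney.
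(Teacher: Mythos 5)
The paper does not prove this theorem; it quotes it, attributing the equivalence of \ref{def:i}--\ref{def:ii} to Williams \cite{Williams:qc} and the monotone case \ref{def:iv} to \cite{NtalampekosRomney:length}*{Theorem 7.1}, and your outline follows exactly those arguments: the soft direction \ref{def:i}$\Rightarrow$\ref{def:ii} via $\widetilde\rho=(\rho\circ h)g$ is correctly executed, and you rightly identify \ref{def:ii}$\Rightarrow$\ref{def:i} and the multiplicity-one area formula for monotone Sobolev maps as the two genuinely hard inputs, deferring to the correct sources. The only blemish is cosmetic: in \ref{def:i}$\Leftrightarrow$\ref{def:iv} you write $\mathcal H^2(h(A))=\int_A J_h\,d\mathcal H^2$ as an equality, which presumes $\nu$ has no singular part; the argument still goes through since only the inequality $\int_A J_h\,d\mathcal H^2\leq\nu(A)$ and a restriction to the complement of the null carrier of $\nu_s$ are needed.
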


A novel definition of quasiconformality in metric surfaces involving upper and \textit{lower} gradients is given in \cite{MeierRajala:definition}. It follows from \cite{MeierNtalampekos:rigidity}*{Lemma 2.19} that if there exists a weakly $K$-quasi\-conformal map from a Riemannian $2$-manifold $X$ onto a metric surface $Y$, then we necessarily have $K\geq 1$. It is unclear how to prove this for weakly quasiconformal maps between arbitrary surfaces.

\begin{question}
Let $h\colon X\to Y$ be a weakly $K$-quasiconformal map between metric surfaces. Do we necessarily have $K\geq 1$?
\end{question}

\subsection{Measure-theoretic properties of quasiconformal maps}

A consequence of a result of Rajala is the following theorem.

\begin{theorem}[\cite{Rajala:uniformization}*{Remark 8.3}]\label{theorem:rajala_lusin}
Let $f\colon X\to Y$ be a quasiconformal homeomorphism between a Riemannian surface $X$ and a metric surface $Y$ of locally finite Hausdorff $2$-measure. Then $f$ has the Lusin $(N^{-1})$ property. That is, if $E\subset X$ and $\mathcal H^2(E)>0$, then $\mathcal H^2(f(E))>0$. 
\end{theorem}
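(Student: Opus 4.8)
The plan is to reduce the assertion to the statement that the inverse homeomorphism $f^{-1}\colon Y\to X$ has the Lusin $(N)$ property, i.e.\ that it maps $\mathcal H^2$-null sets to $\mathcal H^2$-null sets. This is equivalent to the theorem: if $E\subset X$ satisfies $\mathcal H^2(f(E))=0$, then by Borel regularity of $\mathcal H^2$ on $Y$ there is a Borel set $A\subset Y$ with $f(E)\subset A$ and $\mathcal H^2(A)=0$, so that $f^{-1}(A)$ is a Borel set containing $E$ with $\mathcal H^2(f^{-1}(A))=0$, forcing $\mathcal H^2(E)=0$. First I would record that $f^{-1}$ is itself $K$-quasiconformal and monotone: applying the two-sided modulus inequality of Definition \ref{definition:qc} to the family $f^{-1}(\Gamma')$ for an arbitrary curve family $\Gamma'$ in $Y$ yields $K^{-1}\Mod_2\Gamma'\le \Mod_2 f^{-1}(\Gamma')\le K\Mod_2\Gamma'$, and a homeomorphism is trivially monotone.

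With this in hand, I would apply Theorem \ref{theorem:definitions_qc} to the monotone quasiconformal map $h=f^{-1}\colon Y\to X$. Its condition \ref{def:iv} asserts that the set function $\nu(A)=\mathcal H^2(f^{-1}(A))$, defined on Borel sets $A\subset Y$, is a locally finite Borel measure on $Y$ that admits a Radon--Nikodym density $J_{f^{-1}}$ with respect to $\mathcal H^2$; in other words, $\nu$ is absolutely continuous with respect to $\mathcal H^2$. Absolute continuity of $\nu$ is precisely the Lusin $(N)$ property of $f^{-1}$, and by the reduction of the previous paragraph this completes the argument.

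If one prefers not to extract absolute continuity of $\nu$ directly from condition \ref{def:iv} but only its weaker reading — that $J_{f^{-1}}$ is the density of the absolutely continuous part — then I would instead prove that the Jacobian of $f$ is positive almost everywhere. Applying condition \ref{def:iv} of Theorem \ref{theorem:definitions_qc} to $h=f$ itself gives a locally finite Borel measure $\mu(E)=\mathcal H^2(f(E))$ on $X$ whose absolutely continuous part has density $J_f$ satisfying $g_f^2\le K J_f$ $\mathcal H^2$-a.e., so $J_f\ge K^{-1}g_f^2$; then $\mathcal H^2(f(E))=\mu(E)\ge \int_E J_f\, d\mathcal H^2>0$ as soon as one knows $g_f>0$ $\mathcal H^2$-a.e.\ on $X$. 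To obtain the latter I would invoke the chain rule for upper gradients: $(g_{f^{-1}}\circ f)\cdot g_f$ is a $2$-weak upper gradient of $f^{-1}\circ f=\id_X$, and since $X$ is a Riemannian surface the minimal $2$-weak upper gradient of $\id_X$ is the constant $1$, whence $(g_{f^{-1}}\circ f)\cdot g_f\ge 1$ a.e.\ and in particular $g_f>0$ a.e. The hard part here, and the main obstacle of this route, is making the chain rule rigorous when the intermediate surface $Y$ carries no smooth structure: one needs that both $f$ and $f^{-1}$ are absolutely continuous along $2$-a.e.\ curve with metric speeds dominated by $g_f$ and $g_{f^{-1}}$, and one needs the exceptional family on $Y$ on which $g_{f^{-1}}$ fails the upper gradient inequality to have $f$-preimage of $2$-modulus zero in $X$ — this last point uses the quasiconformality inequality $\Mod_2\Gamma\le K\Mod_2 f(\Gamma)$. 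All of these ingredients are available from the equivalence of the analytic and geometric definitions in Theorem \ref{theorem:definitions_qc} together with the quasiconformality of both $f$ and $f^{-1}$, but must be assembled with care. If, on the other hand, condition \ref{def:iv} is read as asserting $\nu\ll\mathcal H^2$ outright, then the first route renders the theorem essentially immediate.
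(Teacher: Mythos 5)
Your first route does not work, and the obstruction is visible in the survey itself. Condition \ref{def:iv} of Theorem \ref{theorem:definitions_qc} cannot be asserting that $\nu(E)=\mathcal H^2(h(E))$ is absolutely continuous with respect to $\mathcal H^2$: the example recorded immediately after Theorem \ref{theorem:rajala_lusin} (Rajala's Proposition 17.1) produces a quasiconformal homeomorphism $f\colon \widehat\C\to Y$ and a set $E$ with $\mathcal H^2(E)=0$ but $\mathcal H^2(f(E))>0$; this $f$ is monotone and satisfies \ref{def:ii}, hence \ref{def:iv}, yet $E\mapsto\mathcal H^2(f(E))$ is manifestly not absolutely continuous. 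So $J_h$ in \ref{def:iv} must be read as the density of the absolutely continuous part, and the ``essentially immediate'' reading is unavailable. Worse, applied to $h=f^{-1}$ the absolute-continuity reading of \ref{def:iv} is precisely the Lusin $(N)$ property of $f^{-1}$, i.e.\ the statement being proved, so the first route is circular. (For calibration: the survey offers no proof of this theorem, only the citation to \cite{Rajala:uniformization}*{Remark 8.3}, so your proposal is being judged against the correct argument rather than against a proof in the text.)

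Your second route is the right one and is in the spirit of the cited argument: everything hinges on the Sobolev regularity of $f^{-1}$ (available because, as you compute, $f^{-1}$ satisfies $\Mod_2\Gamma'\leq K\Mod_2 f^{-1}(\Gamma')$, so Theorem \ref{theorem:definitions_qc} applies to it) together with the forward inequality $\Mod_2\Gamma\leq K\Mod_2 f(\Gamma)$, which pulls the exceptional curve family for $g_{f^{-1}}$ back to a modulus-zero family in $X$. The chain-rule technicalities you flag are genuine but standard (Fuglede-type arguments plus the modulus inequality, exactly as you describe); one point you should add before invoking minimality of $g_{\id_X}\equiv 1$ is that $(g_{f^{-1}}\circ f)\,g_f$ is locally in $L^2(X)$, which follows from condition \ref{def:i} applied to $f$ with $g=g_f$. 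If you want to avoid the chain rule altogether, there is a shorter variant of the same idea: suppose $G\subset X$ is Borel with $\mathcal H^2(G)>0$ and $\mathcal H^2(f(G))=0$. Since $X$ is Riemannian, a Fubini argument gives $\Mod_2\Gamma_G>0$ for the family $\Gamma_G$ of curves meeting $G$ in a set of positive length. On the other hand, for $2$-a.e.\ curve $\gamma'$ in $Y$ one has $\int_{f^{-1}\circ\gamma'}\chi_G\,ds\leq \int_{\gamma'}\chi_{f(G)}\,g_{f^{-1}}\,ds$, so the functions $n\chi_{f(G)}g_{f^{-1}}$, whose $L^2$-norms vanish because $\mathcal H^2(f(G))=0$, witness $\Mod_2 f(\Gamma_G)=0$; this contradicts $\Mod_2\Gamma_G\leq K\Mod_2 f(\Gamma_G)$.
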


\begin{example}
Quasiconformal maps as in Theorem \ref{theorem:rajala} need not satisfy the Lusin $(N)$ property. Specifically, Rajala \cite{Rajala:uniformization}*{Proposition 17.1} gives an example of a quasiconformal map $f\colon \widehat \C\to Y$, where $Y$ is a metric surface embedded in $\R^3$, such that $\mathcal H^2(f(E))>0$ for some set $E\subset \widehat \C$ with $\mathcal H^2(E)=0$. Moreover, $Y$ can be taken to satisfy the upper mass bound $\mathcal H^2(B(y,r))\leq Cr^2$ for each $y\in Y$ and $0<r<\diam(Y)$.  
\end{example}

Tyson gave a sufficient condition for the Lusin $(N)$ property.

\begin{theorem}[\cite{Tyson:lusin}*{Corollary 5.10}]\label{theorem:tyson_lusin}
Let $f\colon X\to Y$ be a locally quasisymmetric homeomorphism between a Riemannian $n$-manifold $X$ and a metric space $Y$ of locally finite Hausdorff $n$-measure, where $n\geq 2$. Then $f$ has the Lusin $(N)$ property. That is, if $E\subset X$ and $\mathcal H^n(E)=0$, then $\mathcal H^n(f(E))=0$.
\end{theorem}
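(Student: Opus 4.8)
The plan is to reduce to a Euclidean source, reformulate Lusin $(N)$ as the absolute continuity of the ``image-area'' set function, and then exploit quasisymmetric roundness together with the density theory of $\mathcal H^n$ on $Y$.

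\emph{Reduction.} The Lusin $(N)$ property is local in $X$ and is preserved under pre- and post-composition with bi-Lipschitz homeomorphisms, which distort $\mathcal H^n$ by bounded factors and carry locally quasisymmetric homeomorphisms to locally quasisymmetric homeomorphisms. Covering $X$ by countably many charts each bi-Lipschitz onto a Euclidean ball, one may assume $X$ is an open subset of $\R^n$; by $\sigma$-additivity it then suffices to show that $\mathcal H^n(f(E))=0$ whenever $B$ is a ball with $\br B\subset X$ and $E\subset B$ has $\mathcal H^n(E)=0$. Here $f(\br B)$ is compact, so $\mathcal H^n(f(\br B))<\infty$; on the connected compact set $\br B$ the local $\eta$-quasisymmetry upgrades by a chaining argument to genuine $\eta'$-quasisymmetry of $f|_{\br B}$; and $\Mod_n\Gamma\le K\Mod_n f(\Gamma)$ for every curve family $\Gamma$ in $X$ by Theorem \ref{theorem:tyson}.

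\emph{Reformulation and roundness.} The set function $\nu(A)=\mathcal H^n(f(A))$, $A\subset B$ Borel, is a finite Borel measure since $f$ is a homeomorphism, and the goal is exactly $\nu\ll\mathcal H^n$. By the differentiation theorem for Radon measures in $\R^n$, the singular part of $\nu$ is concentrated on the Lebesgue-null set $S=\{x\in B:\overline D\nu(x)=\infty\}$, where $\overline D\nu(x)=\limsup_{r\to 0}\nu(B(x,r))/(\omega_n r^n)$; so it suffices to prove $\mathcal H^n(f(S))=0$. Quasisymmetry enters through \emph{roundness}: for small $r$ one has $B_Y(f(x),t_x(r))\subset f(B(x,r))\subset B_Y(f(x),Ht_x(r))$ with $H=H(\eta')$ and $t_x(r)\asymp D_x(r):=\operatorname{diam}f(B(x,r))$, so in particular $f(B(x,r))\subset B_Y(f(x),D_x(r))$. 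Since $\mathcal H^n$-a.e.\ $y\in Y$ has upper density $\limsup_{\rho\to 0}\mathcal H^n(B_Y(y,\rho))/(\omega_n\rho^n)\le 1$ (as in the remark following Theorem \ref{theorem:area_reciprocal}), it follows that $\overline D\nu(x)\le L_f(x)^n$ for every $x$ with $f(x)\notin Z$ and $L_f(x):=\limsup_{r\to 0}D_x(r)/r<\infty$, where $Z$ is the $\mathcal H^n$-null set of points of $Y$ of infinite upper density. Hence $S\subset f^{-1}(Z)\cup\{L_f=\infty\}$; as $f$ is a homeomorphism, $\nu(f^{-1}(Z))=\mathcal H^n(Z\cap f(B))=0$, and it remains only to show $\mathcal H^n\bigl(f(\{x\in B:L_f(x)=\infty\})\bigr)=0$.

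\emph{The main obstacle.} This last step is the heart of the matter. It cannot be done without $n\ge 2$: for $n=1$ the statement is false, since purely singular quasisymmetric self-homeomorphisms of $\R$ exist, and these violate Lusin $(N)$. It also genuinely needs quasisymmetry and not merely quasiconformality, as quasiconformal maps into metric surfaces of locally finite area can fail $(N)$ (Rajala's example; see the example after Theorem \ref{theorem:rajala_lusin}). I would attack it by a Vitali covering argument run simultaneously in $X$ and $Y$: cover $\{L_f>\lambda\}$ by balls $B(x_i,r_i)$ with $D_{x_i}(r_i)>\lambda r_i$ and the balls $\tfrac15 B(x_i,r_i)$ pairwise disjoint; roundness makes the images $f(B(x_i,r_i))$ pairwise disjoint with each containing $B_Y(f(x_i),c\lambda r_i)$, $c=c(\eta')$, and summing $\mathcal H^n$ over these disjoint subsets of $f(\br B)$ should force $\lambda^n\sum_i\mathcal H^n(B_Y(f(x_i),c\lambda r_i))$ to stay bounded by a multiple of $\mathcal H^n(f(\br B))<\infty$. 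To turn this into $\mathcal H^n(\{L_f=\infty\})=0$ — and, reading the same estimate on the image side, into $\mathcal H^n(f(\{L_f=\infty\}))=0$ — one needs a lower bound $\mathcal H^n(B_Y(f(x_i),c\lambda r_i))\gtrsim(\lambda r_i)^n$ on the relevant balls. Such a bound is unavailable pointwise since $Y$ need not be Ahlfors $n$-regular, and extracting it on sufficiently many scales and balls — by balancing the modulus inequality $\Mod_n\Gamma\le K\Mod_n f(\Gamma)$ against quasisymmetric roundness and the $\mathcal H^n$-density theorem on $Y$ (noting also that the pushforward of Lebesgue measure under $f$ is a doubling Borel measure on $Y$, which makes Lebesgue--Besicovitch differentiation available there) — is, I expect, where the bulk of the work lies.
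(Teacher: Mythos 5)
First, note that the survey itself does not prove this statement: it is quoted verbatim as a result of Tyson, so your proposal has to be judged as a free-standing proof attempt rather than against an in-paper argument.

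Your reductions are sound (localizing to a Euclidean ball, upgrading local quasisymmetry to quasisymmetry on $\br B$ by chaining, reformulating Lusin $(N)$ as $\nu\ll\mathcal H^n$ for $\nu(A)=\mathcal H^n(f(A))$, and using Federer's density theorem on $Y$ to reduce to $\mathcal H^n\bigl(f(\{L_f=\infty\})\bigr)=0$), and your side remarks are correct: the claim genuinely fails for $n=1$ and genuinely needs quasisymmetry rather than mere quasiconformality. But the proof is not complete, and you say so yourself: the step you label ``the main obstacle'' is exactly the content of the theorem, and the Vitali argument you sketch cannot close it as stated. The summation $\sum_i\mathcal H^n(B_Y(f(x_i),c\lambda r_i))\le\mathcal H^n(f(\br B))$ yields nothing, because in a general metric space of locally finite Hausdorff $n$-measure there is no lower bound of the form $\mathcal H^n(B_Y(y,\rho))\gtrsim\rho^n$, nor even $\mathcal H^n(S)\gtrsim\diam(S)^n$ for the disjoint image sets $S=f(B(x_i,r_i))$ (a set of large diameter can have zero $\mathcal H^n$-measure). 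Without Ahlfors regularity of $Y$ this route stalls precisely where you predict, and the auxiliary idea of differentiating against the pushforward of Lebesgue measure does not supply the missing lower bound either.

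The missing ingredient is analytic rather than measure-geometric. The standard route (and, in substance, Tyson's) is: by the modulus inequality of Theorem \ref{theorem:tyson} together with Williams' equivalence (the $n$-dimensional version of Theorem \ref{theorem:definitions_qc}), $f\in N^{1,n}_{\loc}(B,Y)$ with an upper gradient $g$ satisfying $\int_{f^{-1}(E)}g^n\,d\mathcal H^n\le K\mathcal H^n(E)$, so in particular $g\in L^n_{\loc}$; quasisymmetry then yields the oscillation estimate
\begin{equation*}
\diam\bigl(f(B(x,r))\bigr)^n\;\le\;C(n,\eta')\int_{B(x,2r)}g^n\,d\mathcal H^n ,
\end{equation*}
because the image of $B(x,r)$ is ``round'' and its diameter is controlled by the variation of $f$ along a modulus-generic family of curves crossing the annulus $B(x,2r)\setminus B(x,r)$. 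Covering a null set $E$ by balls $B_i$ of small radii whose doubles have bounded overlap and satisfy $|\bigcup_i 2B_i|<\varepsilon$, one gets $\mathcal H^n_\delta(f(E))\lesssim\sum_i\diam(f(B_i))^n\lesssim\int_{\bigcup_i 2B_i}g^n\,d\mathcal H^n\to 0$ by absolute continuity of the integral of $g^n$. This replaces your density/Vitali analysis entirely and is where the hypotheses $n\ge 2$ (needed for the modulus inequality and the Sobolev regularity) and quasisymmetry (needed for the roundness in the oscillation estimate) actually enter. As it stands, your write-up is a correct reduction plus an accurate diagnosis of the difficulty, but not a proof.
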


Combining Theorems \ref{theorem:rajala_lusin} and \ref{theorem:tyson_lusin} (see also \cite{Rajala:uniformization}*{Proposition 17.2}), we obtain the next result.

\begin{theorem}
Let $f\colon X\to Y$ be a quasiconformal and quasisymmetric homeomorphism between a Riemannian surface $X$ and a metric surface $Y$ of locally finite Hausdorff $2$-measure. Then $f$ has the Lusin $(N)$ and $(N^{-1})$ properties. That is, if $E\subset X$, then $\mathcal H^2(E)=0$ if and only if $\mathcal H^2(f(E))=0$. 
\end{theorem}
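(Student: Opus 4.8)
The plan is to combine the two preceding Lusin-type results, checking only that the hypotheses of each are met. First I would observe that a quasisymmetric homeomorphism is, a fortiori, locally quasisymmetric: for any open set $U\subset X$ the restriction $f|_U\colon U\to f(U)$ is $\eta$-quasisymmetric with the same distortion function $\eta$, since the quasisymmetry inequality only involves triples of points lying in $U$. Because $X$ is a Riemannian surface and $Y$ has locally finite Hausdorff $2$-measure, Theorem \ref{theorem:tyson_lusin} applies with $n=2$ and yields that $f$ has the Lusin $(N)$ property; that is, if $E\subset X$ and $\mathcal H^2(E)=0$, then $\mathcal H^2(f(E))=0$.

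For the reverse implication I would invoke Theorem \ref{theorem:rajala_lusin}, whose hypotheses --- $f$ a quasiconformal homeomorphism from a Riemannian surface $X$ onto a metric surface $Y$ of locally finite Hausdorff $2$-measure --- are exactly what is assumed here. That theorem gives the Lusin $(N^{-1})$ property: $\mathcal H^2(E)>0$ implies $\mathcal H^2(f(E))>0$. Passing to contrapositives, $\mathcal H^2(f(E))=0$ implies $\mathcal H^2(E)=0$. Combining the two implications gives the stated equivalence $\mathcal H^2(E)=0$ if and only if $\mathcal H^2(f(E))=0$, valid for every $E\subset X$.

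There is essentially no obstacle beyond verifying hypotheses; the only point deserving a sentence is the passage from global to local quasisymmetry required in order to apply Tyson's theorem, together with the remark that the common assumption of locally finite Hausdorff $2$-measure on $Y$ is what makes both ingredients simultaneously available. Equivalently, one may phrase the conclusion by noting that the conjunction of $(N)$ and $(N^{-1})$ is precisely the assertion that both $f$ and $f^{-1}$ preserve the class of $\mathcal H^2$-null sets.
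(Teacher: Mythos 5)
Your proposal is correct and matches the paper's argument exactly: the paper obtains this theorem by combining Theorem \ref{theorem:rajala_lusin} (for the Lusin $(N^{-1})$ property via quasiconformality) with Theorem \ref{theorem:tyson_lusin} (for the Lusin $(N)$ property via local quasisymmetry), which is precisely what you do. Your remark that a globally quasisymmetric map is locally quasisymmetric with the same distortion function is the right observation to make the hypotheses of Tyson's theorem line up.
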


On the other hand, not every quasisymmetric map is quasiconformal in the setting of metric surfaces, as the next example shows.

\begin{example}\label{example:lusin_qs}
By \cites{Romney:absolute, NtalampekosRomney:absolute}, there exists a quasisymmetric map $f\colon \C\to Y$, where $Y$ has locally finite Hausdorff $2$-measure such that $\mathcal H^2(f(E))=0$ for some set $E\subset \C$ with $\mathcal H^2(E)>0$. That is, $f$ does not have the Lusin $(N^{-1})$ property. In view of Theorem \ref{theorem:rajala_lusin}, $f$ is not quasiconformal. However, by Theorem \ref{theorem:tyson}, $f$ is weakly quasiconformal. We remark that $Y$ cannot be reciprocal by Lemma \ref{lemma:upgrade}. 

Actually, the purpose of the examples in \cites{Romney:absolute, NtalampekosRomney:absolute} was to provide negative answers to the problem of inverse absolute continuity of quasisymmetric and quasiconformal maps. 
\end{example}

A metric space $X$ is \textit{$n$-rectifiable}, where $n\in \N$, if $X$ can be covered up to a set of Hausdorff $n$-measure zero by countably many Lipschitz images of subsets of $\R^n$. Not every metric surface of finite area is $2$-rectifiable. In fact, Rajala \cite{Rajala:uniformization}*{Proposition 17.1} provides an example of a reciprocal surface that is not rectifiable. However, the weakly quasiconformal uniformization theorem (Theorem \ref{theorem:wqc}) has the following consequence.

\begin{theorem}\label{theorem:rectifiable}
Let $X$ be a metric surface of locally finite Hausdorff $2$-measure. Then $X$ has a subset of positive Hausdorff $2$-measure that is $2$-rectifiable.
\end{theorem}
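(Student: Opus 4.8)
The plan is to deduce the statement from the weakly quasiconformal uniformization theorem (Theorem~\ref{theorem:wqc}) together with a Lusin-type Lipschitz approximation for metric-space-valued Sobolev maps. Since both $2$-rectifiability and positivity of $\mathcal H^2$ are local and depend only on the restricted metric, it suffices to find such a subset inside some open set $U\subset X$ homeomorphic to $\mathbb R^2$, equipped with the metric restricted from $X$ (this changes neither the Hausdorff $2$-measure of subsets of $U$ nor their rectifiability). Now $U$ is a simply connected metric surface without boundary of locally finite Hausdorff $2$-measure, so Theorem~\ref{theorem:wqc} provides a Riemannian surface $Z$ (one of $\mathbb C$ or $\mathbb D$) and a continuous, surjective, weakly $K$-quasiconformal map $h\colon Z\to U$. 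By the equivalence of the geometric and analytic definitions of quasiconformality (Theorem~\ref{theorem:definitions_qc}), $h\in N^{1,2}_{\loc}(Z,U)$ and its minimal $2$-weak upper gradient $g_h\in L^2_{\loc}(Z)$ satisfies
\[
\int_{h^{-1}(E)} g_h^2\, d\mathcal H^2 \;\leq\; K\,\mathcal H^2(E) \qquad\text{for every Borel set }E\subset U.
\]

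First I would fix a precompact coordinate ball $B\Subset Z$ that is bi-Lipschitz to a Euclidean disk and satisfies $\int_B g_h^2\, d\mathcal H^2>0$; such a ball exists, for otherwise $g_h=0$ almost everywhere on $Z$, and then, joining small disjoint balls around two points with distinct images by a curve family of positive $2$-modulus, the upper gradient inequality would force those images to coincide, contradicting that $h$ surjects onto the non-degenerate space $U$. On $B$ I would run the standard maximal-function construction: the pointwise Haj\l asz-type inequality $d_U(h(x),h(y))\lesssim |x-y|\,\bigl(\mathcal M g_h(x)+\mathcal M g_h(y)\bigr)$, valid for almost every pair $x,y\in B$ since a Riemannian ball supports a Poincar\'e inequality, shows that $h$ is $C\lambda$-Lipschitz on a $\sigma$-compact set $F_\lambda\subset B$ (a complement of a superlevel set of $\mathcal M g_h$, intersected with a full-measure set on which the estimate holds), while $\mathcal H^2(B\setminus F_\lambda)\leq C\lambda^{-2}\|g_h\|_{L^2(B)}^2\to 0$ as $\lambda\to\infty$. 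Precomposing $h|_{F_\lambda}$ with the bi-Lipschitz chart exhibits $h(F_\lambda)$ as the image of a subset of $\mathbb R^2$ under a Lipschitz map, so $h(F_\lambda)\subset U\subset X$ is $2$-rectifiable.

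It remains to see that $\mathcal H^2(h(F_\lambda))>0$ for $\lambda$ large. As $h$ is continuous and $F_\lambda$ is $\sigma$-compact, $h(F_\lambda)$ is Borel, so from $F_\lambda\subset h^{-1}(h(F_\lambda))$ and the displayed inequality with $E=h(F_\lambda)$ I obtain
\[
\int_{F_\lambda} g_h^2\, d\mathcal H^2 \;\leq\; \int_{h^{-1}(h(F_\lambda))} g_h^2\, d\mathcal H^2 \;\leq\; K\,\mathcal H^2\bigl(h(F_\lambda)\bigr).
\]
Since $\mathcal H^2(B\setminus F_\lambda)\to 0$ and $g_h^2\in L^1(B)$, we have $\int_{F_\lambda} g_h^2\, d\mathcal H^2\to\int_B g_h^2\, d\mathcal H^2>0$, hence $\mathcal H^2(h(F_\lambda))\geq K^{-1}\int_{F_\lambda} g_h^2\, d\mathcal H^2>0$ once $\lambda$ is large. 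Thus $h(F_\lambda)$ is a $2$-rectifiable subset of $X$ of positive Hausdorff $2$-measure, which proves the theorem.

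The step I expect to be the main obstacle is the Lusin--Lipschitz approximation at the borderline exponent $p=n=2$: there is no H\"older continuity to fall back on, so one must carry out the maximal-function and Haj\l asz-gradient argument on the Riemannian ball $B$, which is standard in the theory of metric Sobolev spaces but deserves to be written out. The conceptual point to respect is that weakly quasiconformal maps satisfy neither Lusin condition $(N)$ nor $(N^{-1})$ in general (cf.\ Example~\ref{example:lusin_qs}), so one cannot naively push the rectifiability of $Z$ forward through $h$; the function of the energy inequality above is exactly to ensure that the portion of $Z$ carrying the Sobolev energy of $h$ — nonempty because $h$ is nonconstant — has image of positive area, and on that portion $h$ is Lipschitz off a small exceptional set.
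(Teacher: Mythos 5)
Your proposal is correct and follows essentially the same route as the paper's proof: apply the weakly quasiconformal uniformization theorem, pass to the analytic definition of (weak) quasiconformality, invoke the Lusin-type Lipschitz decomposition of the Sobolev map, and use the energy inequality $\int_{h^{-1}(E)}g_h^2\,d\mathcal H^2\le K\mathcal H^2(E)$ to see that some Lipschitz piece must have image of positive area (the paper phrases this as a contradiction -- if every piece had null image then $g_h=0$ a.e.\ and $h$ would be constant -- while you run the same estimate directly on a piece carrying positive energy). The paper simply cites the Lipschitz decomposition rather than rederiving it via maximal functions.
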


The result was first obtained in the case of locally geodesic surfaces by Meier and Wenger \cite{MeierWenger:uniformization}*{Corollary 1.5}. The conclusion also holds for any compact metric space of topological dimension $n$ and finite Hausdorff $n$-measure with positive \textit{lower density} almost everywhere \cite{DavidLeDonne:rectifiability}. We provide a short proof of Theorem \ref{theorem:rectifiable}, based on Theorem \ref{theorem:wqc}. 

\begin{proof}
Consider a Jordan region $Y\subset X$ of finite Hausdorff $2$-measure and a $(4/\pi)$-weakly quasiconformal map $f\colon \Omega \to Y$ as provided by Theorem \ref{theorem:wqc}, where $\Omega$ is $\C$ or $\D$. By the analytic definition of weak quasiconformality (see Theorem \ref{theorem:definitions_qc}) we have $f\in N^{1,2}(\Omega, Y)$. By \cite{LytchakWenger:discs}*{Proposition 3.2} or \cite{HeinonenKoskelaShanmugalingamTyson:Sobolev}*{Theorem 8.1.49}, there exist measurable sets $A_i\subset \Omega$, $i\in \N$, such that $f|_{A_i}$ is Lipschitz and $|\Omega \setminus \bigcup_{i\in \N}A_i|=0$. It suffices to show that $\mathcal H^2(f(A_i))>0$ for some $i\in \N$. Suppose that this not the case. By the analytic definition of weak quasiconformality, there exists a $2$-weak upper gradient $g$ of $f$ such that
$$\int_{f^{-1}(E)} g^2\, d\mathcal H^2 \leq \frac{4}{\pi} \mathcal H^2(E)$$
for each Borel set $E\subset Y$. This implies that $g=0$ a.e.\ in $A_i$ for each $i\in \N$. Therefore, $g=0$ a.e.\ in $\Omega$. The upper gradient inequality  and the continuity of $f$ imply that $f$ is constant on $\Omega$, a contradiction. 
\end{proof}

For manifolds admitting quasisymmetric parametrizations we have the next theorem, which follows from results of Tyson.

\begin{theorem}
Let $X\subset \R^n$, $n\geq 2$, be an open set, $Y$ be a metric space of locally finite Hausdorff $n$-measure, and $f\colon X\to Y$ be a locally quasisymmetric homeomorphism. Then $Y$ is $n$-rectifiable. 
\end{theorem}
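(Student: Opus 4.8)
The plan is to run the argument in the proof of Theorem~\ref{theorem:rectifiable} in arbitrary dimension, with the quasisymmetry hypothesis and Tyson's theorems playing the role that Theorem~\ref{theorem:wqc} played there. Since $Y$ is homeomorphic to $X$ it is second countable, and since $f$ is locally quasisymmetric I would cover $X$ by countably many open sets $U_i$ on each of which $f$ is $\eta_i$-quasisymmetric for some distortion function $\eta_i$. As $Y=\bigcup_i f(U_i)$ and a countable union of $n$-rectifiable sets is $n$-rectifiable, it is enough to prove that each $f(U_i)$ is $n$-rectifiable; so I may as well assume $f\colon X\to Y$ is $\eta$-quasisymmetric.

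Next I would produce a countable decomposition of $X$ into pieces on which $f$ is Lipschitz. By Theorem~\ref{theorem:tyson} we have $\Mod_n\Gamma\le K(n,\eta)\Mod_n f(\Gamma)$ for every curve family $\Gamma$ in $X$. Appealing to the $n$-dimensional version of the implication \ref{def:ii}$\Rightarrow$\ref{def:i} of Theorem~\ref{theorem:definitions_qc}, which is due to Williams \cite{Williams:qc}, I obtain $f\in N^{1,n}_{\loc}(X,Y)$ together with an $n$-weak upper gradient $g$ satisfying $\int_{f^{-1}(E)}g^n\,d\mathcal H^n\le K\mathcal H^n(E)$ for every Borel set $E\subset Y$. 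The standard decomposition of a metric-space-valued Sobolev map defined on a Euclidean domain into countably many Lipschitz pieces \cite{HeinonenKoskelaShanmugalingamTyson:Sobolev}*{Theorem 8.1.49} then yields measurable sets $A_1,A_2,\dots\subset X$ with each restriction $f|_{A_j}$ Lipschitz and $\mathcal H^n\bigl(X\setminus\bigcup_j A_j\bigr)=0$.

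Finally I would dispose of the exceptional null set using the Lusin $(N)$ property. An open subset of $\R^n$ is a Riemannian $n$-manifold, so Theorem~\ref{theorem:tyson_lusin} applies to $f$ and gives $\mathcal H^n\bigl(f(X\setminus\bigcup_j A_j)\bigr)=0$. Writing $Y=f(X)=\bigcup_j f(A_j)\cup f\bigl(X\setminus\bigcup_j A_j\bigr)$, where each $f(A_j)$ is a Lipschitz image of a subset of $\R^n$ and the remaining set has vanishing Hausdorff $n$-measure, we conclude that $Y$ is $n$-rectifiable.

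The step I expect to require the most care is the passage from the modulus inequality of Theorem~\ref{theorem:tyson} to genuine $N^{1,n}_{\loc}$-regularity with the above Jacobian bound: in the surface case this equivalence is packaged cleanly in Theorem~\ref{theorem:definitions_qc}, but for general $n$ one must verify that Williams' geometric--analytic equivalence (or the arguments internal to \cite{Tyson:lusin}) applies in the present setting, where the source is a Euclidean domain equipped with Lebesgue measure and the target merely has locally finite Hausdorff $n$-measure. Once that regularity is in hand, the Lipschitz decomposition and the use of the Lusin $(N)$ property are routine.
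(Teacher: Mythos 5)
Your proposal is correct and follows essentially the same route as the paper: Tyson's modulus inequality (Theorem \ref{theorem:tyson}) to get local weak quasiconformality, the Sobolev regularity and countable Lipschitz decomposition from \cite{HeinonenKoskelaShanmugalingamTyson:Sobolev}*{Theorem 8.1.49}, and the Lusin $(N)$ property (Theorem \ref{theorem:tyson_lusin}) to dispose of the exceptional null set. The step you flag as delicate is indeed the right one to worry about, but Williams' equivalence in \cite{Williams:qc} is formulated for general metric measure spaces, so the passage from the modulus inequality to $N^{1,n}_{\loc}$-regularity goes through exactly as you describe.
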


\begin{proof}
By Theorem \ref{theorem:tyson}, $f$ is locally weakly quasiconformal. By  \cite{HeinonenKoskelaShanmugalingamTyson:Sobolev}*{Theorem 8.1.49}, as in the previous proof, $X$ can be decomposed up to a set $E$ of $n$-measure zero into countably many measurable sets in each of which $f$ is Lipschitz. By Theorem \ref{theorem:tyson_lusin}, $f$ has the Lusin $(N)$ property, so $\mathcal H^n(f(E))=0$, completing the proof.
\end{proof}

\bibliography{../../biblio} 

\end{document}